\newcommand{\eps}{\varepsilon}
\newcommand{\R}{\ensuremath{\mathbb{R}}}
\newcommand{\dir}{\ensuremath{\mathsf{d}}}
\newcommand{\norm}[1]{\lVert #1 \rVert}
\newcommand\numberthis{\addtocounter{equation}{1}\tag{\theequation}}
\newcommand{\abs}[1]{| #1 |}
\newcommand{\setdef}[2]{\left\{ #1 \mid #2 \right\}}
\newcommand{\inner}[2]{\langle #1,#2 \rangle}
\newcommand{\E}{\ensuremath{\textsf{E}}}
\newtheorem{theorem}{Theorem}
\newtheorem{lemma}[theorem]{Lemma}
\newtheorem{corollary}[theorem]{Corollary}
\newtheorem{remark}[theorem]{Remark}
\DeclareMathOperator{\CV}{CV}
\renewcommand{\ne}{\mathsf{ne}}
\newcommand{\wtheta}{\widehat{\theta}}
\newcommand{\bX}{\mathbf{X}}
\newcommand{\wne}{\widehat{\ne}}
\newcommand{\sign}{\mathsf{sign}}
\newcommand{\wGamma}{\widehat{\Gamma}}
\newcommand{\Dtheta}{\theta^{\Delta}}
\newcommand{\ww}{\widehat{w}}
\newcommand{\svmin}{\sigma_{\min}}
\newcommand{\svmax}{\sigma_{\max}}
\newcommand{\erf}{\mathsf{erf}}
\newcommand{\lambdao}{\lambda_*}
\newcommand{\lambdacv}{\lambda_{\CV}}
\newcommand{\lambdaio}[1]{\lambda_{#1,*}}
\newcommand{\wv}{\widehat{v}}
\newcommand{\wu}{\widehat{u}}
\newcommand{\bZ}{\mathbf{Z}}
\newcommand{\poly}{\mathsf{poly}}
\newcommand{\bzero}{0}
\title{Inconsistency of cross-validation for structure learning in \\Gaussian graphical models}
\author{Zhao Lyu \and Wai Ming Tai \and Mladen Kolar \and Bryon Aragam}
\date{\emph{University of Chicago}}
\begin{document}

\maketitle

\begin{abstract}
Despite numerous years of research into the merits and trade-offs of various model selection criteria, obtaining robust results that elucidate the behavior of cross-validation remains a challenging endeavor. In this paper, we highlight the inherent limitations of cross-validation when employed to discern the structure of a Gaussian graphical model. We provide finite-sample bounds on the probability that the Lasso estimator for the neighborhood of a node within a Gaussian graphical model, optimized using a prediction oracle, misidentifies the neighborhood. Our results pertain to both undirected and directed acyclic graphs,  encompassing general, sparse covariance structures. To support our theoretical findings, we conduct an empirical investigation of this inconsistency by contrasting our outcomes with other commonly used information criteria through an extensive simulation study. Given that many algorithms designed to learn the structure of graphical models require hyperparameter selection, the precise calibration of this hyperparameter is paramount for accurately estimating the inherent structure. Consequently, our observations shed light on this widely recognized practical challenge.
\end{abstract}

\section{Introduction}
\label{sec:intro}

Parameter tuning, also known as hyperparameter selection or model selection, is an unavoidable aspect of modern machine learning. For predictive tasks such as classification with deep neural networks, cross-validation is the gold standard for evaluating the performance of a model and tuning hyperparameters, and comes with its own set of practical challenges \citep{lei2020cross,wilson2020approximate,bates2023cross}. However, other applications, such as structure learning in graphical models, are not purely predictive in nature, and alternative criteria such as the Bayesian information criterion \citep[BIC,][]{schwarz1978estimating} and the Akaike information criterion \citep[AIC,][]{akaike1974new} are often used instead. Structure learning goes one step beyond prediction.
Due to practical applications in causal inference, fairness, interpretability, and domain generalization, structure learning of undirected \citep{cai2011constrained,friedman2008sparse,meinshausen2006,yuan2007model} and directed graphs \citep{schmidt2007,xiang2013,fu2013,aragam2015ccdr} has received renewed attention. Existing work suggests that predictive criteria, such as cross-validation, are not suitable for learning structure (learning the edge structure or the pattern of non-zero elements of the precision matrix) of a graphical model \citep{meinshausen2006,friedman1996}.
In particular, \citet{meinshausen2006} (Proposition~1) proved that tuning the Lasso hyperparameter via a prediction oracle \emph{provably} returns the wrong structure in the infinite-sample limit. This seminal result for Lasso-based graphical model estimators provides formal justification to the well-known folklore that choosing a model using predictive criteria may lead to undesirable overfitting.
In practice, this leads to the question of which criterion to use for structure learning. A complete picture has yet to emerge despite decades of research studying the tradeoffs of these criteria for different tasks.

In this paper, we study the properties of cross-validation (CV) for structure learning of a graphical model using the Lasso to estimate the neighborhood of each node, extending the results of \citet{meinshausen2006} to more general settings. We show that in a precise sense cross-validation is inherently fallible and provide finite-sample bounds on the probability of structure inconsistency. As a motivating example, we will consider the special case of undirected Gaussian graphical models; however, our main result applies to neighborhood selection in general linear Gaussian models, and hence can be applied to directed acyclic graphs as well (Section~\ref{sec:gm}). Thus, the main message of this paper can be summarized as follows:
\begin{quote}
    \emph{For many structure learning applications, cross-validation is provably inconsistent and alternative criteria should be used instead.}
\end{quote}
While our results are specific to the Lasso, we note that this phenomenon does not appear to be specific to the Lasso: See Section~\ref{sec:related} for a discussion of similar results for subset selection and bridge estimators.

Which alternative criteria should be used? This is an intriguing question, with numerous consistency results in the literature \citep{haughton1988,foygel2010extended,chen2008extended,kim2012consistent}. To probe this question empirically, we conduct an extensive empirical study to compare the performance of different criteria. 
Our results indicate that extended BIC \citep{foygel2010extended} performs well, especially in high-dimensions (see Section~\ref{sec: Experiment}).

While certain information criteria (IC) may be computationally challenging to evaluate---often necessitating the optimization of a nonconvex likelihood---CV is frequently proposed as a default substitute. However, our findings caution against such an approach. Despite the potential complexities in computing IC, resorting to CV as an alternative  will lead to incorrect results, at least as far as structure learning is concerned.

\paragraph{Contributions}
We make the following contributions:
\begin{enumerate}
    \item We provide finite-sample bounds on the probability that a Lasso estimate tuned with a prediction oracle will provably recover the \emph{wrong} neighborhood in a linear Gaussian model (Theorem~\ref{thm:main}, Section~\ref{sec:main});
\item We prove that CV indeed approximates this prediction oracle, which implies inconsistency of CV (Theorem~\ref{thm:cv}, Section~\ref{sec:main});
    \item We apply these results to demonstrate inconsistency in structure learning for both undirected and directed acyclic graphical models (Corollaries~\ref{cor:ug} and ~\ref{cor:dag}, Section~\ref{sec:gm});
    \item We provide an extensive simulation study comparing the virtues and tradeoffs of different parameter tuning strategies and algorithms (Section~\ref{sec: Experiment}). These experiments confirm that the issues with CV are not restricted to the particular setting of our theoretical results and extend to other algorithms and non-Gaussian data. 
\end{enumerate}

\section{Gaussian graphical models and structure learning}
\label{sec:prelim}

We will use the classical undirected Gaussian graphical model as a motivating example, but note that our results apply more generally (see Section~\ref{sec:gm}). This preliminary section is intended to provide background and context for the structure learning problem; formal setup and details of our particular theoretical result can be found in Section~\ref{sec:main}.

Gaussian graphical models (GGMs) are widely used to represent and model statistical relations between variables. GGMs encode conditional independences with undirected graphs, which can also be read from the zero pattern in precision matrices~\citep{lauritzen1996graphical}. GGMs have a wide range of applications in natural language processing~\citep{manning1999foundations}, computer vision~\citep{cross1983markov}, and computational biology~\citep{menendez2010gene, varoquaux2010brain}.
We begin by recalling the definition of a GGM and then discuss the various learning tasks that one might consider in this model.

Let $X = \begin{bmatrix}
X_1,X_2,\dots,X_{p}
\end{bmatrix}^\top\sim\mathcal{N}(\bzero,\Sigma)$ be $p$-dimensional joint Gaussian random vector with $\Sigma\succ0$. The conditional independence relationships between each random variable can be represented by a graph $G=(V,E)$ on $p$ nodes $V=X$ with edge set $E$. In particular, $X_i$ and $X_j$ are conditionally independent given all remaining variables $X_{\backslash\{i,j\}}$ if and only if $\Sigma^{-1}_{ij} = 0 $, which corresponds to a missing edge between $i$ and $j$ \citep[see, e.g.,][for details]{lauritzen1996graphical}. Thus, estimating the zero pattern of $\Sigma^{-1}$ is equivalent to recovering $E$, a problem known as \emph{structure learning}. Through its connection with neighborhood selection (Section~\ref{sec:main}), structure learning generalizes the variable selection and support recovery problems in classical regression models. To avoid complicating the presentation, we will not distinguish between these problems in the discussion.

It is worth comparing the different learning tasks in a GGM. \emph{Prediction} refers to predicting the value of a particular node $X_{i}$ given the values of the remaining nodes and is equivalent to linear regression. \emph{Parameter estimation} refers to learning the precision matrix $\Sigma^{-1}$ in some norm such as $\ell_2$ or Frobenius. Both of these tasks are quite different from structure learning: One can predict $X_{i}$ and/or estimate $\Sigma^{-1}$ while at the same time getting the zero pattern of $\Sigma^{-1}$ completely wrong in any finite sample, and in general this is what happens in practice.

The distinction between prediction/estimation and structure learning is crucial when tuning hyperparameters, since the optimal choice of hyperparameter \emph{depends on} what the learning goal is. This is well-known in the literature: For predictive tasks, CV/AIC are efficient (i.e. for prediction), but for structure learning, BIC is consistent \citep[see, e.g.,][for detailed review of such results]{arlot2010survey}.\footnote{More generally, structure learning is known as \emph{model identification} or \emph{model selection consistency}, i.e., selecting the correct model as opposed to an approximately correct one that obtains fast (e.g., minimax) rates of convergence.} Notably, these results do not imply that CV is in fact \emph{inconsistent} for structure learning, which is a stronger negative result for CV.

\section{Related work}
\label{sec:related}
\vspace{-0.7em}

The related problems of hyperparameter tuning and model selection have been extensively studied in the machine learning and statistics literature, and we invite the reader to consult one of the many monographs on the subject for a detailed overview \citep[][]{grunwald2007minimum,claeskens2008model,arlot2010survey}. 

The study of model selection procedures such as CV, BIC, AIC, etc. dates back several decades ~\citep{ff265a3d-1ec8-3034-9346-ab129dbd9576,akaike1974new,stone1974cross,geisser1975predictive,wahba1975completely,stone1977asymptotic,schwarz1978estimating,efron1983estimating,picard1984cross,herzberg1986note}.
It is well-known that BIC is consistent for structure learning in finite-dimensional models \citep{schwarz1978estimating,haughton1988} as well as high-dimensional models \citep{chen2008extended,foygel2010extended,kim2012consistent}. 
These results are central in the classical theory of structure learning for DAGs \citep{meek1997thesis,chickering2002,chickering2003}. We also mention the recent proposal to tune parameters in DAG models by \cite{biza2020tuning}.
At the same time, BIC can be inconsistent under misspecification \cite{grunwald2006bayesian,grunwald2007minimum,grunwald2017inconsistency}.
On the other hand, AIC is known to select (possibly misspecified) models that are minimax optimal in estimation and/or prediction in a sense that can be made precise \citep[see, e.g.,][and the references therein]{barron1999,massart2007concentration}. 

More relevant to the present work, \citet{li1987asymptotic} and \citet{shao1993linear} studied the properties of CV and generalized CV (GCV). \citet{li1987asymptotic} proved the \emph{loss consistency} of CV, which is not the same as structure (or model selection) consistency, and more closely related to the minimax optimality results for AIC. To the best of our knowledge, the first proof of \emph{structure (in)consistency} of CV appeared in \citet{shao1993linear} for a fixed design regression model using subset selection. Specifically, he showed that while leave-one-out CV (LOO) is inconsistent in selecting the true model,
leave-$k$-out CV is consistent as long as $k/n\to 1$.
Further work along these lines includes \citet{zhang1993model,shao1997asymptotic,yang2007consistency}.
\citet{meinshausen2006}, Proposition~1, established the inconsistency of a prediction oracle for GGMs with a single edge; see also \citet{meinshausen2008}.
More recently, \cite{chetverikov2021cross} recently showed that CV-tuned Lasso is minimax optimal for prediction and estimation, while for structure learning, \cite{su2017false} showed that false discoveries are asymptotically unavoidable. Later, \citet{wang2020which} went beyond the Lasso and studied two-stage bridge estimators, and showed that the Lasso can be improved by two-stage approaches.  
Our results build upon these works in the Lasso setting and develops finite-sample bounds for arbitrary linear Gaussian models, with a focus on implications for graphical model structure learning.

Finally, while our paper is focused on provably \emph{negative} consequences of CV, there is a long line of work proposing alternative tuning parameter selection methods. 
For example, 
tuning-parameter free approaches to structure learning abound \citep{wang2020learning,lederer2015trex,yu2019estimating,belloni2011square,sun2012scaled,chichignoud2016practical,liu2017tiger}.
Where structure learning is not the goal, the virtue of CV for predictive tasks is still a subject of intense study~\citep[see][and the references therein]{wilson2020approximate,lei2020cross,bates2023cross}.

\section{Main results}
\label{sec:main}
\vspace{-0.8em}

In this section, we present our setup and main theoretical result on the inconsistency of CV-tuned Lasso.
\vspace{-1em}
\subsection{Neighborhood Selection}
\vspace{-0.2em}
We first describe the formal setup for our main result. The neighborhood selection problem can be defined for a general linear model and does not require specific reference to a graphical model. In order to apply our main result to different types of graphs, we state our main result first in general, and then in Section~\ref{sec:gm} apply the general result to specific graphical models.

Let $p$ be a positive integer and $[p]:=\{1,2,\dots,p\}$.
Let $\Sigma$ be a $p$-by-$p$ positive definite matrix
and define $\Gamma$ to be the $(p-1)$-by-$(p-1)$ submatrix such that $\Gamma_{i,j} = \Sigma_{i,j}$ for $i,j\in [p-1]$, $v$ to be the $(p-1)$-dimensional vector such that $v_{i} = \Sigma_{i,p}$ for $i\in [p-1]$ and $a=\Sigma_{p,p}$, i.e.
\begin{align*}
    \Sigma
    =
    \begin{bmatrix}
    \Gamma & v \\
    v^\top & a
    \end{bmatrix}.\numberthis \label{eq:sigma_def}
\end{align*}
Let $X = \begin{bmatrix}
X_1 , X_2 , \dots , X_{p}
\end{bmatrix}^\top\sim\mathcal{N}(\bzero,\Sigma)$.
The neighborhood selection problem seeks to learn the dependence of a target node in $X$ on the rest of the observed variables. 
Without loss of generality, let the target node be $X_p$.
Define 
\vspace{-0.5em}
\begin{align}
\label{eq:nbhd:param}
    \begin{aligned}
        \theta^*
    &:=
    \arg\min_{\theta\in \mathbb{R}^{p-1}} \E_{X\sim\mathcal{N}(\bzero,\Sigma)}(X_p - \sum_{j=1}^{p-1}\theta_j X_j)^2, 
    \\
    \ne^* 
    &:= 
    \setdef{i\in[p-1]}{\theta^*_i\neq 0}. 
    \end{aligned}
\end{align}
It is easy to show that $\theta^* = \Gamma^{-1}v$.
We assume that $\theta^*$ 
has $s$ non-zero entries or equivalently $\abs{\ne^*}=s$. The neighborhood selection problem attempts to recover $\ne^*$ from $n$ i.i.d.~observations of $X$, which we assemble into an $n\times p$ data matrix $\bX$.
For any matrix $\bX\in\mathbb{R}^{n \times p}$ and $\lambda>0$, the Lasso estimate with penalty parameter $\lambda$ can be written as 
\begin{align}
\label{eq:nhbdlasso}
    \wtheta^{\lambda,\bX}
    :=
    \arg\min_{\theta\in\mathbb{R}^{p-1}}\frac{1}{2n}\norm{\bX_p - \sum_{j=1}^{p-1}\theta_j \bX_j}_2^2 + \lambda\norm{\theta}_1,
\end{align}
where $\bX_i$ is the $i$th column of $\bX$.
The neighborhood estimated by the Lasso is defined by the non-zero entries of $\wtheta^{\lambda,\bX}$, i.e.,
\begin{align*}
    \wne^{\lambda,\bX}
    :=
    \setdef{i\in [p-1]}{\wtheta^{\lambda,\bX}_i \neq 0}.
\end{align*}
See \cite{meinshausen2006} for a more detailed review of the neighborhood selection problem.

Standard methods for selecting the penalty parameter $\lambda$ include CV, BIC, and AIC, as discussed above.
Here, we consider an oracle choice of penalty parameter, which reflects the limiting value of CV as $n\to\infty$ (see Theorem~\ref{thm:cv}).
For any matrix $\bX \in \mathbb{R}^{n\times p}$, the \emph{oracle penalty} $\lambdao^\bX$ is defined as
\begin{align}
\label{def:oracle}
    \lambdao^\bX
    :=
    \arg\min_{\lambda>0} \E_{Y\sim \mathcal{N}(\bzero,\Sigma)}(Y_p - \sum_{j=1}^{p-1}\wtheta^{\lambda,\bX}_jY_j)^2.
\end{align}
Here, $Y$ is a \emph{new} sample from $\mathcal{N}(\bzero,\Sigma)$, independent of the training data $\bX$.
This is known as the ``oracle'' penalty, as it involves the unknown data distribution. In practice, this value is approximated by CV (See Theorem~\ref{thm:cv} for a formal statement).
For a fixed set of $n$ samples $\bX$, we shorten our notation to $\wtheta^{\lambda,\bX}, \wne^{\lambda,\bX}, \lambdao^\bX$ to $\wtheta^\lambda, \wne^{\lambda},\lambdao$ respectively if there is no ambiguity.

\subsection{Inconsistency of cross-validation}
We would like to ask: When $p$ and $n$ are large, can we recover $\ne^*$ via the Lasso with the oracle penalty?
Our first main result provides a finite-sample bound on the probability of exact recovery:

\begin{theorem}
\label{thm:main}
Let $\Sigma$ be a $p$-by-$p$ positive definite matrix such that $\abs{\ne^*}=s$ for some positive integer $s$.
Given a sample matrix $\bX\in\mathbb{R}^{n \times p}$ where each row is an i.i.d.~sample drawn from $\mathcal{N}(\bzero,\Sigma)$, we have
\begin{multline*}
    \mathsf{Pr}_{\bX\sim \mathcal{N}(\bzero,\Sigma)^{n}}(\wne^{\lambdao} = \ne^*) \\
    <
    O\bigg(2^s\cdot \bigg( p^{-\Omega(\frac{1}{\kappa(\Sigma)})} + spe^{-\Omega(\frac{n}{s^2\kappa(\Sigma)^6})}\bigg) \bigg),
\end{multline*}
where $\kappa(\Sigma)$ is the condition number of $\Sigma$.
\end{theorem}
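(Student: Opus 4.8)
Let me think about how one would prove this. We want to show that the Lasso with the oracle penalty $\lambda_*$ fails to recover $\ne^*$ with high probability. The key intuition, going back to Meinshausen–Bühlmann, is that the oracle penalty is "too small" — it is tuned for prediction, and a prediction-optimal penalty shrinks too little, letting spurious variables enter the support.

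So here is the plan. The core of the argument is a two-part comparison. First, I want to establish a lower bound on the prediction-optimal penalty's tendency to *overfit*: I need to show that for the oracle value $\lambda_*^\bX$, with high probability over $\bX$, the estimated neighborhood $\wne^{\lambda_*}$ strictly contains $\ne^*$ (or more precisely, differs from it), i.e. some index $i \notin \ne^*$ has $\wtheta^{\lambda_*}_i \neq 0$. The standard way to do this is to show that the prediction error of the Lasso, as a function of $\lambda$, is minimized at a value small enough that the support is strictly larger than $s$. Concretely, I would (i) compute or bound the population prediction risk $R(\lambda) := \E_Y (Y_p - \sum_j \wtheta^{\lambda,\bX}_j Y_j)^2$ as a function of $\lambda$; (ii) show that near the "correct" threshold $\lambda_s$ where the support has size exactly $s$, the derivative of $R$ is still negative — decreasing $\lambda$ slightly reduces bias on the true coefficients faster than it increases variance from the spurious ones — so the minimizer $\lambda_*$ lies below $\lambda_s$; (iii) conclude that at $\lambda_*$ the support has size $> s$, hence cannot equal $\ne^*$. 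The $2^s$ factor in the bound strongly suggests a union bound over the $2^s$ subsets of $\ne^*$ that could be the *active* set of true variables — i.e. one conditions on which subset of the true support is selected and argues within each case — and the $p^{-\Omega(1/\kappa)}$ and $spe^{-\Omega(n/(s^2\kappa^6))}$ terms are, respectively, a "population-level" failure probability (some spurious variable's correlation with the residual exceeds the KKT threshold, an event with polynomially small probability controlled by $\kappa(\Sigma)$) and the sample-complexity cost of transferring the population statement to the empirical Lasso via concentration of $\frac1n\bX^\top\bX$ around $\Sigma$ and of $\frac1n\bX^\top\bX_p$ around $v$.

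The steps, in order, would be: (1) write the KKT/optimality conditions for $\wtheta^{\lambda,\bX}$ and characterize $\wne^{\lambda}$ as the set of coordinates where the subgradient is saturated; (2) analyze the population version — replace $\frac1n\bX^\top\bX$ by $\Sigma$ — and show that for the population-optimal prediction penalty, a spurious variable enters; the quantitative content here is a lower bound on the probability (over the randomness in $\bX$ that still enters through the residual correlations, even in the population-covariance idealization) that $|\Sigma_{\text{spurious}, \text{resid}}| > \lambda$ at the relevant scale, which gives the $p^{-\Omega(1/\kappa(\Sigma))}$ factor — this looks like a Gaussian anti-concentration / small-ball estimate where the exponent degrades with the condition number; (3) use matrix concentration (e.g. \citep{} results on sample covariance of Gaussians, or simple $\chi^2$ and Gaussian tail bounds) to show $\|\frac1n\bX^\top\bX - \Sigma\|$ and $\|\frac1n\bX^\top\bX_p - v\|$ are small except with probability $spe^{-\Omega(n/(s^2\kappa^6))}$, and check that the Lasso path, the oracle penalty, and the selected support are all suitably stable under such perturbations (this is where the $\kappa^6$ comes from — propagating a perturbation through $\Gamma^{-1}$, the residual variance, and the derivative of $R$ each costs a few powers of $\kappa$); (4) combine via a union bound over the $\binom{s}{\cdot}$-many ($\le 2^s$) sub-cases and over the $\le p$ candidate spurious variables.

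The main obstacle, I expect, is step (2) — pinning down the population behavior of the *prediction-oracle* penalty precisely enough to guarantee overfitting with a quantifiable probability. It is easy to believe heuristically that prediction-tuning underregularizes, but turning "$\lambda_*$ is smaller than the support-recovery threshold" into a clean inequality requires understanding the shape of $R(\lambda)$ near its minimum, which in turn requires controlling the Lasso solution path and its active set as $\lambda$ varies — the path is piecewise linear but the breakpoints depend intricately on $\Sigma$. I would likely handle this by not trying to locate $\lambda_*$ exactly, but instead by a contradiction/comparison argument: exhibit an explicit $\lambda' < \lambda_s$ with strictly smaller population risk than *any* $\lambda \ge \lambda_s$, which forces $\lambda_* < \lambda_s$ and hence $|\wne^{\lambda_*}| > s$; the gap in risk between $\lambda'$ and the threshold is where the condition-number-dependent constants enter. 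A secondary technical annoyance will be ensuring all the concentration events compose correctly so that the oracle penalty computed from the *empirical* design still inherits the population conclusion — i.e. that $\lambda_*^\bX$ doesn't jump across the threshold due to sampling noise — which is precisely the role of the $s^2\kappa^6$ in the exponent.
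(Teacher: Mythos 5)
Your plan stands or falls on step (2): proving that the population prediction risk $R(\lambda)$ is still strictly decreasing at the threshold $\lambda_s$ where the active set first equals $\ne^*$, so that $\lambdao<\lambda_s$ and the selected support strictly contains $\ne^*$. This cannot be turned into the deterministic comparison you propose (``exhibit an explicit $\lambda'<\lambda_s$ with strictly smaller risk than \emph{any} $\lambda\ge\lambda_s$''): conditional on $\bX$, whether the risk keeps dropping as a spurious coordinate enters depends on the realized empirical correlations between that coordinate and the residual, and on the exceptional event that these correlations are favorably aligned, exact recovery \emph{does} occur --- which is precisely why the theorem asserts a small but nonzero probability rather than zero. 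The entire content of the result is a quantitative bound on the probability of that exceptional alignment, and your sketch contains no mechanism that produces it; in particular your reading of the $p^{-\Omega(1/\kappa(\Sigma))}$ term (``a spurious variable's correlation with the residual exceeds the KKT threshold'') points at the event that favors \emph{non}-recovery, whereas what must be bounded is the complementary, degenerate event on which recovery happens. Likewise the $2^s$ factor in the paper is a union over the $2^s$ sign patterns of the active coefficients, not over subsets of $\ne^*$.

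The paper's actual route avoids locating $\lambdao$ relative to any threshold. It works conditionally on the event $\wne^{\lambdao}=\ne^*$: since $\lambdao$ minimizes the prediction risk, which up to an additive constant equals $(\theta-\theta^*)^\top\Gamma(\theta-\theta^*)$, no Lasso solution for any penalty can lie strictly inside the ellipsoid through $\wtheta^{\lambdao}$ centered at $\theta^*$. Using the KKT conditions, the Lasso solution can be continued locally through $\wtheta^{\lambdao}$ along an explicit line in the active coordinates (or along two rays in the boundary case, occurring for at most one extra coordinate almost surely, where an inactive coordinate also saturates the KKT condition); hence that line must be tangent to the ellipsoid (the rays must not enter it). The probability of this geometric degeneracy is then bounded by sample-covariance concentration together with a Gaussian anti-concentration estimate over the roughly $p$ inactive directions, which is where $p^{-\Omega(1/\kappa(\Sigma))}$ and $spe^{-\Omega(n/(s^2\kappa(\Sigma)^6))}$ come from. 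This conditional tangency argument is the idea missing from your proposal; without it, or an equivalent quantitative substitute, the ``prediction-tuning underregularizes'' heuristic does not yield a finite-sample bound.
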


When the probability in Theorem~\ref{thm:main} is strictly less than one, the prediction-oracle estimate is inconsistent.
In particular, we have the following corollary, which answers the question in the negative in the sublinear sparsity regime: 
\begin{corollary}\label{col:asym_thm}
Assume the setting in Theorem \ref{thm:main}. Let $\delta\in[0,1)$ and $p>1$. 
There exists a universal constant $C>0$ such that if $s\leq C(\frac{1}{\kappa(\Sigma)}\log p-\log\frac{1}{\delta})$, then
\begin{align*}
    \mathsf{Pr}_{\bX\sim \mathcal{N}(\bzero,\Sigma)^{n}}(\wne^{\lambdao} = \ne^*)
    & < \delta
    \quad\text{as $n \to \infty$.}
\end{align*}
\end{corollary}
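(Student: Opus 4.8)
\medskip
\noindent\textbf{Proof plan.}
The plan is to deduce the corollary from Theorem~\ref{thm:main} by sending $n\to\infty$, which erases the exponential-in-$n$ term, and then choosing the universal constant $C$ so that the sparsity hypothesis pushes the one surviving term below $\delta$. Note first that in the statement $\Sigma$ (hence $\kappa:=\kappa(\Sigma)$ and $s$), $p$, and $\delta$ are all held fixed while only $n\to\infty$. Write the bound of Theorem~\ref{thm:main} with its implicit constants exposed: there are universal $C_0,c_0,c_1>0$ with
\[
\mathsf{Pr}_{\bX\sim\mathcal{N}(\bzero,\Sigma)^{n}}(\wne^{\lambdao}=\ne^*)\;<\;C_0\,2^{s}\Big(p^{-c_0/\kappa}+sp\,e^{-c_1 n/(s^{2}\kappa^{6})}\Big).
\]
Since $s,p,\kappa$ are independent of $n$, the term $sp\,e^{-c_1 n/(s^{2}\kappa^{6})}\to 0$, so $\limsup_{n\to\infty}$ of the probability is at most $C_0\,2^{s}p^{-c_0/\kappa}$; in fact the right-hand side above is $<\delta$ for all sufficiently large $n$ as soon as $C_0\,2^{s}p^{-c_0/\kappa}<\delta$. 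It therefore suffices to choose a universal $C>0$ so that the hypothesis $s\le C\big(\tfrac1\kappa\log p-\log\tfrac1\delta\big)$ forces $C_0\,2^{s}p^{-c_0/\kappa}<\delta$.

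For this last, purely deterministic, step, take logarithms: $C_0\,2^{s}p^{-c_0/\kappa}<\delta$ is equivalent to
\[
s\log 2\;<\;\tfrac{c_0}{\kappa}\log p-\log\tfrac1\delta-\log C_0 .
\]
Set $L:=\tfrac1\kappa\log p-\log\tfrac1\delta$, so the hypothesis reads $1\le s\le CL$. Because $s$ is a positive integer, the hypothesis is vacuous unless $L\ge 1/C>0$, so we may assume $L\ge 1/C$; this is the source of the slack we will use. Substituting $\tfrac{c_0}{\kappa}\log p=c_0L+c_0\log\tfrac1\delta$ and $s\le CL$, the desired inequality follows from $(C\log 2-c_0)L<(c_0-1)\log\tfrac1\delta-\log C_0$. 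Choosing $C$ small enough that $C\log 2\le c_0/2$ makes the left side at most $-\tfrac{c_0}{2}L\le-\tfrac{c_0}{2C}$, and a further universal shrinkage of $C$, depending only on $c_0$ and $C_0$, then dominates the right side and gives the strict inequality. Combined with the first paragraph, this yields $\mathsf{Pr}_{\bX\sim\mathcal{N}(\bzero,\Sigma)^{n}}(\wne^{\lambdao}=\ne^*)<\delta$ for all large $n$, which is exactly the assertion ``as $n\to\infty$''.

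There is no genuine conceptual obstacle here: the corollary is a repackaging of Theorem~\ref{thm:main}, and the only step of substance is sending $n\to\infty$ to eliminate the concentration term. The single delicate point is the bookkeeping of the two universal constants $C_0$ (from the $O(\cdot)$) and $c_0$ (from the $\Omega(\cdot)$): one must verify that the slack $L\ge 1/C$, which is forced for free by $s\ge 1$, suffices to absorb both $\log C_0$ and the $\log\tfrac1\delta$ gap between $2^{s}$ and $\delta$, and it is precisely this verification that fixes the admissible value of $C$.
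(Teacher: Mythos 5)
Your route coincides with the paper's: the paper gives no separate argument for Corollary~\ref{col:asym_thm} beyond declaring it an immediate consequence of Theorem~\ref{thm:main}, and the intended derivation is exactly what you carry out --- hold $p,s,\kappa(\Sigma),\delta$ fixed, let $n\to\infty$ so that the term $2^s\, sp\, e^{-\Omega(n/(s^2\kappa^6))}$ vanishes, and then tune $C$ so that the surviving term $2^s p^{-\Omega(1/\kappa)}$ drops below $\delta$. The limiting step and your reduction of the target to the deterministic inequality $(C\log 2-c_0)L<(c_0-1)\log\frac{1}{\delta}-\log C_0$, with $L=\frac{1}{\kappa}\log p-\log\frac{1}{\delta}$, are both correct.

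The gap is in the step you yourself flag as delicate. If the hidden $\Omega$-constant satisfies $c_0<1$, the right-hand side equals $-(1-c_0)\log\frac{1}{\delta}-\log C_0$ and tends to $-\infty$ as $\delta\to 0$, whereas the only slack you extract from the hypothesis is $L\ge 1/C$, which does not grow with $\log\frac{1}{\delta}$: taking $s=1$ and $\frac{1}{\kappa}\log p=\log\frac{1}{\delta}+\frac{1}{C}$, the actual left-hand side is $\log 2-\frac{c_0}{C}$, a constant in $\delta$, so for every universal $C$ the inequality --- and indeed the target bound $C_0 2^s p^{-c_0/\kappa}<\delta$ itself --- fails once $\delta$ is small enough. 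Hence the claim that ``a further universal shrinkage of $C$, depending only on $c_0$ and $C_0$, dominates the right side'' is false in that regime; your verification silently assumes $c_0\ge 1$. The argument closes either under that assumption, or if the hypothesis is read with a separate constant on the $\log\frac{1}{\delta}$ term (e.g.\ $s\le C_1\frac{1}{\kappa}\log p-C_2\log\frac{1}{\delta}$ with $C_1\log 2\le c_0/2$ and $C_2\log 2\ge 1$), in which case your computation goes through verbatim. To be fair, this imprecision is already present in the corollary as printed and is glossed over by the paper's ``immediate consequence''; but since you made the constant bookkeeping the substance of your proof, the final choice of $C$ needs one of these repairs (or an explicit assumption on $c_0$) to be correct.
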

Corollary~\ref{col:asym_thm} states that for sufficiently sparse graphs with $s=O(\log p)$, choosing $\lambda$ via the prediction oracle is \emph{provably inconsistent for structure learning}.
With probability $1-\delta$, neighborhood selection will not recover the correct neighborhood in any non-trivial dimension $p$, even when $p$ is fixed as $n\to\infty$.
Corollary~\ref{col:asym_thm} is an immediate consequence of Theorem~\ref{thm:main}. Furthermore, this is just one possible example of inconsistency for certain choices of $(n,p,s)$; clearly other configurations may lead to inconsistency as well.

In practice, the oracle penalty $\lambdao$ is unknown and is usually estimated by CV.
For any positive integer $K$ that divides $n$ (for simplicity), let $I_1,I_2,\dots,I_K$ be a partition of $[n]$ such that the size of each $I_k$ is $n/K$ for $k\in[K]$.
For any matrix $\bX\in\mathbb{R}^{n \times p}$, the CV penalty $\lambdacv^{\bX}$ is defined as
\begin{align*}
    \lambdacv^{\bX}
    & :=
    \arg\min_{\lambda>0} \frac{1}{K}\sum_{k=1}^K \frac{1}{n/K}\norm{\bX^{I_k}_p - \sum_{j=1}^{p-1}\wtheta^{\lambda,\bX^{-I_k}}_j\bX^{I_k}_j}_2^2
\end{align*}
where $\bX^{I_k}$ (resp. $\bX^{-I_k}$) is the $(n/K)$-by-$p$ submatrix of $\bX$ whose row indices are in $I_k$ (resp. not in $I_k$) for $k\in [K]$.
Although it is common to use $\lambdacv^{\bX}$ to estimate $\lambdao^{\bX}$ in practice, we could not find a formal proof of this approximation in the literature.
Therefore, we also prove the following theorem for completeness.
\begin{theorem}
\label{thm:cv}
Let $\Sigma$ be a $p$-by-$p$ positive definite matrix.
Suppose we are given a sample matrix $\bX\in\mathbb{R}^{n \times p}$ where each row is an i.i.d sample drawn from $\mathcal{N}(\bzero,\Sigma)$.
Then, for every $\delta>0$,
\begin{align*}
    \mathsf{Pr}_{\bX\sim \mathcal{N}(\bzero,\Sigma)^{n}}(|\lambdacv^{\bX} 
    -
    \lambdao^{\bX}|<\delta)\to 1 \quad \text{as $n\to \infty$.}
\end{align*}
\end{theorem}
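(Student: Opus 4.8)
The plan is to show that the CV objective converges uniformly (in $\lambda$) to the oracle objective, and then deduce convergence of the argmin. Fix $\bX$ notation aside and think of everything as functions of $\lambda$. Define the population risk $R(\lambda) := \E_{Y\sim\mathcal N(\bzero,\Sigma)}(Y_p - \sum_j \wtheta^{\lambda,\bX}_j Y_j)^2$ and the CV risk $\widehat R_{\mathrm{CV}}(\lambda)$ as in the statement. First I would restrict attention to a compact interval $\lambda\in[\lambda_{\min},\lambda_{\max}]$: for $\lambda$ above $\lambda_{\max}:=\norm{\bX^\top\bX_p}_\infty/n$ the Lasso solution is identically $0$, so both $R$ and $\widehat R_{\mathrm{CV}}$ are constant there; and one can argue (using positive-definiteness of $\Sigma$ and a standard high-probability lower bound on the smallest eigenvalue of the sample Gram matrix, e.g. via Gaussian concentration of singular values) that the minimizers of both objectives are bounded away from $0$ with probability tending to one, so small $\lambda$ can be excluded. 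On this compact set, $\lambda\mapsto\wtheta^{\lambda,\bX}$ is Lipschitz (a standard stability fact for the Lasso, which follows from strong convexity of the objective once the Gram matrix is nonsingular), hence $R(\lambda)$ and each leave-fold predictor are Lipschitz in $\lambda$ with constants that are bounded with high probability.

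The core estimate is a pointwise law of large numbers upgraded to a uniform one. For each fixed $\lambda$, $\widehat R_{\mathrm{CV}}(\lambda)$ is an average over the $K$ folds of the empirical risk of $\wtheta^{\lambda,\bX^{-I_k}}$ evaluated on the held-out fold $I_k$. The key observations are: (i) $\bX^{-I_k}$ has $n - n/K$ i.i.d.\ rows from $\mathcal N(\bzero,\Sigma)$, so $\wtheta^{\lambda,\bX^{-I_k}}$ converges (uniformly in $\lambda$ on the compact set, by continuity of the Lasso map in the Gram matrix / cross term, and the LLN for those) to the population Lasso coefficient $\wtheta^{\lambda,*}$ defined with $\Sigma$ in place of the sample moments; (ii) conditionally on the training folds, the held-out empirical quadratic form $\frac{K}{n}\norm{\bX^{I_k}_p - \sum_j \wtheta^{\lambda,\bX^{-I_k}}_j \bX^{I_k}_j}_2^2$ concentrates around its conditional expectation, which is exactly the population risk of the plug-in coefficient, by the LLN applied to the $n/K$ held-out samples. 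Combining (i) and (ii) gives $\widehat R_{\mathrm{CV}}(\lambda)\to R_*(\lambda)$ pointwise, where $R_*(\lambda)$ is the population risk of $\wtheta^{\lambda,*}$; but the same LLN applied to $\wtheta^{\lambda,\bX}$ itself gives $R(\lambda)\to R_*(\lambda)$, so $\widehat R_{\mathrm{CV}}(\lambda) - R(\lambda)\to 0$ pointwise. To make this uniform, cover $[\lambda_{\min},\lambda_{\max}]$ by an $\eps$-net, control the net points by a union bound over the (finitely many) net points, and fill in between net points using the Lipschitz bounds from the previous paragraph.

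Finally, uniform convergence of the objectives transfers to the minimizers. Here one needs a mild well-separatedness property: I would argue that $R(\lambda)$ (equivalently, its uniform limit $R_*(\lambda)$) has a unique minimizer $\lambdao$ on the compact set with the property that $R$ grows as one moves away from it — the population risk $R_*$ is a smooth, strictly convex-like function of $\lambda$ on the relevant range because the population Lasso path is piecewise smooth and the risk is strictly decreasing then increasing around its minimum; more carefully, for any $\eta>0$, $\inf_{|\lambda-\lambdao|\ge\eta} R(\lambda) > R(\lambdao) + c(\eta)$ for some $c(\eta)>0$ with high probability. Combined with $\sup_\lambda |\widehat R_{\mathrm{CV}}(\lambda) - R(\lambda)| < c(\eta)/3$ on the good event, the minimizer $\lambdacv$ of $\widehat R_{\mathrm{CV}}$ must lie within $\eta$ of $\lambdao$, which is the claim. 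The main obstacle I anticipate is the well-separatedness / uniqueness of the oracle minimizer: the Lasso path can in principle have flat stretches in the prediction risk (regions where the support and hence the risk change slowly), so justifying a quantitative growth condition $c(\eta)>0$ around $\lambdao$ requires care — possibly one proves it only for generic $\Sigma$, or phrases the conclusion as ``every CV minimizer is close to some oracle minimizer,'' which still suffices for the inconsistency conclusion since the structure is misidentified on a whole neighborhood of $\lambdao$. The Lipschitz-stability of the Lasso map and the smallest-eigenvalue lower bound for the sample Gram matrix are the other technical inputs, but both are standard.
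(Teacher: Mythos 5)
Your route is sound in outline but genuinely different from the paper's. You run a classical M-estimation argument: restrict $\lambda$ to a compact interval, use Lipschitz stability of the Lasso path in $\lambda$, prove uniform (over $\lambda$, via an $\eps$-net) convergence of the CV risk and the oracle risk to a common population limit, and then transfer uniform closeness of objectives to closeness of minimizers through a well-separation condition. The paper avoids uniform-in-$\lambda$ control entirely: it conditions on a single concentration event for the fold-wise second moments ($|\E(Y_iY_j) - \frac{K}{n}\inner{\bX^{I_k}_i}{\bX^{I_k}_j}|<\eps'$ for all $i,j,k$), and then chains inequalities only at the two random penalties $\lambdacv$ and $\lambdao$, repeatedly swapping training sets inside the empirical risks by invoking the defining minimizer property of the Lasso (the basic inequality, with the $\lambda\norm{\cdot}_1$ terms carried along and controlled by separate lemmas bounding $\norm{\wtheta^{\lambda,\cdot}}_1$, its fluctuation across data sets, and the relevant range of $\lambda$) together with the definition of $\lambdacv$ as the minimizer of the averaged fold risk. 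This yields the quantitative sandwich $Q(\lambdao,\bX)\le Q(\lambdacv,\bX)\le Q(\lambdao,\bX)+\eps_n$ with explicit rates and explicit dependence on $K$, which is what buys the paper its remark that the failure probability behaves like $1-O(Ke^{-n^{<1}/K})$; your approach, by contrast, is more standard and would generalize more readily beyond the Lasso, but it needs the extra machinery (path Lipschitzness, eigenvalue lower bounds, covering numbers) that the paper never has to develop.

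The obstacle you flag at the end --- that closeness of objective values only yields closeness of the argmins under a well-separation/uniqueness property of the oracle minimizer, and that the prediction risk along the Lasso path can in principle be flat --- is real, but it is not resolved in the paper either: after establishing $|Q(\lambdacv,\bX)-Q(\lambdao,\bX)|<n^{-1/20}$, the paper concludes with the single sentence that ``$Q$ is a continuous function,'' which is exactly the step you identify as requiring a growth condition. So this is a shared weakness of both arguments rather than a defect specific to yours; your proposal is in fact more candid about it, and your fallback formulation (every CV minimizer is close to some oracle minimizer) is the honest way to state what either argument actually delivers, and it suffices for the downstream inconsistency conclusions.
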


By combining Theorems~\ref{thm:main} and~\ref{thm:cv} with the known properties of the solution path for the Lasso \citep{efron2004least}, 
it is not hard to show that the CV-tuned neighborhoods are inconsistent, i.e., 
\begin{align*}
    \mathsf{Pr}_{\bX\sim \mathcal{N}(\bzero,\Sigma)^{n}}(\wne^{\lambdacv^\bX,\bX} = \ne^*)<\delta\quad\text{as $n \to \infty$.}
\end{align*}
The number of folds $K$ affects $\lambdacv^{\bX}$ and further affects $\wne^{\lambdacv^\bX,\bX}$. Briefly, if we only consider the dependence on $n$ and $K$, the probability is roughly $1-O(Ke^{-n^{< 1}/K})$. Therefore, if $K=o(n)$ we have the probability $\to 1$ as $n\to \infty$. Details are postponed to proofs of Theorem \ref{thm:cv}.

\section{Application to graphical models}
\label{sec:gm}

As stated above, our main result applies to neighborhood selection in a general linear Gaussian model with $X\sim\mathcal{N}(\bzero,\Sigma)$. In this section, we apply this result to two important special cases: Undirected Gaussian graphical models and Gaussian DAG models. 

\subsection{Undirected graphs}
\label{sec:ug}

A popular approach to learning Gaussian graphical models is to directly apply neighborhood selection node-by-node, and use the neighborhood of each node to define a $p\times p$ graph \citep{meinshausen2006}. 
Let $\omega_{j}\in\R^{p}$ be the coefficient vector for the $j$th nodewise neighborhood regression problem, where $\omega_{j}=\begin{bmatrix}\omega_{1j},\ldots,\omega_{pj}\end{bmatrix}^\top$ for each $j$.
Formally, $\omega_{j}$ solves \eqref{eq:nbhd:param} with $p$ replaced by $j$ (i.e. the target node is $j$), and we add a zero in the $j$th position.
This defines a matrix $\Omega=[\omega_{1}\,|\,\cdots\,|\,\omega_{p}]=\begin{bmatrix}\omega_{ij}\end{bmatrix}\in\mathbb{R}^{p\times p}$. The zero pattern of this matrix defines an undirected graph $G=(V,E)$, and is the same as the zero pattern of $\Sigma^{-1}$.
We estimate $\Omega$ by $\widehat{\Omega}(\lambda)=[\widehat{\omega}_{1}(\lambda)\,|\,\cdots\,|\,\widehat{\omega}_{p}(\lambda)]$, where $\widehat{\Omega}(\lambda)$ is the solution to the following optimization problem:
\begin{align}
\label{eq:pseudo}    \min_{\substack{\omega_{1},\ldots,\omega_{p}\\\omega_{j}\in\mathbb{R}^{p-1}}}\frac{1}{2n}\sum_{j=1}^{p}\Big\{\norm{\bX_j - \sum_{i\neq  j}\omega_{ij} \bX_i}_2^2 + \lambda\norm{\omega_{j}}_1\Big\}.
\end{align}
To estimate the structure $G$, we let $\widehat{G}(\lambda)$ be the undirected graph whose edges correspond to the nonzero entries in the solution $\widehat{\Omega}(\lambda)$ (see also Remark~\ref{rem:symmetric}).
It is easy to see that \eqref{eq:pseudo} is equivalent to solving $p$ nodewise regression problems \eqref{eq:nhbdlasso}. 
This is also known as the \emph{pseudo-likelihood} approach, since the objective is not a true (joint) likelihood. Nonetheless, it is well-known to provide a consistent estimate of the structure of $G$ for certain choices of $\lambda$. 
Finally, let $\widehat{G}_{\CV}=\widehat{G}(\lambda_{\CV})$ be the estimate when CV is used to tune $\lambda$.

The following corollary is immediate from Theorems~\ref{thm:main} and~\ref{thm:cv}:
\begin{corollary}
\label{cor:ug}
    Suppose $\bX\in\mathbb{R}^{n \times p}$ is a sample matrix where each row is an i.i.d.~sample drawn from $\mathcal{N}(\bzero,\Sigma)$ and let $G$ be the undirected Gaussian graphical model associated with $\Sigma^{-1}$. Then, for any $\delta>0$ satisfying the conditions in Corollary~\ref{col:asym_thm},
    \begin{multline*}
    \mathsf{Pr}_{\bX\sim \mathcal{N}(\bzero,\Sigma)^{n}}(\widehat{G}_{\CV}\neq G) <\delta
    \quad\text{as $n\to\infty$.}
    \end{multline*}
\end{corollary}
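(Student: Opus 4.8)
\emph{Proof sketch.} The plan is to reduce exact recovery of $G$ to exact neighborhood recovery at a single node, and then quote Theorems~\ref{thm:main} and~\ref{thm:cv}. Since those theorems can only control the probability that CV reproduces an \emph{exact} neighborhood, the quantity one establishes is $\mathsf{Pr}(\widehat G_{\CV}=G)<\delta$ as $n\to\infty$; i.e.\ exact structure recovery fails with probability tending to $1$, which is the inconsistency Corollary~\ref{cor:ug} records.

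\emph{The reduction.} Problem~\eqref{eq:pseudo} separates across columns: for each node $j$, the $j$-th column of $\widehat\Omega(\lambda)$ (with a zero inserted in position $j$) is exactly the nodewise Lasso fit~\eqref{eq:nhbdlasso} with target $j$ at a penalty proportional to $\lambda$. Hence the zero pattern of $\widehat\Omega(\lambda)$ agrees with that of $\Sigma^{-1}$ only if, for every node $j$, the estimated neighborhood equals the true one. Specializing to $j=p$ and writing $\wne^{\lambdacv}$ for the corresponding CV-tuned neighborhood, $\{\widehat G_{\CV}=G\}\subseteq\{\wne^{\lambdacv}=\ne^*\}$, so it suffices to show $\mathsf{Pr}(\wne^{\lambdacv}=\ne^*)<\delta$ as $n\to\infty$. (Under the OR/AND symmetrizations of Remark~\ref{rem:symmetric} this inclusion is not literal, but $\widehat G_{\CV}=G$ still forbids any false edge at $p$, and the proof of Theorem~\ref{thm:main} shows the prediction oracle overselects at $p$ with probability $\to1$, so the conclusion is unaffected; I use the symmetric reading.)

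\emph{The single-node bound.} By Theorem~\ref{thm:cv}, $|\lambdacv-\lambdao|\to0$ in probability, where $\lambdao$ is the node-$p$ prediction oracle~\eqref{def:oracle}. Because $\lambda\mapsto\wtheta^{\lambda}$ is piecewise linear with finitely many support-change knots~\citep{efron2004least} and $\lambdao$ is generically not a knot, $\{|\lambdacv-\lambdao|<\eta\}$ forces $\wne^{\lambdacv}=\wne^{\lambdao}$ outside an $o(1)$-probability ``knife-edge'' event, so $\mathsf{Pr}(\wne^{\lambdacv}=\ne^*)=\mathsf{Pr}(\wne^{\lambdao}=\ne^*)+o(1)$. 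Corollary~\ref{col:asym_thm}, applied with $s=|\ne^*|$ (the degree of $p$) under the stated condition on $\delta$, gives $\mathsf{Pr}(\wne^{\lambdao}=\ne^*)<\delta$ as $n\to\infty$. Chaining, $\mathsf{Pr}(\widehat G_{\CV}=G)\le\mathsf{Pr}(\wne^{\lambdacv}=\ne^*)<\delta$ as $n\to\infty$, establishing Corollary~\ref{cor:ug}.

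\emph{Main obstacle.} The delicate point is the single shared $\lambda$ in~\eqref{eq:pseudo}. If each nodewise regression is CV-tuned separately, Theorems~\ref{thm:main}--\ref{thm:cv} apply verbatim to node $p$ and the argument is complete. With one shared $\lambda$, however, $K$-fold CV on the pooled objective minimizes the \emph{sum} of the $p$ out-of-fold nodewise risks, whose population limit minimizes $\sum_j\E_Y(Y_j-\sum_{i\ne j}\widehat\omega_{ij}(\lambda)Y_i)^2$ and need not coincide with any individual $\lambdao$. Closing the gap requires one of: (i) restricting to $\Sigma$ whose symmetry forces the pooled oracle to equal the node-$p$ oracle (e.g.\ exchangeable covariances); (ii) adopting per-node CV; or (iii) strengthening Theorem~\ref{thm:main} to show node $p$'s neighborhood is misidentified throughout an interval of penalties around $\lambdao$, so that any prediction-calibrated choice---pooled or not---lands in the bad range. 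A secondary, routine point is quantifying the knife-edge event and, as in the remark after Theorem~\ref{thm:cv}, taking $K=o(n)$ so the concentration extends to the pooled setting and the failure probability still tends to $1$.
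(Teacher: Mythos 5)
Your argument is essentially the paper's: the paper offers no separate proof of Corollary~\ref{cor:ug} beyond declaring it immediate from Theorems~\ref{thm:main} and~\ref{thm:cv}, i.e.\ exactly your chain --- $|\lambdacv-\lambdao|\to 0$ plus the piecewise-linear Lasso path \citep{efron2004least} to transfer Corollary~\ref{col:asym_thm} from $\lambdao$ to $\lambdacv$, and then the observation that $\widehat G_{\CV}=G$ forces the node-$p$ support to equal $\ne^*$ (your reading that the displayed ``$\neq$'' is intended as ``$=$'', so that the claim is $\mathsf{Pr}(\widehat G_{\CV}= G)<\delta$, matches the paper's intent, since otherwise the corollary would assert consistency). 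Your ``main obstacle'' about a single shared $\lambda$ in \eqref{eq:pseudo} is a fair observation the paper glosses over, but it dissolves under the paper's own definitions: $\lambdacv$ is defined in Section~4 for a single target node, so the intended reading is your option (ii) of per-node CV tuning, under which your reduction and single-node bound already complete the proof; the pooled-$\lambda$ variant is simply not what Theorems~\ref{thm:main}--\ref{thm:cv} (or the corollary, as the paper uses it) address.
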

Thus, CV is inconsistent for learning the structured of an undirected Gaussian graphical model.

\begin{remark}
    \label{rem:symmetric}
    Since $\Omega$ and $\widehat{\Omega}$ essentially capture partial regression coefficients, these matrices are not symmetric in general. Nonetheless, the support of $\Omega$ is always symmetric (see e.g.~Sec~5.1.3 in \citealp{lauritzen1996graphical}), but $\widehat{\Omega}$ may not have a symmetric support on finite samples. Asymptotically, this does change anything, but on finite-samples we need to use either the AND or the OR rule to symmetrize $\widehat{\Omega}$, as discussed in \citet{meinshausen2006}.
\end{remark}

\subsection{Directed acyclic graphs}
\label{sec:dag}

Our results also apply to DAG models, which are popular for modeling causal relationships in ML.
First, recall the general linear structural equation model (SEM):
\begin{align}
\label{eq:sem}
\begin{aligned}
        X_j
    &= \sum_{i=1}^{p}\beta_{ij}X_{i} + \eps_{j},
    \quad
    \eps_{j}\sim\mathcal{N}(0,\sigma_{j}^{2}),
    \\
    E &= \{(i,j) : \beta_{ij}\neq 0\}.
\end{aligned}
\end{align}
We collect the SEM coefficients $\beta_{ij}$ into a $p\times p$ matrix $B=[\beta_{1}\,|\,\cdots\,|\,\beta_{p}]=(\beta_{ij})\in\mathbb{R}^{p\times p}$, with the same indexing conventions as $\Omega$ in Section~\ref{sec:ug}. 
This defines a graph $G = (V,E)$ that be read off from the nonzero entries in $B$.
When $G$ is a DAG, \eqref{eq:sem} defines a Gaussian DAG model. We assume throughout that $G$ is acyclic.
 
A common procedure to learn a DAG is to first learn a topological ordering of $G$, and then regress each node onto its predecessors in this ordering \citep[e.g.][]{shojaie2010,ghoshal2017ident,ghoshal2017sem,chen2018causal,park2020identifiability}. 
More precisely, given an ordering $\prec$ on the variables $X_{i}$, we define an SEM \eqref{eq:sem} by regressing each $X_{j}$ onto the set $A_{j}=\{X_{i} : X_{i}\prec X_{j}\}$. The set of nonzero coefficients $\{i : \beta_{ij}\neq0\}$ defines the parents of $X_{j}$ in the ordering $\prec$.

Following this literature, let $\widehat{G}(\prec,\lambda)$ denote the estimate of $G$ that results from using the order $\prec$ and $\ell_{1}$-regularized least squares with $\lambda>0$ to estimate each parent set from the candidate set $A_{j}$:
\begin{align}
\min_{\substack{\beta_{1},\ldots,\beta_{p}\\\beta_{j}\in\mathbb{R}^{|\!A_{j}\!|}}}\frac{1}{2n}\sum_{j=1}^{p}\Big\{\norm{\bX_j - \sum_{i\in A_{j}}\beta_{ij} \bX_i}_2^2 + \lambda\norm{\beta_{j}}_1\Big\}.
\end{align}
For each $j$, we are solving a neighborhood regression problem similar to \eqref{eq:nhbdlasso}, except instead of regressing the $j$th node onto every other variable, we restrict attention to the candidate variables $A_{j}$ induced by the ordering $\prec$.
Finally, let $\widehat{G}_{\CV}(\prec)=\widehat{G}(\prec,\lambda_{\CV})$ be the resulting graph when CV is used to tune $\lambda$.

The following corollary is also immediate from Theorems~\ref{thm:main} and~\ref{thm:cv}:
\begin{corollary}
\label{cor:dag}
    Suppose we are given $n$ i.i.d. samples from the model \eqref{eq:sem} with DAG $G$, and suppose further that we know the true ordering $\prec$ of $G$. Then, for any $\delta>0$ satisfying the conditions in Corollary~\ref{col:asym_thm},
    \begin{align*}
    \mathsf{Pr}_{\bX\sim \mathcal{N}(\bzero,\Sigma)^{n}}(\widehat{G}_{\CV}(\prec)\neq G) <\delta
    \quad\text{as $n\to\infty$.}
    \end{align*}
\end{corollary}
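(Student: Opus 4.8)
The plan is to reduce Corollary~\ref{cor:dag} to the general neighborhood-selection results of Theorems~\ref{thm:main} and~\ref{thm:cv} by splitting the DAG learning problem, for the known ordering, into $p$ separate nodewise Lasso regressions. \emph{Step 1 (reduction).} Fix the true topological order $\prec$ of $G$ and, for $j\in[p]$, write $A_j=\{i:X_i\prec X_j\}$ for the candidate parent set used in the corollary. Because $\prec$ is a topological order of the DAG $G$, every parent of $j$ precedes $j$, so the true parent set $\mathrm{pa}(j)=\{i:\beta_{ij}\neq0\}$ satisfies $\mathrm{pa}(j)\subseteq A_j$, and \eqref{eq:sem} reads $X_j=\sum_{i\in\mathrm{pa}(j)}\beta_{ij}X_i+\eps_j$ with $\eps_j$ independent of $(X_i)_{i\in A_j}$. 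Hence the population least-squares regression of $X_j$ onto $(X_i)_{i\in A_j}$ has minimizer $\theta^*=(\beta_{ij})_{i\in A_j}$ (zeros off $\mathrm{pa}(j)$) and population support exactly $\mathrm{pa}(j)$; this is precisely the setup of Section~\ref{sec:main}, with $\Sigma$ replaced by the covariance $\Sigma^{(j)}$ of the vector $\big((X_i)_{i\in A_j},X_j\big)$ (target node placed last). Since $\Sigma^{(j)}$ is a principal submatrix of $\Sigma$, Cauchy interlacing gives $\kappa(\Sigma^{(j)})\le\kappa(\Sigma)$, and the sparsity of the sub-problem is $s_j:=|\mathrm{pa}(j)|$. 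Term by term, the $\ell_1$-penalized program in the corollary is the nodewise Lasso \eqref{eq:nhbdlasso} for these $p$ sub-problems, and $\widehat G(\prec,\lambda)$ records their supports.

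\emph{Step 2 (apply the two theorems nodewise).} For each $j$, Theorem~\ref{thm:main} — equivalently Corollary~\ref{col:asym_thm}, read with $|A_j|$ in the role of $p$ and $s_j$ in the role of $s$ — controls the probability that the oracle-tuned $j$th regression identifies $\mathrm{pa}(j)$. Theorem~\ref{thm:cv}, applied to the $j$th regression, gives $|\lambdacv^{(j)}-\lambdao^{(j)}|\to0$ in probability; combining this with the piecewise-linearity of the Lasso solution path in $\lambda$ \citep{efron2004least} — the recovered support is constant on each of the finitely many linear pieces — shows that the support $\wne_j$ computed at $\lambdacv^{(j)}$ coincides, with probability tending to $1$, with the one computed at $\lambdao^{(j)}$. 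This is exactly the argument sketched after Theorem~\ref{thm:cv}, and it uses the scaling $K=o(n)$ noted there.

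\emph{Step 3 (combine over nodes).} The estimate $\widehat G_\CV(\prec)$ is assembled from the $p$ nodewise CV-tuned supports, so the event $\{\widehat G_\CV(\prec)\neq G\}$ is determined by the $p$ nodewise events. Since $p$ is fixed, a union bound over $j\in[p]$ lets one replace each $\lambdacv^{(j)}$ by $\lambdao^{(j)}$ up to an $o(1)$ probability (Step 2), reducing to the oracle-tuned regressions, which are governed by the nodewise form of Corollary~\ref{col:asym_thm}. Under the stated hypothesis on $\delta$ — the condition of Corollary~\ref{col:asym_thm}, read for a sub-problem with a large enough candidate set, e.g. the last node of $\prec$, for which $|A_j|=p-1$ — assembling these nodewise bounds yields $\mathsf{Pr}_{\bX\sim\mathcal{N}(\bzero,\Sigma)^n}(\widehat G_\CV(\prec)\neq G)<\delta$ as $n\to\infty$.

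\emph{Main obstacle.} The delicate point is the dependence across nodes: the $p$ nodewise regressions use the single data matrix $\bX$, so the argument cannot multiply nodewise probabilities and must instead isolate one informative node (or union-bound over the $p$ of them) and verify that at least one sub-problem genuinely meets the sparsity/condition-number threshold of Corollary~\ref{col:asym_thm} with $|A_j|$ playing the role of $p$ — this is where the restriction on the admissible range of $\delta$ is consumed. The second technical hurdle, inherited from Theorem~\ref{thm:cv}, is pushing the convergence $|\lambdacv^{(j)}-\lambdao^{(j)}|\to0$ through to $\wne_j$, since the estimated penalty is only close to, not equal to, the oracle penalty; this rests on the breakpoint structure of the Lasso path and is what forces the $K=o(n)$ condition.
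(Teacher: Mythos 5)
Your reduction is the same one the paper intends: with the true ordering known, each node's parent set is the population support of the regression of $X_j$ onto its predecessors $A_j$, so the problem splits into $p$ neighborhood-selection instances to which Theorem~\ref{thm:main}/Corollary~\ref{col:asym_thm} and Theorem~\ref{thm:cv} (together with the Lasso solution-path argument) apply; the paper treats the corollary as immediate on exactly these grounds. Your added observations --- that $\Sigma^{(j)}$ is a principal submatrix of $\Sigma$ so $\kappa(\Sigma^{(j)})\le\kappa(\Sigma)$, and that one needs only a single informative node rather than independence across the $p$ dependent regressions --- are correct and fill in details the paper leaves implicit.

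The one genuine problem is the event direction in your conclusion. What your Steps 2--3 actually deliver is a bound on the probability of \emph{correct} recovery: since $\{\widehat{G}_{\CV}(\prec)=G\}$ forces the CV-tuned support at the chosen node to equal its parent set, transferring from $\lambdacv$ to $\lambdao$ and invoking Corollary~\ref{col:asym_thm} gives $\mathsf{Pr}(\widehat{G}_{\CV}(\prec)=G)<\delta$ as $n\to\infty$, equivalently $\mathsf{Pr}(\widehat{G}_{\CV}(\prec)\neq G)>1-\delta$. The display you end with, $\mathsf{Pr}(\widehat{G}_{\CV}(\prec)\neq G)<\delta$, is the opposite assertion --- it would say CV \emph{is} consistent --- and it does not follow from anything earlier in your argument; the ``$\neq$'' is evidently a typo inherited from the printed statement (compare the surrounding text ``CV will return the wrong DAG'' and Corollary~\ref{col:asym_thm} itself), so your final sentence should state the complementary event rather than reproduce the display verbatim. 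A minor further point: Theorem~\ref{thm:main} assumes the target node has $s\ge1$ nonzero coefficients, so ``the last node of $\prec$'' should be any node whose candidate set is large enough \emph{and} whose parent set is nonempty; the paper glosses over this as well.
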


Thus, even if we know the true ordering, CV will return the wrong DAG. If we do not know the true ordering, this result says that $\widehat{G}_{\CV}(\prec)$ is an inconsistent estimate of the minimal I-map corresponding to $\prec$ (see \citealp{lauritzen1996graphical} for definitions). Of course, assuming everything else is equal, structure learning with unknown ordering is at least as difficult as with a known ordering.

\section{Proof overview}
\label{sec:proof}

In this section, we outline the main idea of the proof of Theorem~\ref{thm:main}. Detailed proofs of both Theorem~\ref{thm:main} and~\ref{thm:cv} are deferred to the supplementary materials. 

We start with some observations. Without loss of generality, we assume that $\ne^* =[s]$.
For any matrix $\bX  \in \mathbb{R}^{n\times p}$, $i\in[p-1]$, and $\theta\in\mathbb{R}^{p-1}$, define 
\begin{align}
    G_{\bX,i}(\theta)
    :=\frac{1}{n}\inner{\bX_p - \sum_{j=1}^{p-1}\theta_j \bX_j}{\bX_i}.
\end{align}
We can write the classical KKT conditions for the Lasso as follows:
\begin{align}
\label{eq:kkt:main}
    \begin{cases}
        G_{\bX,i}(\theta)=\sign(\theta_i)\lambda, & \text{for $\theta_i\neq 0$} \\
        \abs{G_{\bX,i}(\theta)}\leq \lambda, & \text{for $\theta_i=0$}.
    \end{cases}
\end{align}
Then $\theta$ satisfies \eqref{eq:kkt:main} if and only if $\theta=\wtheta^{\lambda}$ is a Lasso solution for $\lambda$.
Moreover, if we define an ellipsoid $\mathcal{E}$ by 
\begin{align*}
    \mathcal{E}
    & :=
    \{(\theta - \theta^*)^\top\Gamma(\theta - \theta^*) \leq (\wtheta^{\lambdao} - \theta^*)^\top\Gamma(\wtheta^{\lambdao} - \theta^*)\},
\end{align*}
we can show (Appendix~\ref{sec:o_ellipsoid}) that any point $\theta\in\mathcal{E}$
cannot be a Lasso solution for any penalty $\lambda>0$.
It is easy to see that the Lasso solution for the oracle penalty $\lambdao$, $\wtheta^{\lambdao}$, lies on the boundary of this ellipsoid.

To prove Theorem \ref{thm:main}, we want to argue that the event $\wne^{\lambdao} = \ne^*=[s]$ is unlikely.
Let 
\begin{align}
\wne^G
=\setdef{i\in[p-1]}{\abs{G_{\bX,i}(\wtheta^{\lambdao})} = \lambdao}.    
\end{align}
We have $\wne^{\lambdao} \subseteq \wne^G$ by the KKT conditions \eqref{eq:kkt:main}.
We further argue in Appendix~\ref{sec:wne_size} that $\wne^G$ has at most one extra element almost surely,
and without loss of generality, we may assume that $\wne^G=[s]$ or $\wne^G=[s+1]$.
We will consider these two cases separately.
Namely, we will bound the following probability:
\begin{align*}
    \MoveEqLeft \mathsf{Pr}_{\bX\sim \mathcal{N}(\bzero,\Sigma)^{n}}(\wne^{\lambdao} = [s])\\
    & =
    \mathsf{Pr}_{\bX\sim \mathcal{N}(\bzero,\Sigma)^{n}}(\wne^{\lambdao} = \wne^G = [s]) \\
    & \qquad + 
    \mathsf{Pr}_{\bX\sim \mathcal{N}(\bzero,\Sigma)^{n}}(\wne^{\lambdao} = [s]\wedge \wne^G = [s+1])
\end{align*}

\paragraph{Case I: $\wne^G=[s]$.}

The first step is to show that there exists a line passing through $\wtheta^{\lambdao}$ such that any point in the intersection of a small neighborhood of $\wtheta^{\lambdao}$ and this line is also a Lasso solution for some penalty $\lambda$. See Figure~\ref{fig:line} for an illustration of the following argument.

We will do this by defining a line $L$ and show that it satisfies the KKT conditions \eqref{eq:kkt:main}.
Consider the following system of equations:
\begin{align*}
\begin{cases}
    G_{\bX,i}(\theta) = \sign(\wtheta^{\lambdao}_i)\lambda &\text{for $i\in[s]$} \\
    \theta_i = 0 &\text{for $i\notin[s]$}
\end{cases} \numberthis\label{eq:line:system}
\end{align*}
Geometrically, we can view these $p-1$ equations, which are linear in $\theta$ and $\lambda$, as hyperplanes in the $\theta$-$\lambda$ space which is a $p$-dimensional space.
It turns out that the intersection of these $p-1$ hyperplanes forms a line almost surely, which is the desired line $L$.
Since $\wtheta^{\lambdao}$ clearly is a solution of \eqref{eq:line:system} by the definition of $\wtheta^{\lambdao}$, we can write $L$ as 
\begin{align}
\label{eq:def:line}
    L
    & :=
    \setdef{\wtheta^{\lambdao} + \delta \theta'}{ \delta \in\mathbb{R}}
\end{align}
for some $\theta'\in\R^{p}$. 
We will define $\theta'$ formally in \eqref{eq:theta_prime} in the Supplementary Material.

Consider any point $\widetilde{\theta} = \wtheta^{\lambdao} + \delta \theta' \in L$ for sufficiently small $\abs{\delta}$.
We will check $\widetilde{\theta}$ satisfies the KKT conditions \eqref{eq:kkt:main}.
If $\abs{\delta}$ is sufficiently small, $\widetilde{\theta}_i\neq 0$ for $i\in[s]$ since $\wtheta^{\lambdao}_i\neq 0$ for $i\in[s]$.
By the construction of the system \eqref{eq:line:system}, it ensures that all $G_{\bX,i}(\widetilde{\theta})$ remain equal in magnitude for $i\in[s]$ and the signs are consistent, i.e. $\sign(\widetilde{\theta}_i) = \sign(G_{\bX,i}(\widetilde{\theta}))$ for $i\in [s]$.
It means that $\widetilde{\theta}$ satisfies the first condition in \eqref{eq:kkt:main}.
On the other hand, we set $\theta'_i=0$ for $i\notin [s]$ to ensure $\widetilde{\theta}_i\neq 0$ for $i\notin [s]$.
Recall the definition of $\wne^G$, we have $\abs{G_{\bX,i}(\wtheta^{\lambdao})}$ \emph{strictly} less than $\lambdao$ for $i\notin[s]$.
If $\abs{\delta}$ is sufficiently small, it ensures that $\abs{G_{\bX,j}(\widetilde{\theta})}\leq\abs{G_{\bX,i}(\widetilde{\theta})}$ for $i\in[s]$ and $j\notin [s]$.
It means that $\widetilde{\theta}$ satisfies the second condition in \eqref{eq:kkt:main}.
Hence, for a sufficiently small $\abs{\delta}$, $\widetilde{\theta}$ is a Lasso solution.

Now, if the line $L$ is not a tangent line of $\mathcal{E}$ at $\wtheta^{\lambdao}$, then some point in $L$ must be inside the ellipsoid $\mathcal{E}$ .
This contradicts the observation that no Lasso solution can be inside $\mathcal{E}$.
Therefore, $L$ must be a tangent line.
From here, we can explicitly bound the probability of $L$ being a tangent line and hence also the probability $\mathsf{Pr}_{\bX\sim \mathcal{N}(\bzero,\Sigma)^{n}}(\wne^{\lambdao}=\wne^G=[s])$.

\begin{figure}[t]
\centering
  \includegraphics[width=0.5\textwidth]{./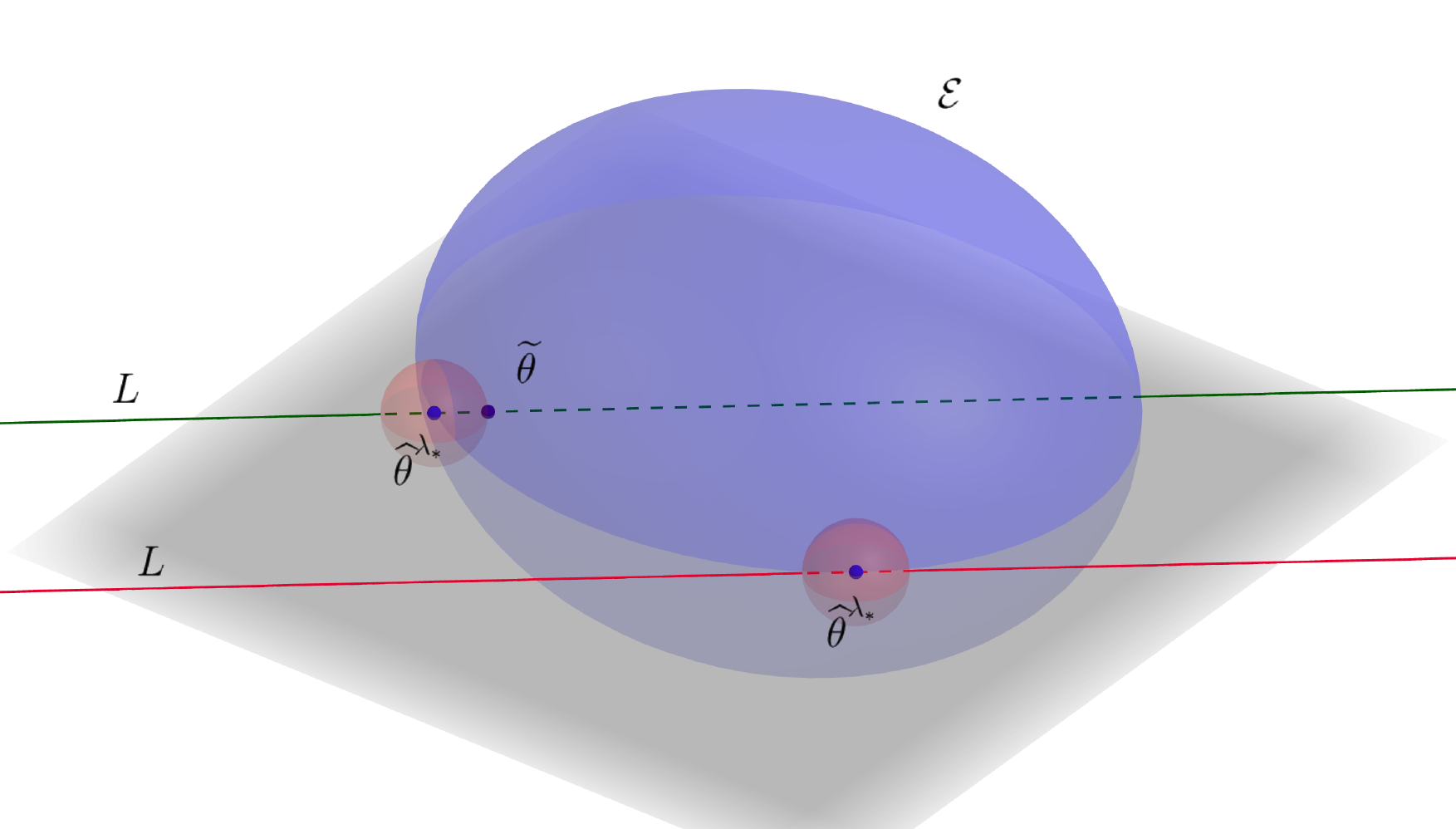}
\caption{
  Illustration of the ellipsoid $\mathcal{E}$ and the line $L$ when $p-1=3$ and $\wne^{\lambdao} = \wne^G=\{1,2\}$.  
  (green) There exists another Lasso solution $\widetilde{\theta}$ inside $\mathcal{E}$ when it is not a tangent line. 
  (red) No Lasso solution can be found inside $\mathcal{E}$ when it is a tangent line.
  }
  \label{fig:line}
\end{figure}

\paragraph{Case II: $\wne^{G}=[s+1]$.}
\begin{figure}[t]
\centering
  \includegraphics[width=0.5\textwidth]{./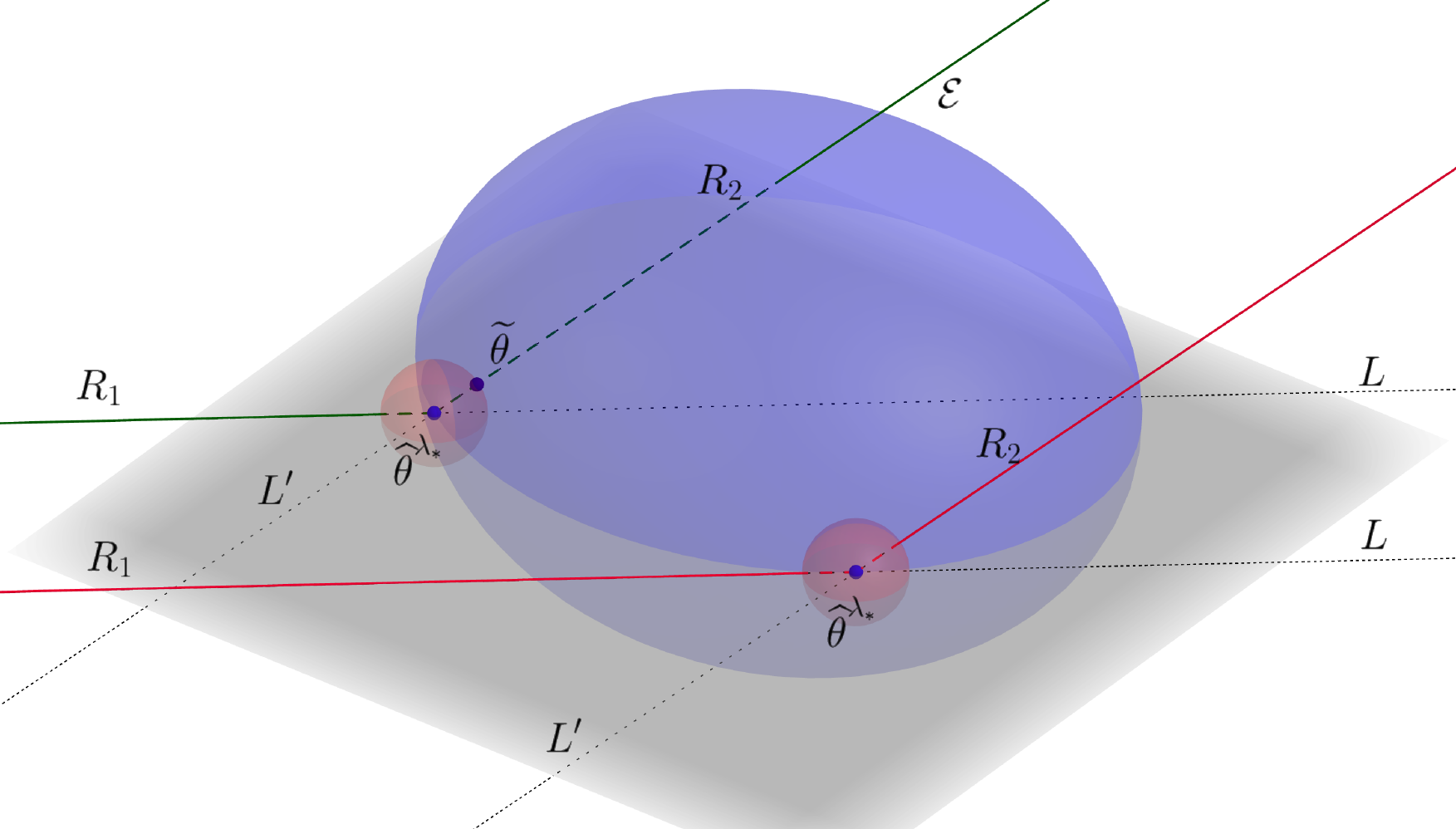}
\caption{
  Illustration of the ellipsoid $\mathcal{E}$ and the rays of intersection when $p-1=3$, $\wne^{\lambdao}=\{1,2\}$ and  $\wne^G=\{1,2,3\}$. 
  (green) There exists another Lasso solution $\widetilde{\theta}$ inside $\mathcal{E}$ when one of the rays shoots into $\mathcal{E}$. 
  (red) No Lasso solution can be found inside $\mathcal{E}$ when both rays shoot out of $\mathcal{E}$.
  }
  \label{fig:rays}
\end{figure}
In this case, instead of a line, there are two rays shooting from $\wtheta^{\lambdao}$ such that any point in the intersection of a small neighborhood of $\wtheta^{\lambdao}$ and these two rays is also a Lasso solution for some penalty $\lambda$. See Figure~\ref{fig:rays}.

In this case, we have $\abs{G_{\bX,s+1}(\wtheta^{\lambdao})} = \lambdao$.
If we follow the same argument as in the previous case, we can no longer establish $\abs{G_{\bX,s+1}(\widetilde{\theta})}\leq\abs{G_{\bX,i}(\widetilde{\theta})}$ for $i \in [s]$ \emph{no matter how small $\abs{\delta}$ is}.
Namely, the second condition in \eqref{eq:kkt:main} does not hold.
Fortunately, it turns out to only cause problems for one of $\delta\geq 0$ or $\delta\leq 0$.
We can indeed view the line $L$ in \eqref{eq:def:line} as two rays shooting from $\wtheta^{\lambdao}$ in the opposite directions which correspond to the cases of $\delta\geq 0$ or $\delta\leq 0$.
That means the argument in the case of $\wne^G=[s]$ still holds for one of these two rays which we denote by $R_1$.
Hence, one of the desired two rays is $R_1$.

We now only have one side of $L$ which is $R_1$.
The argument of constructing a Lasso solution inside $\mathcal{E}$ when $L$ is not a tangent line does not hold because $R_1$ probably does not intersect $\mathcal{E}$ (except $\wtheta^{\lambdao}$) even when $R_1$ is not a tangent ray.
It is intuitive that there is another ray $R_2$ instead of the other side of $L$ that any point in the intersection of a small neighborhood of $\wtheta^{\lambdao}$ and $R_2$ is a Lasso solution.

To define another ray $R_2$, consider the following system of equations:
\begin{align*}
\begin{cases}
    G_{\bX,i}(\theta) = \sign(G_{\bX,i}(\wtheta^{\lambdao}))\lambda, &\text{for $i\in[s+1]$} \\
    \theta_i = 0, &\text{for $i\notin[s+1]$}.
\end{cases} \numberthis\label{eq:ray:system}
\end{align*}
By a similar argument as in the previous case, the intersection of these hyperplanes forms a line passing through $\wtheta^{\lambdao}$ almost surely which we can write as 
\begin{align}
\label{eq:def:line_2}
    L'
    & :=
    \setdef{\wtheta^{\lambdao} + \delta \theta''}{ \delta \in\mathbb{R}}
\end{align}
for some $\theta''\in\R^{p}$. 
We will define $\theta''$ formally in \eqref{eq:theta_dprime} in the Supplementary Material.

Consider any point $\widetilde{\theta} =\wtheta^{\lambdao} + \delta \theta''\in L'$ for a sufficiently small $\abs{\delta}$.
For $i\neq s+1$, $\widetilde{\theta}_i$  satisfies the first or second condition of the KKT conditions \eqref{eq:kkt:main} accordingly by a similar analysis as in the previous case.
For $i=s+1$, $\widetilde{\theta}_{s+1} = \wtheta^{\lambdao}_{s+1} + \delta\theta''_{s+1} = \delta \theta''_{s+1}$ is no longer $0$ and we need to check if it satisfies the first condition in \eqref{eq:kkt:main}.
By the construction of the system \eqref{eq:ray:system}, it ensures that all $G_{\bX,i}(\widetilde{\theta})$ remain equal in magnitude for $i\in [s+1]$.
We also need to check the sign consistency, i.e. $\sign(\widetilde{\theta}_{s+1}) = \sign(G_{\bX,s+1}(\widetilde{\theta}))$.
We again view $L'$ in \eqref{eq:def:line_2} as two rays shooting from $\wtheta^{\lambdao}$ in the opposite directions.
It turns out that only one of them ensures this sign consistency and we choose this ray as the desired second ray $R_2$.

Now, 
if one of the rays $R_1,R_2$ shoots into (i.e. intersects) $\mathcal{E}$, this would contradict the observation that no Lasso solution can be inside $\mathcal{E}$.
Therefore, both rays $R_1,R_2$ shoot out of $\mathcal{E}$.
As before, we can bound the probability that both $R_1,R_2$ shoot out of $\mathcal{E}$ and hence the probability $\mathsf{Pr}_{\bX\sim \mathcal{N}(\bzero,\Sigma)^{n}}(\wne^{\lambdao}=[s]\wedge \wne^{G}=[s+1])$.

Combining these two cases, we obtain an explicit bound on the  probability $\mathsf{Pr}_{\bX\sim \mathcal{N}(\bzero,\Sigma)^{n}}(\wne^{\lambdao}=[s])$.

\begin{figure*}[t]
  \centering
  \includegraphics[width=0.7\textwidth]{./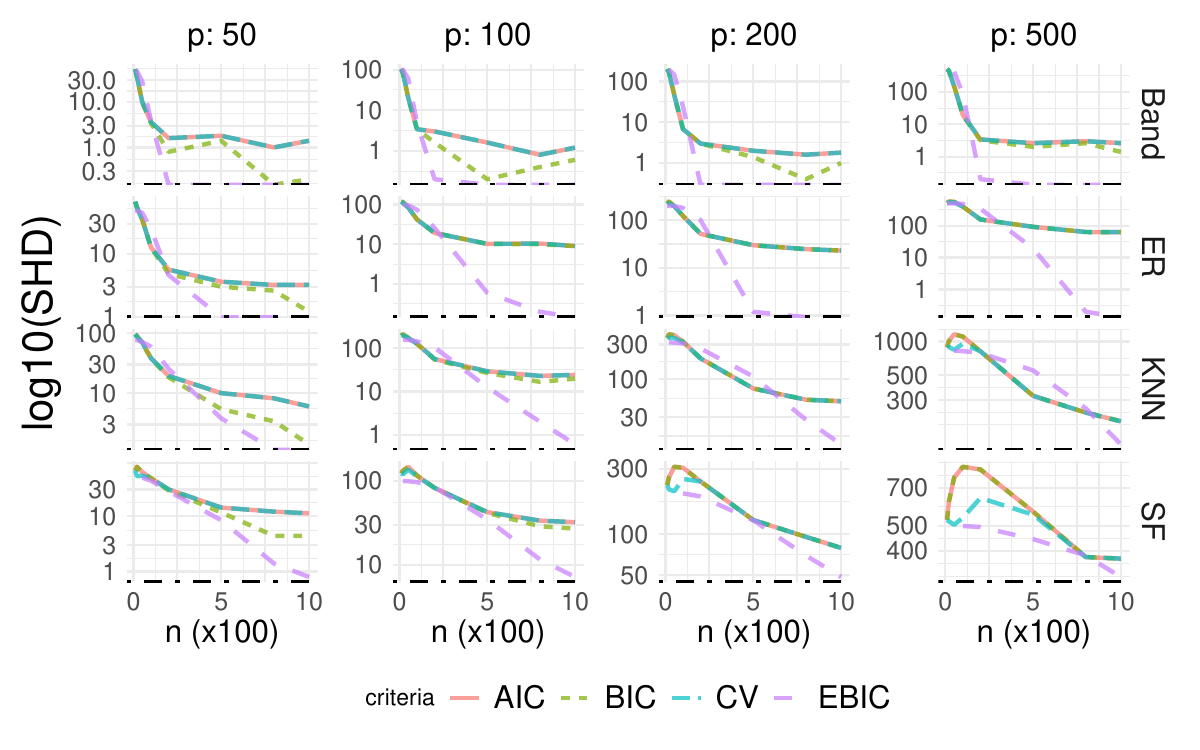}
  \caption{Log-SHD vs. $n$ (in hundreds) and $p$ on different graph types using NS to compare criteria. The black dot-dash line represents zero SHD, i.e. perfect neighborhood selection.}
  \label{fig: fig1 NS SHD}
\end{figure*}

\section{Experiments}\label{sec: Experiment}
In this section, we demonstrate through simulations the failure of CV for structure learning, verifying our main theoretical results. 
We include here only a snapshot of our results to convey the main point; complete details and results from our exhaustive experiments can be found in Appendix~\ref{Appendix Experiement}, including additional experiments on non-Gaussian data.
\begin{figure*}[!h]
  \centering
  \includegraphics[width=0.7\textwidth]{./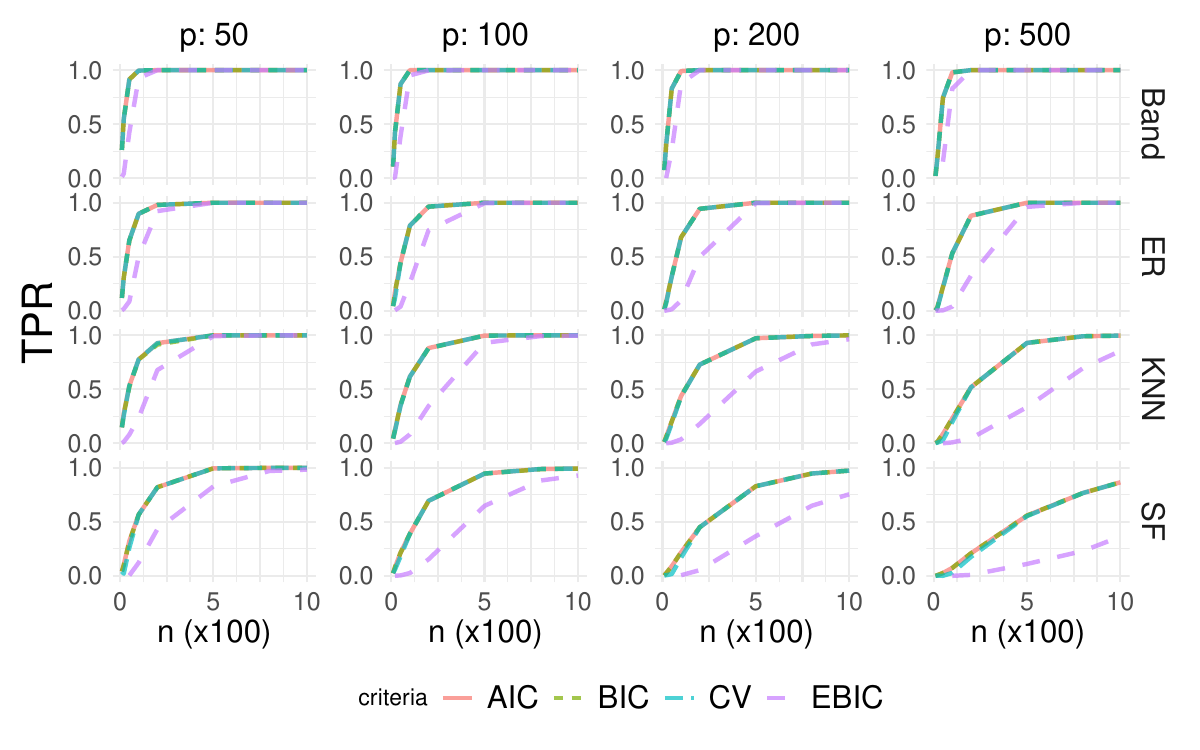}
  \caption{TPR vs. sample size $n$ (in hundreds) and $p$ on different graph types with NS to compare criteria.}
  \label{fig: fig2.1 NS TPR and FDR}
\end{figure*}

\begin{figure*}[!h]
  \centering
  \includegraphics[width=0.7\textwidth]{./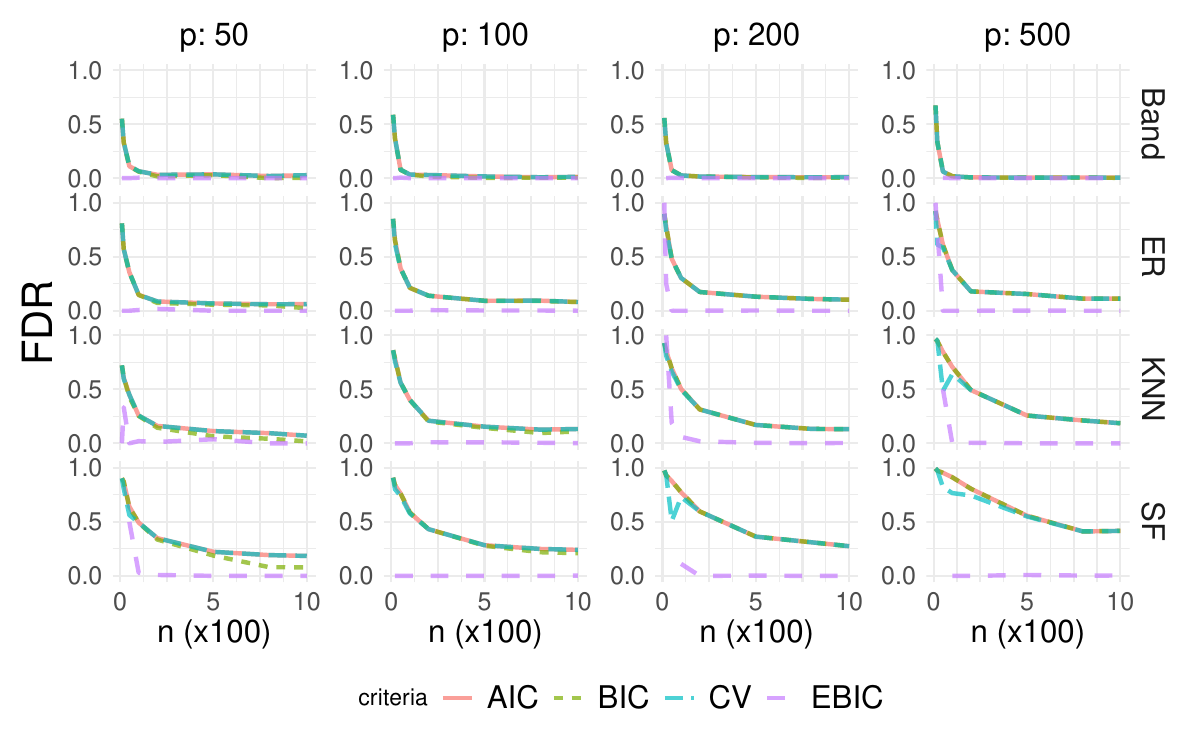}
  \caption{FDR vs. sample size $n$ (in hundreds) and $p$ on different graph types with NS to compare criteria.}
  \label{fig: fig2.2 NS TPR and FDR}
\end{figure*}

We use the  CV-selected $\lambdacv$ to approximate $\lambdao$ and compare its neighborhood estimate with those selected by several commonly used criteria: Akaike information criterion (AIC)~\cite{akaike1974new}, Bayesian information criterion (BIC)~\cite{schwarz1978estimating}, and Extended Bayesian information criteria (EBIC)~\cite{foygel2010extended}. The performance is evaluated via 
\begin{enumerate}
    \item The average structural hamming distance (SHD) to measure the number of incorrectly identified neighbors;
    \item The average true positive rate (TPR) and false discovery rate (FDR).
\end{enumerate} 
We let the number of observations $n$ and the number of observed variables $p$ to vary independently so one can be greatly larger than another. To validate comparisons between criteria, we simulate four different graphs: the Band graph, Scale-Free (SF), Erdös-Rényi (ER) and K-Nearest Neighbor (KNN) graphs. Besides the neighborhood Selection (NS), we also use three popular algorithms for Lasso estimators: Graphical lasso (Glasso)~\cite{friedman2008sparse}, constrained $\ell_1$-minimization for inverse matrix estimation (CLIME)~\cite{cai2011constrained} and Tuning-Insensitive Graph Estimation and Regression (TIGER)~\cite{liu2017tiger}. They are implemented in the \texttt{glasso} and \texttt{flare} packages for R.

\paragraph{Results}

We focus on NS here so as to corroborate our main theoretical results. Exceeding wall time limit (three hours) or undefined FDR value for all zero estimates are marked as missing points for plotting. Performance results with Glasso, Clime and Tiger are postponed to the supplementary materials, with similar conclusions.

In Figure~\ref{fig: fig1 NS SHD}, as expected, we see the number of incorrectly identified neighbors decreasing with increasing sample size $n$ given a fixed $p$. As $p$ increases, the task becomes increasingly harder, which is reflected by greater average SHD. The average SHD of CV always decreases with increasing $n$ but never reaches zero, regardless of how large $n$ gets. 
This pattern 
persists for Glasso, Clime and Tiger as well (see the supplementary materials). This confirms Theorem~\ref{thm:main}.
Besides CV, AIC performs poorly as well and never reaches zero average SHD.
On the other hand, BIC achieves smaller average SHD, but its performance in high-dimensions is unsatisfactory, which is consistent with known results for BIC \citep[e.g.][]{mestres2018selection}. The only candidate with constant decreasing trend with increasing $n$ for all $p$ is EBIC. Specifically, EBIC is always the first one to get closest to or reach zero, regardless of $p$.

The correctness of EBIC is more obvious when comparing averaged FDR in Figure~\ref{fig: fig2.1 NS TPR and FDR} and~\ref{fig: fig2.2 NS TPR and FDR}. Unsurprisingly, we see TPR gradually reaches 100\%, depending on the graph type. Specifically, CV and AIC are the first to reach 100\% TPR, while EBIC falls behind. 
However, this is not whole story:
The FDR for CV is problematic, and fails to get near 0\% on average.
(For the Band graph, we provide a more detailed numerical FDR summary via NS tuned by CV in the supplementary materials.) 
Moreover,
CV fails in average FDR in all other algorithms (see the supplementary materials) as well.

\section{Conclusion}

Cross-validation is the parameter selection criterion of choice in most ML applications, however, its suitability for structure learning problems is not well understood outside of empirical observations.
To address this gap, we proved that for a general family of Gaussian graphical models, including DAG models, CV is provably inconsistent for learning the structure of a graph. This shows that using CV as a naive alternative to difficult-to-implement selection criteria is ill-advised. It would be of interest to extend our proofs to non-Gaussian models, where our experiments indeed suggest CV is still inconsistent. On the positive side, our experiments indicate that EBIC is robust across a wide range of settings.

\bibliography{ref}
\bibliographystyle{abbrvnat}

\appendix
\section{Proof of Theorem \ref{thm:main}}

In this section, we will prove Theorem \ref{thm:main}. Detailed proof of various supporting lemmas can be found in Appendix~\ref{sec:omitted_proof}.
Before we go into the detail, we present some general observations.

We first state a useful lemma from \cite{meinshausen2006} which we will use often: These are the well-known KKT conditions for the Lasso solution.
We first define the following notation.
For any matrix $\bX$ and $i\in[p-1]$, we define
\begin{align*}
    G_{\bX,i}(\theta)
    :=
    \frac{1}{n}\inner{\bX_p - \sum_{j=1}^{p-1}\theta_j\bX_j}{\bX_i} \numberthis\label{eq:g_def}.
\end{align*}

\begin{lemma}[KKT conditions, \cite{meinshausen2006}]\label{lem:subgradient}
For any matrix $\bX  \in \mathbb{R}^{n\times p}$ and $\theta\in\mathbb{R}^{p-1}$, we have the following KKT conditions.
For any $\lambda>0$,
\begin{align*}
    \theta=\wtheta^{\lambda}
\end{align*}
if and only if
\begin{align*}
    \begin{cases}
        \text{if $\theta_i\neq 0$, then $G_{\bX,i}(\theta)=\sign(\theta_i)\lambda$} \\
        \text{if $\theta_i=0$, then $\abs{G_{\bX,i}(\theta)}\leq \lambda$}
    \end{cases}\numberthis\label{eq:kkt}
\end{align*}
\end{lemma}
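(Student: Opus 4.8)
The plan is to invoke the first-order optimality condition for convex minimization (Fermat's rule): since the Lasso objective
$$f(\theta) := \frac{1}{2n}\normtwo{\bX_p - \sum_{j=1}^{p-1}\theta_j\bX_j}^2 + \lambda\norm{\theta}_1$$
is a sum of convex functions and hence convex, a point $\theta\in\R^{p-1}$ equals a minimizer $\wtheta^{\lambda}$ if and only if $0\in\partial f(\theta)$. The entire argument then reduces to computing $\partial f(\theta)$ and unpacking this inclusion coordinate by coordinate.

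First I would decompose $f = g + \lambda h$, where $g(\theta) := \frac{1}{2n}\normtwo{\bX_p - \sum_{j}\theta_j\bX_j}^2$ is differentiable with
$$\nabla g(\theta)_i = -\frac{1}{n}\inner{\bX_p - \sum_{j=1}^{p-1}\theta_j\bX_j}{\bX_i} = -G_{\bX,i}(\theta),$$
and $h(\theta) := \norm{\theta}_1$ has the well-known subdifferential
$$\partial h(\theta) = \setdef{z\in\R^{p-1}}{z_i=\sign(\theta_i)\text{ if }\theta_i\neq0,\ z_i\in[-1,1]\text{ if }\theta_i=0}.$$
Because $g$ is differentiable, the subdifferential sum rule applies without any qualification, giving $\partial f(\theta) = \{\nabla g(\theta)\} + \lambda\,\partial h(\theta)$.

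Next I would expand $0\in\partial f(\theta)$: it holds precisely when there is a vector $z\in\partial h(\theta)$ with $-G_{\bX,i}(\theta)+\lambda z_i = 0$, i.e. $G_{\bX,i}(\theta)=\lambda z_i$, for all $i\in[p-1]$. For a coordinate with $\theta_i\neq0$, membership $z\in\partial h(\theta)$ forces $z_i=\sign(\theta_i)$, so the equation becomes $G_{\bX,i}(\theta)=\sign(\theta_i)\lambda$; for a coordinate with $\theta_i=0$, the only constraint on $z_i$ is $z_i\in[-1,1]$, which (using $\lambda>0$) is equivalent to $\abs{G_{\bX,i}(\theta)}\leq\lambda$ via the choice $z_i=G_{\bX,i}(\theta)/\lambda$. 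This yields both directions simultaneously, since given the two displayed conditions of the lemma one simply reads off a certifying vector $z$.

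This is the classical subgradient characterization of the Lasso, so there is no real obstacle; the only points requiring mild care are the legitimacy of the sum rule for subdifferentials (guaranteed here because one summand is smooth) and the routine bookkeeping translating ``there exists a subgradient $z$'' into the clean per-coordinate statement, together with the remark that $\wtheta^{\lambda}$ should be read as any minimizer when the Lasso solution fails to be unique (the characterization is unaffected).
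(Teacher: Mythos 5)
Your proof is correct and is exactly the classical subgradient (Fermat's rule) argument that the paper itself does not reproduce but simply imports by citing \citet{meinshausen2006}; the gradient computation, the $\ell_1$ subdifferential, and the coordinate-wise translation all check out. The only caveat you already handle: when the Lasso minimizer is not unique, the lemma should be read as characterizing the set of minimizers, which your argument does.
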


Recall the definition of $\Gamma$ in \eqref{eq:nhbdlasso}, and define an ellipsoid by
\begin{align*}
\mathcal{E}
& :=
    \{\theta\in\R^{p-1} : (\theta - \theta^*)^\top\Gamma(\theta - \theta^*) \leq (\wtheta^{\lambdao} - \theta^*)^\top\Gamma(\wtheta^{\lambdao} - \theta^*)\}.
\end{align*}
It is clear that the Lasso solution for the oracle penalty $\lambdao$, $\wtheta^{\lambdao}$, lies on the boundary of this ellipsoid.
Our next observation is that for any matrix $\bX$, any point $\theta\in\mathbb{R}^{p-1}$ inside the ellipsoid $\mathcal{E}$ cannot be a Lasso solution for any penalty $\lambda>0$:
\begin{restatable}{lemma}{oellipsoid}\label{lem:o_ellipsoid}
For any matrix $\bX \in \mathbb{R}^{n\times p}$ and $\lambda>0$, if $\theta\in \mathbb{R}^{p-1}$ satisfies 
\begin{align*}
    (\theta - \theta^*)^\top\Gamma(\theta - \theta^*)<(\wtheta^{\lambdao} - \theta^*)^\top\Gamma(\wtheta^{\lambdao} - \theta^*)
\end{align*}
then $\theta\neq \wtheta^{\lambda}$.
\end{restatable}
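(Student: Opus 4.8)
The plan is to exploit the fact that the oracle penalty $\lambdao$ minimizes the out-of-sample prediction error, and to rewrite that prediction error as a quadratic form in $\Gamma$ (up to an additive constant independent of $\theta$). Concretely, for any fixed $\theta\in\mathbb{R}^{p-1}$ and a fresh sample $Y\sim\mathcal{N}(\bzero,\Sigma)$, I would expand
\begin{align*}
\E_{Y}\Bigl(Y_p-\sum_{j=1}^{p-1}\theta_jY_j\Bigr)^2
&=\E_Y\bigl(Y_p-\theta^\top Y_{[p-1]}\bigr)^2\\
&=a-2\theta^\top v+\theta^\top\Gamma\theta,
\end{align*}
using the block structure \eqref{eq:sigma_def} of $\Sigma$. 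Since $\theta^*=\Gamma^{-1}v$, completing the square gives
\begin{align*}
\E_{Y}\Bigl(Y_p-\sum_{j=1}^{p-1}\theta_jY_j\Bigr)^2
&=(\theta-\theta^*)^\top\Gamma(\theta-\theta^*)+\bigl(a-v^\top\Gamma^{-1}v\bigr).
\end{align*}
The second term is a constant not depending on $\theta$, so the function $\theta\mapsto\E_Y(Y_p-\sum_j\theta_jY_j)^2$ is, up to that constant, exactly the quadratic form defining the ellipsoid $\mathcal{E}$.

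With this identity in hand, the lemma is essentially immediate. Suppose toward a contradiction that some $\theta$ with $(\theta-\theta^*)^\top\Gamma(\theta-\theta^*)<(\wtheta^{\lambdao}-\theta^*)^\top\Gamma(\wtheta^{\lambdao}-\theta^*)$ satisfies $\theta=\wtheta^{\lambda}$ for some $\lambda>0$. By the KKT characterization (Lemma~\ref{lem:subgradient}), $\wtheta^{\lambda}$ is the Lasso solution at penalty $\lambda$, so it is a candidate in the minimization \eqref{def:oracle} defining $\lambdao$. But by the displayed identity, its out-of-sample prediction error equals $(\theta-\theta^*)^\top\Gamma(\theta-\theta^*)+\text{const}$, which by assumption is strictly smaller than $(\wtheta^{\lambdao}-\theta^*)^\top\Gamma(\wtheta^{\lambdao}-\theta^*)+\text{const}$, the prediction error achieved at $\lambdao$. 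This contradicts the optimality of $\lambdao$ in \eqref{def:oracle}. Hence no such $\theta$ can be a Lasso solution, i.e.\ $\theta\neq\wtheta^{\lambda}$ for every $\lambda>0$.

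I do not anticipate a serious obstacle here; the only mild subtlety is bookkeeping about whether the $\arg\min$ in \eqref{def:oracle} is attained and whether every Lasso solution $\wtheta^{\lambda}$ for $\lambda>0$ is genuinely in the feasible set of that minimization — it is, by definition, since the minimization ranges over all $\lambda>0$ and for each such $\lambda$ the value $\E_Y(Y_p-\sum_j\wtheta^{\lambda}_jY_j)^2$ is exactly what is being minimized. One should also note the degenerate possibility that the strict inequality in the hypothesis forces $\theta\neq\theta^*$ only when $\wtheta^{\lambdao}\neq\theta^*$; but the argument above never needs to separate cases, since it only compares quadratic-form values. So the proof reduces entirely to the quadratic expansion plus one line invoking optimality of $\lambdao$.
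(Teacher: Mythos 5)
Your proposal is correct and is essentially the paper's own proof: the same expansion $\E_Y(Y_p-\sum_j\theta_jY_j)^2=(\theta-\theta^*)^\top\Gamma(\theta-\theta^*)+a-v^\top\Gamma^{-1}v$ followed by the optimality of $\lambdao$ in \eqref{def:oracle}. The only cosmetic difference is that you phrase the final step as a contradiction while the paper states the resulting inequality directly; the substance is identical.
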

The proof of this lemma can be found in Section \ref{sec:o_ellipsoid}.

By Lemma \ref{lem:subgradient}, for $i\in \wne^{\lambdao}$, we have $\abs{G_{\bX,i}(\wtheta^{\lambdao})} = \lambdao$.
Let $\wne^{G}$ be the set 
\begin{align*}
    \wne^G
    & =
    \setdef{i\in[p-1]}{\abs{G_{\bX,i}(\wtheta^{\lambdao})} = \lambdao}.\numberthis\label{eq:wneg_def}
\end{align*}
Note that $\wne^{\lambdao} \subseteq \wne^G$ by Lemma \ref{lem:subgradient}.
The following lemma shows that $\wne^G$ has at most one more extra element almost surely.

\begin{restatable}{lemma}{wnesize}\label{lem:wne_size}

Let $\bX\in\mathbb{R}^{n \times p}$ be a sample matrix where each row is an i.i.d.~sample drawn from $\mathcal{N}(\bzero,\Sigma)$.
Suppose the number of samples $n > \abs{\wne^{\lambdao}} + 2$.
Then, we have $\abs{\wne^{G}}\leq \abs{\wne^{\lambdao}} + 1$ almost surely.

\end{restatable}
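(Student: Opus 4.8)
The plan is to argue that if $\abs{\wne^G} \geq \abs{\wne^{\lambdao}} + 2$, then the matrix $\bX$ must satisfy a nontrivial algebraic equation, which happens with probability zero when the rows are drawn i.i.d.\ from a non-degenerate Gaussian. Write $S := \wne^{\lambdao} = \{i : \wtheta^{\lambdao}_i \neq 0\}$ and let $s' = \abs{S}$. By the KKT conditions (Lemma~\ref{lem:subgradient}), for $i \in S$ we have $G_{\bX,i}(\wtheta^{\lambdao}) = \sign(\wtheta^{\lambdao}_i)\lambdao$, and $\wtheta^{\lambdao}$ is supported on $S$, so the subvector $(\wtheta^{\lambdao}_i)_{i\in S}$ is pinned down as the solution of the $s'$ linear equations $\frac{1}{n}\bX_S^\top(\bX_p - \bX_S\wtheta^{\lambdao}_S) = \lambdao\cdot\sign(\wtheta^{\lambdao}_S)$, i.e.\ $\wtheta^{\lambdao}_S = (\bX_S^\top\bX_S)^{-1}(\bX_S^\top\bX_p - n\lambdao\,\sign(\wtheta^{\lambdao}_S))$, which is well-defined a.s.\ since $\bX_S^\top\bX_S$ is invertible a.s.\ when $n > s'$. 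Now suppose there are two distinct indices $j, k \notin S$ with $\abs{G_{\bX,j}(\wtheta^{\lambdao})} = \abs{G_{\bX,k}(\wtheta^{\lambdao})} = \lambdao$. The first step is to eliminate the unknown $\lambdao$: from any $i_0 \in S$ we have $\lambdao = \abs{G_{\bX,i_0}(\wtheta^{\lambdao})}$, so the two conditions become $G_{\bX,j}(\wtheta^{\lambdao})^2 = G_{\bX,i_0}(\wtheta^{\lambdao})^2$ and $G_{\bX,k}(\wtheta^{\lambdao})^2 = G_{\bX,i_0}(\wtheta^{\lambdao})^2$.

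Next I would observe that each $G_{\bX,i}(\wtheta^{\lambdao}) = \frac1n \bX_i^\top(\bX_p - \bX_S\wtheta^{\lambdao}_S)$ is a \emph{rational function} of the entries of $\bX$ (polynomial in the entries of $\bX$ and in the entries of $(\bX_S^\top\bX_S)^{-1}$, hence rational with denominator a power of $\det(\bX_S^\top\bX_S)$), once we substitute the closed form for $\wtheta^{\lambdao}_S$ above. Here $\lambdao$ itself also depends on $\bX$, but only through the one-dimensional optimization in \eqref{def:oracle}; the cleaner route is to treat $\lambdao$ as a free scalar parameter $t$ and note that for a \emph{fixed} sign pattern $\sigma \in \{\pm1\}^{s'}$ and \emph{fixed} active set $S$, the map $t \mapsto \wtheta(t)$ with $\wtheta(t)_S = (\bX_S^\top\bX_S)^{-1}(\bX_S^\top\bX_p - nt\sigma)$ traces an affine curve, and $G_{\bX,j}(\wtheta(t))$ is affine in $t$. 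The event $\abs{\wne^G}\geq s'+2$ then says: there exist $S$ (finitely many choices, namely $\binom{p-1}{s'}$), a sign pattern $\sigma$ (finitely many), and a value $t > 0$, such that the three affine-in-$t$ functions $G_{\bX,i_0}(\wtheta(t))$, $G_{\bX,j}(\wtheta(t))$, $G_{\bX,k}(\wtheta(t))$ all have equal absolute value — equivalently, two of the finitely many polynomial resultant-type conditions in the entries of $\bX$ hold simultaneously. By a union bound over the finitely many choices of $S$, $\sigma$, $j$, $k$, it suffices to show each such coincidence has probability zero.

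The final step is the genericity argument: for fixed $S,\sigma,j,k$, the condition ``$\exists t: \abs{G_{\bX,j}(\wtheta(t))} = \abs{G_{\bX,i_0}(\wtheta(t))}$ and $\abs{G_{\bX,k}(\wtheta(t))} = \abs{G_{\bX,i_0}(\wtheta(t))}$'' is the vanishing of a polynomial in the entries of $\bX$ that is \emph{not identically zero} (one can exhibit a single matrix $\bX$ for which it fails — e.g.\ perturb columns $\bX_j, \bX_k$ generically while keeping $\bX_S, \bX_p$ fixed, so that $G_{\bX,j}$ and $G_{\bX,k}$ take generic distinct values), and the zero set of a nonzero polynomial has Lebesgue measure zero in $\mathbb{R}^{np}$. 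Since the Gaussian law $\mathcal{N}(\bzero,\Sigma)^n$ is absolutely continuous (as $\Sigma \succ 0$), this event has probability zero. Intersecting with the a.s.\ events $\det(\bX_S^\top\bX_S) \neq 0$ for all $S$ with $\abs{S}\leq s'$ (which needs $n > s'$, and since $s' = \abs{\wne^{\lambdao}}$ the hypothesis $n > \abs{\wne^{\lambdao}}+2$ gives ample room) and taking a union bound over the finitely many $(S,\sigma,j,k)$ completes the proof.

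\textbf{Main obstacle.} The delicate point is handling the dependence of $\lambdao$ on $\bX$: $\lambdao$ is defined by an optimization over the unknown distribution, so it is not a priori a nice (e.g.\ rational) function of $\bX$, and one cannot directly say ``the event is polynomial in $\bX$.'' The fix sketched above — fixing the combinatorial data $(S,\sigma)$ and quantifying existentially over the scalar $t$, then observing that for each fixed choice the relevant coincidence is an honest polynomial condition on $\bX$, and union-bounding — sidesteps this, but writing it carefully (in particular justifying that ``$\exists t>0$ with these affine functions having equal magnitude'' is itself a closed polynomial condition, via resultants or by case-splitting on which sign combination is active) is where the real work lies. A secondary nuisance is the ``almost surely'' bookkeeping: one must ensure all the inversions $(\bX_S^\top\bX_S)^{-1}$ are valid on the relevant full-measure event, which is exactly what the hypothesis $n > \abs{\wne^{\lambdao}}+2$ buys.
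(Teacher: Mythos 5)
Your proposal is correct in substance and rests on the same core idea as the paper's proof: if $\abs{\wne^G}\geq\abs{\wne^{\lambdao}}+2$, the KKT equalities impose $s+2$ linear constraints on the $s+1$ unknowns $(\wtheta^{\lambdao}_S,\lambdao)$, and consistency of this over-determined system is an algebraically degenerate coincidence that continuous Gaussian data avoid almost surely. Where you differ is in how that degeneracy is certified. The paper keeps $\lambda$ as an unknown, expresses consistency as the vanishing of the determinant of the augmented $(s+2)\times(s+2)$ matrix $\wGamma_{[s+2],[s]\cup\{p,*\}}$, expands $\bX_p$ through its projection onto $\mathrm{span}(\bX_1,\dots,\bX_{s+2})$, and uses that the regression coefficients $\alpha_{s+1},\alpha_{s+2}$ are conditionally Gaussian (together with a Cramer's-rule/block-matrix computation showing a cofactor is a.s.\ nonzero) to conclude the determinant cannot vanish almost surely; the hypothesis $n>\abs{\wne^{\lambdao}}+2$ is exactly what makes those conditional laws nondegenerate. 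You instead eliminate $\lambda$ by parametrizing the active-set solution as an affine curve $t\mapsto\wtheta(t)$ (so $G_{\bX,i}(\wtheta(t))=t\sigma_i$ for $i\in S$ and $G_{\bX,j}(\wtheta(t))=a_j+tb_j$ for $j\notin S$), reduce the bad event to a common root of two quadratics in $t$, i.e.\ a resultant-type polynomial identity in the entries of $\bX$ after clearing the $\det(\bX_S^\top\bX_S)$ denominators, and invoke absolute continuity of $\mathcal{N}(\bzero,\Sigma)^n$ against the zero set of a not-identically-zero polynomial, with an explicit union bound over the finitely many data-dependent choices of support, sign pattern and extra indices --- a step the paper compresses into a ``without loss of generality.'' Both routes are sound; yours is arguably more elementary (no conditional-Gaussian computation, and it would even go through with only $n>\abs{\wne^{\lambdao}}$) and is more careful about the data-dependent $(S,\sigma)$, at the cost of the case analysis over sign combinations and degenerate denominators (e.g.\ $b_j=\pm1$, $a_j=0$) that you correctly flag as the remaining work, whereas the paper's determinant formulation packages all of those cases into a single a.s.-nonvanishing determinant.
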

The proof of this lemma can be found in Section \ref{sec:wne_size}.

To prove Theorem \ref{thm:main}, we want to argue that the event $\wne^{\lambdao} = \ne^*$ is unlikely to happen.
Without loss of generality, we assume that $\ne^* =[s]$ for some integer $s$.
If we assume $\ne^* = \wne^{\lambdao}$, then we have $\wne^{\lambdao} = [s]$.
Also, since $\abs{\wne^{G}}$ is either $\abs{\wne^{\lambdao}}$ or $\abs{\wne^{\lambdao}}+1$ and $\wne^{\lambdao}\subseteq\wne^G$, we assume $\wne^{G} = [s+1]$ if $\wne^{G}\neq \wne^{\lambdao}$.
We will consider these two cases separately.
Namely, we need to bound the following probability
\begin{align*}
    &
    \mathsf{Pr}_{\bX\sim \mathcal{N}(\bzero,\Sigma)^{n}}(\wne^{\lambdao} = \ne^*) \\
    & =
    \mathsf{Pr}_{\bX\sim \mathcal{N}(\bzero,\Sigma)^{n}}(\wne^{\lambdao} = [s]) \\
    & =
    \mathsf{Pr}_{\bX\sim \mathcal{N}(\bzero,\Sigma)^{n}}(\wne^{\lambdao}=\wne^{G} = [s]) + 
    \mathsf{Pr}_{\bX\sim \mathcal{N}(\bzero,\Sigma)^{n}}(\wne^{\lambdao} =[s]\wedge \wne^{G}=[s+1]).\numberthis\label{eq:main_prob}
\end{align*}
We preview that the first term can be bounded by $O\big(2^s\cdot \big( p^{-\Omega(\frac{1}{\kappa(\Gamma)})} + spe^{-\Omega(\frac{n}{s^2\kappa(\Gamma)^3})}\big) \big)$ (cf. Section \ref{sec:line_case}) and the second term can be bounded by $O\big(2^s\cdot \big( p^{-\Omega(\frac{1}{\kappa(\Gamma)})} + spe^{-\Omega(\frac{n}{s^2\kappa(\Gamma)^6})}\big) \big)$ (cf. Section \ref{sec:rays_case}).
By plugging these into \eqref{eq:main_prob},
\begin{align*}
    \mathsf{Pr}_{\bX\sim \mathcal{N}(\bzero,\Sigma)^{n}}(\wne^{\lambdao} = \ne^*)
    & \leq 
    O\bigg(2^s\cdot \bigg( p^{-\Omega(\frac{1}{\kappa(\Gamma)})} + spe^{-\Omega(\frac{n}{s^2\kappa(\Gamma)^3})}\bigg) \bigg) + O\bigg(2^s\cdot \bigg( p^{-\Omega(\frac{1}{\kappa(\Gamma)})} + spe^{-\Omega(\frac{n}{s^2\kappa(\Gamma)^6})}\bigg) \bigg) \\ 
    & \leq
    O\bigg(2^s\cdot \bigg( p^{-\Omega(\frac{1}{\kappa(\Gamma)})} + spe^{-\Omega(\frac{n}{s^2\kappa(\Gamma)^6})}\bigg) \bigg)
\end{align*}
we finish the proof of Theorem \ref{thm:main}.

\subsection{Case of $\wne^{\lambdao}=\wne^{G} = [s]$} \label{sec:line_case}

In this case, the main idea is to argue that there exists a line passing through $\wtheta^{\lambdao}$ such that any point in the intersection of a small neighborhood of $\wtheta^{\lambdao}$ and this line is also a Lasso solution for some penalty $\lambda$.

To help the construction of this line, we first define the following notations.
For any set of $n$ samples $\bX$, let $\wGamma$ be the $(p-1)$-by-$(p-1)$ matrix that $(r,c)$-entry is $\frac{1}{n}\inner{\bX_r}{\bX_c}$ for any $r,c\in[p-1]$ and $\wGamma_{[s]}$ be its $s$-by-$s$ submatrix where the indices are in $[s]$.
Also, let $q_{[s]}$ be the $s$-dimensional vector that the $i$-th entry is $\sign(\wtheta_i^{\lambdao})$ for $i\in[s]$.
We claim that the following line $L$ satisfies the above condition.
\begin{align*}
    L
    & :=
    \setdef{\wtheta^{\lambdao} + \delta \theta'}{ \delta \in\mathbb{R}}
\end{align*}
where $\theta'$ is the vector such that the $i$-the entry of $\theta'$ is 
\begin{align*}
    \theta'_i 
    & = 
    \begin{cases}
    -(\wGamma_{[s]}^{-1} q_{[s]})_i & \text{for $i\in [s]$} \\
    0 & \text{for $i\notin [s]$.}
    \end{cases} \numberthis \label{eq:theta_prime}
\end{align*}

The following lemma suggests that any $\theta$ in the intersection of a small neighborhood of $\wtheta^{\lambdao}$  and the line $L$ is also a Lasso solution for some penalty $\lambda>0$.
\begin{restatable}{lemma}{linelasso}\label{lem:line_lasso}
    For a sufficiently small $\abs{\delta}$, suppose $\widetilde{\theta}=\wtheta^{\lambdao}+\delta \theta'\in L$ where $\theta'$ is defined in \eqref{eq:theta_prime}.
    Then, $\widetilde{\theta}$ is a Lasso solution for some penalty $\lambda>0$.
\end{restatable}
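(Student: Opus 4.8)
The plan is to verify directly that $\widetilde{\theta} = \wtheta^{\lambdao} + \delta\theta'$ satisfies the KKT conditions of Lemma~\ref{lem:subgradient} for a suitably chosen penalty $\lambda = \lambda(\delta)$, provided $\abs{\delta}$ is small enough. Since we are in the case $\wne^{\lambdao} = \wne^G = [s]$, the support of $\wtheta^{\lambdao}$ is exactly $[s]$, and by construction $\theta'$ is supported on $[s]$ as well, so $\widetilde{\theta}$ is supported on (a subset of) $[s]$ too. I would split the verification into the ``on-support'' coordinates $i \in [s]$ and the ``off-support'' coordinates $i \notin [s]$, and throughout use the linearity of $\theta \mapsto G_{\bX,i}(\theta)$, which gives $G_{\bX,i}(\widetilde{\theta}) = G_{\bX,i}(\wtheta^{\lambdao}) - \delta \sum_{j=1}^{p-1}\theta'_j (\wGamma)_{i,j} = G_{\bX,i}(\wtheta^{\lambdao}) - \delta (\wGamma_{i,[s]}\,\theta'_{[s]})$, since $\theta'$ vanishes off $[s]$.

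First, for the on-support coordinates $i \in [s]$: by the choice $\theta'_{[s]} = -\wGamma_{[s]}^{-1} q_{[s]}$ (which requires $\wGamma_{[s]}$ invertible — true almost surely when $n > s$, as is implicit), we get $\wGamma_{[s],[s]}\,\theta'_{[s]} = -q_{[s]}$, hence $G_{\bX,i}(\widetilde{\theta}) = G_{\bX,i}(\wtheta^{\lambdao}) + \delta\, (q_{[s]})_i = \sign(\wtheta^{\lambdao}_i)\lambdao + \delta\,\sign(\wtheta^{\lambdao}_i) = \sign(\wtheta^{\lambdao}_i)(\lambdao + \delta)$, using that $G_{\bX,i}(\wtheta^{\lambdao}) = \sign(\wtheta^{\lambdao}_i)\lambdao$ from the KKT conditions for $\wtheta^{\lambdao}$. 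So I would set $\lambda := \lambdao + \delta > 0$ (valid once $\abs{\delta} < \lambdao$), and then the first KKT equation reads $G_{\bX,i}(\widetilde{\theta}) = \sign(\wtheta^{\lambdao}_i)\lambda$. It remains to check $\sign(\widetilde{\theta}_i) = \sign(\wtheta^{\lambdao}_i)$: since $\wtheta^{\lambdao}_i \neq 0$ for $i \in [s]$, by continuity $\widetilde{\theta}_i = \wtheta^{\lambdao}_i + \delta\theta'_i$ has the same sign for all $\abs{\delta} < \min_{i\in[s]}\abs{\wtheta^{\lambdao}_i}/(1 + \norm{\theta'}_\infty)$ (or any such explicit threshold). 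This confirms the first branch of \eqref{eq:kkt} at $\widetilde{\theta}$ with penalty $\lambda$.

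Second, for the off-support coordinates $i \notin [s]$: here $\widetilde{\theta}_i = 0$, so I need $\abs{G_{\bX,i}(\widetilde{\theta})} \leq \lambda = \lambdao + \delta$. Again by linearity, $G_{\bX,i}(\widetilde{\theta}) = G_{\bX,i}(\wtheta^{\lambdao}) - \delta(\wGamma_{i,[s]}\theta'_{[s]})$. Since $i \notin \wne^G$ (as $\wne^G = [s]$), we have the \emph{strict} inequality $\abs{G_{\bX,i}(\wtheta^{\lambdao})} < \lambdao$; write $\eta := \lambdao - \max_{i \notin [s]} \abs{G_{\bX,i}(\wtheta^{\lambdao})} > 0$. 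Then $\abs{G_{\bX,i}(\widetilde{\theta})} \leq \lambdao - \eta + \abs{\delta}\cdot \max_{i\notin[s]}\abs{\wGamma_{i,[s]}\theta'_{[s]}} \leq \lambdao - \eta + C\abs{\delta}$ for the finite constant $C = \max_{i\notin[s]}\abs{\wGamma_{i,[s]}\theta'_{[s]}}$. Choosing $\abs{\delta}$ small enough that $C\abs{\delta} \leq \eta - \abs{\delta}$, i.e. $\abs{\delta} \leq \eta/(C+1)$, gives $\abs{G_{\bX,i}(\widetilde{\theta})} \leq \lambdao - \abs{\delta} \leq \lambdao + \delta = \lambda$, which is the second branch of \eqref{eq:kkt}. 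Taking $\abs{\delta}$ below the minimum of the three thresholds above (each strictly positive, using invertibility of $\wGamma_{[s]}$, nondegeneracy of the nonzero entries of $\wtheta^{\lambdao}$, and strictness of $\abs{\wne^G} = s$), all KKT conditions hold, so $\widetilde{\theta} = \wtheta^{\lambda}$ by Lemma~\ref{lem:subgradient}, completing the proof. The only mild subtlety — not really an obstacle — is being careful with the off-support bound: because $\lambda$ itself moves with $\delta$, one must ensure the perturbation in $G_{\bX,i}$ does not outpace the change in the threshold, which is exactly why the strict gap $\eta > 0$ is needed and why we constrain $\abs{\delta} \leq \eta/(C+1)$ rather than just $\abs{\delta} \leq \eta/C$.
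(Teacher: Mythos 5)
Your proposal is correct and follows essentially the same route as the paper's proof: verify the KKT conditions of Lemma~\ref{lem:subgradient} at $\widetilde{\theta}$ with penalty $\lambda=\lambdao+\delta$, using $\wGamma_{[s]}\theta'_{[s]}=-q_{[s]}$ to get $G_{\bX,i}(\widetilde{\theta})=\sign(\wtheta^{\lambdao}_i)(\lambdao+\delta)$ on $[s]$ and the strict gap $\abs{G_{\bX,i}(\wtheta^{\lambdao})}<\lambdao$ off $[s]$ to absorb the $O(\abs{\delta})$ perturbation. Your explicit treatment of the sign consistency of $\widetilde{\theta}_i$ on $[s]$ and of the moving threshold (the $\eta/(C+1)$ choice) only makes more precise what the paper subsumes under ``sufficiently small $\abs{\delta}$.''
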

The proof of this lemma can be found in Section \ref{sec:line_lasso}.

The main idea is to use Lemma \ref{lem:subgradient} and check the KKT conditions.
Since $\wtheta^{\lambdao}$ by definition is a Lasso solution for $\lambdao$, $\wtheta^{\lambdao}$ satisfies \eqref{eq:kkt} or we have
\begin{align*}
    \frac{1}{n}\inner{\bX_p - \sum_{j=1}^{p-1}\wtheta_j^{\lambdao}\bX_j}{\bX_i} & = \sign(\wtheta^{\lambdao}_i)\lambdao & \text{for $i\in[s]$.}
\end{align*}
Obviously, by the assumption of $\wne^{\lambdao}=[s]$, it also satisfies
\begin{align*}
    \wtheta^{\lambdao}_i &= 0  & \text{for $i\notin[s]$.}
\end{align*}
If we consider the following system of linear equations with $\theta\in\mathbb{R}^{p-1}$ and $\lambda\in\mathbb{R}$ as variables
\begin{align*}
    \frac{1}{n}\inner{\bX_p - \sum_{j=1}^{p-1}\theta_j\bX_j}{\bX_i} & = \sign(\wtheta^{\lambdao}_i)\lambda & \text{for $i\in[s]$} \\
    \theta_i & = 0 & \text{for $i\notin[s]$,}
\end{align*}
we can check that $(\wtheta^{\lambdao}+\delta \theta', \lambdao+\delta)$ satisfies this system of linear equations for all $\delta\in\mathbb{R}$.
Furthermore, for a sufficiently small $\abs{\delta}$, it satisfies \eqref{eq:kkt} in Lemma \ref{lem:subgradient}.
Hence, Lemma \ref{lem:line_lasso} follows.

Recall that, by Lemma \ref{lem:o_ellipsoid}, all points inside the ellipsoid $\mathcal{E}$ cannot be a Lasso solution for any penalty.
If the line $L$ is not a tangent of $\mathcal{E}$ at $\wtheta^{\lambdao}$, there exists a point inside $\mathcal{E}$ such that it is in the intersection of a neighborhood of $\wtheta^{\lambdao}$ and the line $L$.
It contradicts Lemma \ref{lem:o_ellipsoid} and hence $L$ has to be the tangent of $\mathcal{E}$ at $\wtheta^{\lambdao}$.
Note that the normal vector of the tangent space of $\mathcal{E}$ at $\wtheta^{\lambdao}$ is $\Gamma(\wtheta^{\lambdao} - \theta^*)$.
Then, $L$ is the tangent line of $\mathcal{E}$ at $\wtheta^{\lambdao}$ if and only if ${\theta'}^\top\Gamma(\wtheta^{\lambdao} - \theta^*) = 0$.
In other words, we have 
\begin{align*}
    \wne^{\lambdao} = \wne^G = [s]
    & \implies
    \theta'\Gamma(\wtheta^{\lambdao} - \theta^*) = 0.
\end{align*}
Let $\Dtheta$ be $\wtheta^{\lambdao} - \theta^*$.
Recall that if $i\notin[s]$ then $\Dtheta_i=0$ from the event $\wne^{\lambdao} = [s]$ and $\theta'_i= (\wGamma^{-1}q)_i=0$ from the construction in \eqref{eq:theta_prime}.
Namely, we can rewrite the expression $\theta'\Gamma \Dtheta$ as 
\begin{align*}
    \theta'\Gamma \Dtheta
    & =
    q_{[s]}^\top \wGamma_{[s]}^{-1} \Gamma_{[s]} \Dtheta_{[s]}.
\end{align*}
By Lemma \ref{lem:almost_tangent}, we have 
\begin{align*}
    \mathsf{Pr}_{\bX\sim \mathcal{N}(\bzero,\Sigma)^{n}}(\wne^{\lambdao} = [s] \wedge q_{[s]}^\top \wGamma_{[s]}^{-1} \Gamma_{[s]} \Dtheta_{[s]} = 0)
    & \leq
    O\bigg(2^s\cdot \bigg( p^{-\Omega(\frac{1}{\kappa(\Gamma)})} + spe^{-\Omega(\frac{n}{s^2\kappa(\Gamma)^3})}\bigg) \bigg)
\end{align*}
Note that the event $q_{[s]}^\top \wGamma_{[s]}^{-1} \Gamma_{[s]} \Dtheta_{[s]} = 0$ is more restrictive and indeed implies $\abs{q_{[s]}\wGamma_{[s]}^{-1} \Gamma_{[s]} \Dtheta_{[s]}}\leq \frac{1}{100\sqrt{s\sigma_{\max}(\Gamma)}}\sqrt{{\Dtheta_{[s]}}^\top\Gamma_{[s]}\Dtheta_{[s]}}$.
In other words, we have
\begin{align*}
    \mathsf{Pr}_{\bX\sim \mathcal{N}(\bzero,\Sigma)^{n}}(\wne^{\lambdao} = \wne^G = [s])
    & <
    O\bigg(2^s\cdot \bigg( p^{-\Omega(\frac{1}{\kappa(\Gamma)})} + spe^{-\Omega(\frac{n}{s^2\kappa(\Gamma)^3})}\bigg) \bigg).
\end{align*}

\subsection{Case of $\wne^{\lambdao} =[s]\wedge \wne^{G}=[s+1]$} \label{sec:rays_case}

In this case, the main idea is to argue that there exist two rays shooting from $\wtheta^{\lambdao}$ such that any point in the intersection of a small neighborhood of $\wtheta^{\lambdao}$ and these two rays is also a Lasso solution for some penalty $\lambda$.

To help the construction of this line, we define the following notations similar to the notations in last subsection.
For any set of $n$ samples $\bX$, let $\wGamma$ be the $(p-1)$-by-$(p-1)$ matrix that $(r,c)$-entry is $\frac{1}{n}\inner{\bX_r}{\bX_c}$ for any $r,c\in[p-1]$ and $\wGamma_{[s+1]}$ be its $(s+1)$-by-$(s+1)$ submatrix where the indices are in $[s+1]$.
Also, let $q_{[s+1]}$ be the $s$-dimensional vector that the $i$-th entry is $\sign(G_{\bX,i}(\wtheta^{\lambdao}))$ for $i\in[s+1]$ where $G_{\bX,i}$ is defined in \eqref{eq:g_def}.
Note that $q_i = \sign(\wtheta^{\lambdao}_i)$ when $i\in[s]$.
We claim that the following rays $R_1,R_2$ satisfy the above condition.
\begin{align*}
    R_1 
    :=
    \setdef{\wtheta^{\lambdao} + \delta\cdot\sign(Q) \theta'}{ \delta \geq 0}
\end{align*}
and
\begin{align*}
    R_2
    :=
    \setdef{\wtheta^{\lambdao} + \delta\cdot\sign(Q) \theta''}{\delta <0}
\end{align*}
where 
\begin{align*}
    \theta''_i
    & =
    \begin{cases}
        -(\wGamma_{[s+1]}^{-1}q_{[s+1]})_i & \text{for $i\in[s+1]$} \\
        0 & \text{for $i\notin[s+1]$}
    \end{cases} \numberthis\label{eq:theta_dprime}
\end{align*}
and
\begin{align*}
    Q
    =
    1-q_{s+1}\sum_{j=1}^{s}\theta'_j\cdot \frac{1}{n}\inner{\bX_j}{\bX_{s+1}}. \numberthis\label{eq:q_def}
\end{align*}
Recall that $\theta'$ is defined in \eqref{eq:theta_prime}.
Here we abuse the notation that $\sign(Q)=+1$ even when $Q=0$.

The following lemma shows that any $\theta$ in the intersection of a small neighborhood of $\wtheta^{\lambdao}$ and the union of these two rays $R_1\cup R_2$ is also a Lasso solution for some penalty $\lambda>0$.

\begin{restatable}{lemma}{rayslasso}\label{lem:rays_lasso}
For a sufficiently small $\abs{\delta}$, suppose  $\widetilde{\theta}=\wtheta^{\lambdao} + \delta\cdot \sign(Q) \theta' \in R_1$ for $\delta\geq 0$ and $\widetilde{\theta}=\wtheta^{\lambdao} + \delta \cdot \sign(Q)\theta'' \in R_2$ for $\delta<0$ where $\theta'$ and $\theta''$ are defined in \eqref{eq:theta_prime} and \eqref{eq:theta_dprime} respectively.
Then, $\widetilde{\theta}$ is a Lasso solution for some penalty $\lambda>0$.
\end{restatable}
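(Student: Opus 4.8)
The plan is to verify the KKT characterization of Lemma~\ref{lem:subgradient}. Given $\widetilde\theta$ on $R_1$ or $R_2$ close to $\wtheta^{\lambdao}$, I will produce a matching penalty $\lambda=\lambda(\delta)>0$, namely $\lambda=\lambdao+\delta\,\sign(Q)$, and check the two branches of \eqref{eq:kkt} coordinate by coordinate; since each $G_{\bX,i}(\widetilde\theta)$ is affine in $\delta$, this is a first-order computation in $\delta$.

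On $R_1$, write $\widetilde\theta=\wtheta^{\lambdao}+\delta\,\sign(Q)\,\theta'$ with $\theta'$ as in \eqref{eq:theta_prime} and $\delta\ge 0$ small. For $i\in[s]$, since $\wtheta^{\lambdao}_i\neq 0$ we have $\widetilde\theta_i\neq 0$ and $\sign(\widetilde\theta_i)=\sign(\wtheta^{\lambdao}_i)$ for $\abs{\delta}$ small, and because $\theta'$ was built to solve the linear system \eqref{eq:line:system}---exactly as in Lemma~\ref{lem:line_lasso}---one gets $G_{\bX,i}(\widetilde\theta)=\sign(\widetilde\theta_i)\lambda$ with $\lambda=\lambdao+\delta\,\sign(Q)>0$. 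For $i\notin[s+1]$, $\widetilde\theta_i=0$ (as $\theta'_i=0$) and $\abs{G_{\bX,i}(\wtheta^{\lambdao})}<\lambdao$ strictly (since $i\notin\wne^G$), so $\abs{G_{\bX,i}(\widetilde\theta)}<\lambda$ for $\abs{\delta}$ small by continuity. The only delicate index is $i=s+1$, which lies on the KKT boundary ($\widetilde\theta_{s+1}=0$ but $\abs{G_{\bX,s+1}(\wtheta^{\lambdao})}=\lambdao$), so continuity is not enough. Here I would expand
\[
G_{\bX,s+1}(\widetilde\theta)=G_{\bX,s+1}(\wtheta^{\lambdao})-\delta\,\sign(Q)\sum_{j\in[s]}\theta'_j\,\tfrac1n\inner{\bX_j}{\bX_{s+1}},
\]
substitute the definition \eqref{eq:q_def} of $Q$, and show that $\abs{G_{\bX,s+1}(\widetilde\theta)}\le\lambda$ for small $\delta\ge 0$ reduces to a single scalar inequality whose validity is governed by the sign of $Q$; the factor $\sign(Q)$ in the definition of $R_1$ is precisely what makes this hold on the half-line $\delta\ge 0$ (equivalently, it selects the one of the two rays of the line $L$ on which the Lemma~\ref{lem:line_lasso} argument survives at coordinate $s+1$).

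On $R_2$, write $\widetilde\theta=\wtheta^{\lambdao}+\delta\,\sign(Q)\,\theta''$ with $\theta''$ as in \eqref{eq:theta_dprime} and $\delta<0$ small, and again take $\lambda=\lambdao+\delta\,\sign(Q)$ ($>0$ for $\abs{\delta}$ small). The indices $i\in[s]$ and $i\notin[s+1]$ are treated exactly as before, now using that $\theta''$ solves the system \eqref{eq:ray:system} on $[s+1]$ (for $i\in[s]$ recall $\sign(\wtheta^{\lambdao}_i)=\sign(G_{\bX,i}(\wtheta^{\lambdao}))$). The difference is that $s+1$ is now an \emph{active} coordinate: $\widetilde\theta_{s+1}=\delta\,\sign(Q)\,\theta''_{s+1}$, which is nonzero for $\delta\neq 0$ almost surely, so we need the first branch of \eqref{eq:kkt}, i.e.\ $G_{\bX,s+1}(\widetilde\theta)=\sign(\widetilde\theta_{s+1})\lambda$. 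The magnitude $\abs{G_{\bX,s+1}(\widetilde\theta)}=\lambda$ is automatic because $\theta''$ solves \eqref{eq:ray:system}, so only the sign consistency $\sign(\widetilde\theta_{s+1})=\sign(G_{\bX,s+1}(\widetilde\theta))$ remains. I would write $\theta''_{s+1}$ explicitly by block-inverting $\wGamma_{[s+1]}$ with respect to its $[s]$-block (the Schur complement being positive almost surely), express $\sign(\theta''_{s+1})$ through $Q$, and check that with the $\sign(Q)$ factor and $\delta<0$ the two signs agree---i.e.\ $R_2$ is the one of the two rays of $L'$ on which coordinate $s+1$ is sign-consistent. One should also record that the systems \eqref{eq:line:system} and \eqref{eq:ray:system} have full rank almost surely, so $L$ and $L'$ are genuinely lines and the parametrizations above make sense.

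The routine part simply reuses Lemma~\ref{lem:line_lasso}; the crux is the single boundary coordinate $s+1$. The work there is (i) the first-order-in-$\delta$ analysis---of $\abs{G_{\bX,s+1}(\widetilde\theta)}$ versus $\lambda$ along $R_1$, and of $\sign(\widetilde\theta_{s+1})$ versus $\sign(G_{\bX,s+1}(\widetilde\theta))$ along $R_2$---keeping careful track of $q_{s+1}$ and of the sign of $\delta$; and (ii) checking that the scalar $Q$ of \eqref{eq:q_def} correctly orients the two rays, so that the relevant boundary/sign condition is preserved on $R_1$ for $\delta\ge 0$ and on $R_2$ for $\delta<0$. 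This orientation bookkeeping, rather than the KKT verification itself, is where the care is required.
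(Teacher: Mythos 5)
Your proposal matches the paper's proof: both verify the KKT conditions with $\lambda=\lambdao+\delta\,\sign(Q)$, reuse the Lemma~\ref{lem:line_lasso} computation for $i\in[s]$ and the strict-inequality/continuity argument for $i\notin[s+1]$, and handle the boundary coordinate $s+1$ exactly as you outline---on $R_1$ the expansion plus the definition of $Q$ gives $\abs{G_{\bX,s+1}(\widetilde\theta)}=\lambda-\abs{Q}\delta\le\lambda$, and on $R_2$ the block inversion of $\wGamma_{[s+1]}$ (positive Schur complement) gives $\theta''_{s+1}=-q_{s+1}Q/(\wGamma_{s+1,s+1}-\wu^\top\wGamma_{[s]}^{-1}\wu)$, so $\sign(\widetilde\theta_{s+1})=q_{s+1}$ for $\delta<0$. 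The remaining work is only to carry out the sketched computations, which go through as in the paper.
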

The proof of this lemma can be found in Section \ref{sec:rays_lasso}.

The main idea is to use Lemma \ref{lem:subgradient} and check the KKT conditions like Lemma \ref{lem:line_lasso}.
When we consider $\widetilde{\theta}\in R_1$, the analysis is the same as in Lemma \ref{lem:line_lasso} except that we need to check $R_1$ shoots in the correct direction.
When we consider $\widetilde{\theta}\in R_2$, the analysis is similar.
The key difference is that we need to check the sign of $\widetilde{\theta}_{s+1}$ matches the sign of $G_{\bX,s+1}(\widetilde{\theta})$ since $\wtheta^{\lambdao}_{s+1}=0$ which may cause a sign mismatch in the perturbation $\widetilde{\theta}$.

Recall that, by Lemma \ref{lem:o_ellipsoid}, all points inside the ellipsoid $\mathcal{E}$ cannot be a Lasso solution for any penalty.
If the rays $R_1,R_2$ are shooting inside $\mathcal{E}$, there exists a point inside $\mathcal{E}$ such that it is in the intersection of a neighborhood of $\wtheta^{\lambdao}$ and the union of two rays $R_1\cup R_2$.
It contradicts Lemma \ref{lem:o_ellipsoid} and hence $R_1,R_2$ have to not shoot into $\mathcal{E}$.
Note that the normal vector of the tangent space of $\mathcal{E}$ at $\wtheta^{\lambdao}$ is $\Gamma(\wtheta^{\lambdao} - \theta^*)$.
Also, the direction of the ray $R_1$ is $\sign(Q)\theta'$ and the direction of the ray $R_2$ is $-\sign(Q)\theta''$.
Then, $R_1,R_2$ do not shoot into $\mathcal{E}$ if and only if ${\sign(Q)\theta'}^\top\Gamma(\wtheta^{\lambdao} - \theta^*) \geq 0$ and ${-\sign(Q) \theta''}^\top\Gamma(\wtheta^{\lambdao} - \theta^*) \geq 0$.
The following lemma shows that the probability of $R_1,R_2$ not shooting into $\mathcal{E}$ is small.
In other words, we have 
\begin{align*}
    \wne^{\lambdao}=[s] \wedge \wne^{G}=[s+1] 
    & \implies
    \mathcal{A} \wedge \mathcal{B}
\end{align*}
where $\mathcal{A}$ is the event of ${\sign(Q)\theta'}^\top\Gamma(\wtheta^{\lambdao} - \theta^*) \geq 0$ and $\mathcal{B}$ is the event of ${-\sign(Q) \theta''}^\top\Gamma(\wtheta^{\lambdao} - \theta^*) \geq 0$.
By Lemma \ref{lem:rays_out}, we have
\begin{align*}
    \mathsf{Pr}_{\bX\sim \mathcal{N}(\bzero,\Sigma)^{n}}(\wne^{\lambdao} = [s] \wedge \mathcal{A} \wedge \mathcal{B})
    & \leq
    O\bigg(2^s\cdot \bigg( p^{-\Omega(\frac{1}{\kappa(\Gamma)})} + spe^{-\Omega(\frac{n}{s^2\kappa(\Gamma)^6})}\bigg) \bigg).
\end{align*}
In other words, we have
\begin{align*}
    \mathsf{Pr}_{\bX\sim \mathcal{N}(\bzero,\Sigma)^{n}}(\wne^{\lambdao}=[s] \wedge \wne^{G}=[s+1] )
    & <
    O\bigg(2^s\cdot \bigg( p^{-\Omega(\frac{1}{\kappa(\Gamma)})} + spe^{-\Omega(\frac{n}{s^2\kappa(\Gamma)^6})}\bigg) \bigg).
\end{align*}

\section{Proof of Theorem \ref{thm:cv}}

We shorten $\lambdacv^\bX$ to be $\lambdacv$ if there is no ambiguity.

For any $\lambda>0$, define 
\begin{align*}
    Q(\lambda,\bX)
    & :=
    \frac{1}{2}\E_{Y\sim \mathcal{N}(\bzero,\Sigma)}(Y_p - \sum_{j=1}^{p-1}\wtheta^{\lambda,\bX}_jY_j)^2.
\end{align*}
We observe that, by the definition of $\lambdao$, 
\begin{align*}
    Q(\lambdao,\bX)
    & \leq
    Q(\lambdacv,\bX).
\end{align*}
If we manage to prove that
\begin{align*}
    Q(\lambdacv,\bX)
    & \leq
    Q(\lambdao,\bX) + \eps_n
\end{align*}
where $\eps_n$ is a value that $\eps_n\to 0$ as $n\to \infty$, then we can prove $\abs{\lambdacv-\lambdao} \to 0$ by the fact that $Q$ is a continuous function.

Before we go into the detail, we first state the following useful inequality.
By Chernoff bound and union bound, for any $\eps'>0$, we have
\begin{align*}
    \abs{\E(Y_iY_j) - \frac{1}{n/K}\inner{\bX^{I_k}_i}{\bX^{I_k}_j}} 
    & < 
    \eps' & \text{for all $i,j\in [p-1]$ and $k\in[K]$} \numberthis \label{eq:inner_approx}
\end{align*}
with probability $1-O(p^2Ke^{-\Omega(n\eps'^2/K\svmax(\Sigma)^2)})$.
From now on, our analysis is conditioned on \eqref{eq:inner_approx}.

For any $\lambda>0$ and any $\bZ\in\mathbb{R}^{n\times p},\bX\in \mathbb{R}^{m \times p}$, define 
\begin{align*}
    Q_{\bZ}(\lambda,\bX)
    & :=
    \frac{1}{2n}\norm{\bZ_p - \sum_{j=1}^{p-1}\wtheta^{\lambda,\bX}_j\bZ_j}^2.
\end{align*}

\begin{restatable}{lemma}{popapprox}\label{lem:pop_approx}

For any $\lambda>0$ and any $\bZ\in\mathbb{R}^{n\times p},\bX\in \mathbb{R}^{m \times p}$, we have
\begin{align*}
    \abs{Q(\lambda,\bX) - Q_\bZ(\lambda,\bX)}
    & =
    O(\eps'\cdot (1+\norm{\wtheta^{\lambda,\bX}}_1)^2).
\end{align*}

\end{restatable}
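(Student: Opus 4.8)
The plan is to expand the quadratic $Q(\lambda,\bX)$ as a quadratic form in the Lasso coefficient vector $\wtheta^{\lambda,\bX}$ with coefficients given by second moments of $Y$, expand $Q_\bZ(\lambda,\bX)$ as the \emph{same} quadratic form but with coefficients given by the empirical second moments $\frac1n\inner{\bZ_i}{\bZ_j}$, and then control the difference coefficient-by-coefficient using \eqref{eq:inner_approx}. Concretely, write $\wtheta:=\wtheta^{\lambda,\bX}$ and augment it to the $p$-vector $\bar\theta=(-\wtheta,1)$ (i.e. append a $1$ in the $p$th coordinate and negate the rest), so that
\begin{align*}
    Y_p - \sum_{j=1}^{p-1}\wtheta_j Y_j = \inner{\bar\theta}{Y}
    \quad\text{and}\quad
    Q(\lambda,\bX)=\tfrac12\,\bar\theta^\top \E[YY^\top]\,\bar\theta
    =\tfrac12\,\bar\theta^\top \Sigma\,\bar\theta,
\end{align*}
while $Q_\bZ(\lambda,\bX)=\tfrac12\,\bar\theta^\top \widehat\Sigma_\bZ\,\bar\theta$, where $(\widehat\Sigma_\bZ)_{ij}=\frac1n\inner{\bZ_i}{\bZ_j}$. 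Subtracting,
\begin{align*}
    \abs{Q(\lambda,\bX)-Q_\bZ(\lambda,\bX)}
    =\tfrac12\,\abs{\bar\theta^\top(\Sigma-\widehat\Sigma_\bZ)\bar\theta}
    \le \tfrac12\sum_{i,j=1}^{p}\abs{\bar\theta_i}\,\abs{\bar\theta_j}\,\abs{\Sigma_{ij}-(\widehat\Sigma_\bZ)_{ij}}.
\end{align*}

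Next I would invoke \eqref{eq:inner_approx} (which, note, is precisely the event on which the whole proof of Theorem~\ref{thm:cv} is conditioned, and which also covers the held-out matrix $\bZ$ of any size $n$ via the same Chernoff-plus-union-bound argument) to get $\abs{\Sigma_{ij}-(\widehat\Sigma_\bZ)_{ij}}<\eps'$ for all $i,j$. Plugging this in,
\begin{align*}
    \abs{Q(\lambda,\bX)-Q_\bZ(\lambda,\bX)}
    \le \tfrac12\,\eps'\Big(\sum_{i=1}^p \abs{\bar\theta_i}\Big)^2
    =\tfrac12\,\eps'\,\norm{\bar\theta}_1^2
    =\tfrac12\,\eps'\,(1+\norm{\wtheta^{\lambda,\bX}}_1)^2,
\end{align*}
since the appended coordinate contributes exactly $1$ to the $\ell_1$ norm. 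This is $O(\eps'(1+\norm{\wtheta^{\lambda,\bX}}_1)^2)$, as claimed, and the argument is uniform in $\lambda$ and in the dimensions $n,m$ of $\bZ,\bX$ because the only probabilistic input is \eqref{eq:inner_approx}.

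I do not expect a genuine obstacle here; the lemma is essentially a bookkeeping step. The one point requiring mild care is making sure \eqref{eq:inner_approx} as stated actually bounds the entrywise deviation for the particular matrix $\bZ$ appearing in the lemma — in the intended application $\bZ$ is one of the held-out folds $\bX^{I_k}$, so it is literally covered by \eqref{eq:inner_approx}; for a generic $\bZ$ one simply notes that the same concentration bound applies with $n$ replaced by the number of rows of $\bZ$, absorbing the change into $\eps'$ and the (implicit) failure probability. A second minor point is the indexing convention: $\Sigma$ is $p\times p$ with the target node in position $p$, so the quadratic-form identity $Q(\lambda,\bX)=\frac12\bar\theta^\top\Sigma\bar\theta$ uses exactly the block structure \eqref{eq:sigma_def}, and no rearrangement is needed. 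Everything else is the triangle inequality and $\sum_i\abs{\bar\theta_i}\abs{\bar\theta_j}=\norm{\bar\theta}_1^2$.
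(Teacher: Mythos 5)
Your proposal is correct and is essentially the same argument as the paper's: both expand $Q(\lambda,\bX)$ and $Q_\bZ(\lambda,\bX)$ as the same quadratic form in $\wtheta^{\lambda,\bX}$ (the paper writes out the cross terms, you package them via the augmented vector $\bar\theta$), bound each second-moment discrepancy by $\eps'$ using the conditioning event \eqref{eq:inner_approx}, and collect terms into $\eps'(1+\norm{\wtheta^{\lambda,\bX}}_1)^2$. Your side remark about \eqref{eq:inner_approx} covering the particular $\bZ$ at hand matches how the paper implicitly uses the same conditioning.
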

The proof of this lemma can be found in Section \ref{sec:pop_approx}.

\begin{restatable}{lemma}{thetabound}\label{lem:theta_bound}
For any $\lambda>0$ and any matrix $\bX\sim\mathcal{N}(\bzero,\Sigma)$, we have
\begin{align*}
    \norm{\wtheta^{\lambda,\bX}}_1
    & \leq
    O(\sqrt{p}\kappa(\Sigma))
\end{align*}
with probability $1-O(e^{-\Omega(n)})$.
Recall that $\kappa(\Sigma)$ is the condition number of $\Sigma$.
\end{restatable}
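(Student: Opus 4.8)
The idea is to bound $\norm{\wtheta^{\lambda,\bX}}_1$ \emph{uniformly over all $\lambda>0$} by the $\ell_1$-norm of the ordinary least-squares coefficient vector, and then control that by elementary random-matrix concentration. Throughout, let $\bX_{\backslash p}\in\mathbb{R}^{n\times(p-1)}$ be the submatrix of $\bX$ formed by its first $p-1$ columns, and set $\wtheta^{0}:=\bX_{\backslash p}^{+}\bX_p$, which minimizes the unpenalized loss $\theta\mapsto\tfrac{1}{2n}\norm{\bX_p-\bX_{\backslash p}\theta}_2^2$; when $n>p-1$ the design $\bX_{\backslash p}$ has full column rank almost surely, so this is the unique least-squares solution. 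We work in the regime $n\ge Cp$ for a suitable absolute constant $C$, which is the relevant one since $n\to\infty$ with $p$ fixed in Theorem~\ref{thm:cv}.

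\textbf{Step 1 (reduction to OLS, uniformly in $\lambda$).} For any fixed $\lambda>0$, comparing the Lasso objective \eqref{eq:nhbdlasso} at $\wtheta^{\lambda,\bX}$ and at $\wtheta^{0}$ gives
\[
\tfrac{1}{2n}\norm{\bX_p-\bX_{\backslash p}\wtheta^{\lambda,\bX}}_2^2+\lambda\norm{\wtheta^{\lambda,\bX}}_1\ \le\ \tfrac{1}{2n}\norm{\bX_p-\bX_{\backslash p}\wtheta^{0}}_2^2+\lambda\norm{\wtheta^{0}}_1,
\]
while $\wtheta^{0}$ minimizes the squared loss, so $\tfrac{1}{2n}\norm{\bX_p-\bX_{\backslash p}\wtheta^{0}}_2^2\le \tfrac{1}{2n}\norm{\bX_p-\bX_{\backslash p}\wtheta^{\lambda,\bX}}_2^2$. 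Subtracting and dividing by $\lambda>0$ yields $\norm{\wtheta^{\lambda,\bX}}_1\le\norm{\wtheta^{0}}_1$, and since the right-hand side does not depend on $\lambda$ this holds for all $\lambda>0$ on the same event. Thus it suffices to bound $\norm{\wtheta^{0}}_1$.

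\textbf{Step 2 (from $\ell_1$ to singular values, and concentration).} Since $\wtheta^{0}\in\mathbb{R}^{p-1}$ and $\wtheta^{0}=\bX_{\backslash p}^{+}\bX_p$ with $\norm{\bX_{\backslash p}^{+}}_{\mathrm{op}}=1/\sigma_{\min}(\bX_{\backslash p})$, we get
\[
\norm{\wtheta^{\lambda,\bX}}_1\ \le\ \sqrt{p-1}\,\normtwo{\wtheta^{0}}\ \le\ \sqrt{p-1}\,\frac{\normtwo{\bX_p}}{\sigma_{\min}(\bX_{\backslash p})}\ =\ \sqrt{p-1}\,\frac{\normtwo{\bX_p}}{\sqrt{n\,\svmin(\wGamma)}},
\]
using $\sigma_{\min}(\bX_{\backslash p})^2=n\,\svmin(\wGamma)$ for $\wGamma$ the sample Gram matrix of the first $p-1$ columns. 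For the numerator, $\normtwo{\bX_p}^2=\sum_{k=1}^n \bX_{kp}^2\sim \Sigma_{pp}\,\chi^2_n$, so $\chi^2$ tail bounds give $\normtwo{\bX_p}^2\le 2n\,\svmax(\Sigma)$ with probability $1-e^{-\Omega(n)}$. For the denominator, write the $k$th row of $\bX_{\backslash p}$ as $\Gamma^{1/2}g_k$ with $g_k\sim\mathcal{N}(\bzero,I_{p-1})$ i.i.d., so $\wGamma=\Gamma^{1/2}\widehat{W}\Gamma^{1/2}$ with $\widehat{W}$ the sample covariance of $n$ standard Gaussians; the Davidson--Szarek bound gives $\svmin(\widehat{W})\ge(1-\sqrt{(p-1)/n}-t)^2$ with probability $1-e^{-nt^2/2}$, so taking $n\ge 16(p-1)$ and $t=\tfrac14$ yields $\svmin(\widehat{W})\ge\tfrac14$ with probability $1-e^{-\Omega(n)}$, whence $\svmin(\wGamma)\ge\tfrac14\svmin(\Gamma)\ge\tfrac14\svmin(\Sigma)$, the last step by Cauchy interlacing since $\Gamma$ is a principal submatrix of $\Sigma$.

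\textbf{Step 3 (combine).} On the intersection of the two events above, which has probability $1-e^{-\Omega(n)}$, we obtain for every $\lambda>0$
\[
\norm{\wtheta^{\lambda,\bX}}_1\ \le\ \sqrt{p-1}\cdot\frac{\sqrt{2n\,\svmax(\Sigma)}}{\sqrt{n\cdot\tfrac14\svmin(\Sigma)}}\ =\ 2\sqrt{2}\,\sqrt{p-1}\,\sqrt{\kappa(\Sigma)}\ =\ O\big(\sqrt{p}\,\kappa(\Sigma)\big),
\]
as claimed. The only nontrivial ingredient is the lower bound on $\svmin(\wGamma)$: this is where non-asymptotic random-matrix theory enters and where the hypothesis $n\gtrsim p$ is genuinely needed (for $n$ comparable to $p$ the smallest singular value of a Gaussian matrix is only of order $1/\sqrt{n}$). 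Everything else is the short deterministic comparison in Step~1 — which is exactly what makes the bound hold simultaneously over $\lambda$ without any appeal to the Lasso solution path — together with a standard $\chi^2$ tail.
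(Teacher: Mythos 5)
Your proof is correct, and its core reduction is the same as the paper's: both arguments first bound $\norm{\wtheta^{\lambda,\bX}}_1$ by the $\ell_1$-norm of the unpenalized ($\lambda=0$) solution, uniformly in $\lambda$ (a step the paper only asserts with ``it is easy to see''; your objective-comparison argument in Step~1 is exactly the missing justification). Where you diverge is in how the OLS norm is controlled. The paper writes $\wtheta^{0,\bX}=\wGamma^{-1}\wv$, bounds $\norm{\wGamma^{-1}\wv}_1\le\sqrt{p}\,\norm{\wGamma^{-1}}_2\norm{\wv}_2$, and applies matrix Chernoff-type concentration to both $\norm{\wGamma-\Gamma}_2$ and $\norm{\wv-v}_2$, arriving at $O(\sqrt{p}\,\norm{\Gamma^{-1}}_2\norm{v}_2)=O(\sqrt{p}\,\kappa(\Sigma))$. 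You instead bound $\normtwo{\wtheta^{0}}\le\normtwo{\bX_p}/\sigma_{\min}(\bX_{\backslash p})$ and control the numerator by a $\chi^2$ tail and the denominator by Davidson--Szarek plus interlacing; this avoids having to concentrate $\wv$ around $v$ altogether and yields the slightly sharper $O(\sqrt{p}\,\sqrt{\kappa(\Sigma)})$, which of course implies the stated bound since $\kappa(\Sigma)\ge 1$. Both routes genuinely require $n\gtrsim p$ to get the $1-O(e^{-\Omega(n)})$ probability (the paper's relative bound $\norm{\wGamma-\Gamma}_2<O(\norm{\Gamma}_2)$ with a small enough constant needs it too); you state this hypothesis explicitly and correctly note it is the relevant regime for Theorem~\ref{thm:cv}, whereas the paper leaves it implicit.
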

The proof of this lemma can be found in Section \ref{sec:theta_bound}.

By Lemma \ref{lem:pop_approx} with $\lambda=\lambdacv$ and $\bZ=\bX$, we first have
\begin{align*}
    Q(\lambdacv,\bX)
    & \leq
    Q_\bX(\lambdacv,\bX) + O(\eps'\cdot(1+\norm{\wtheta^{\lambdacv,\bX}}_1)^2).
\end{align*}
Furthermore, by Lemma \ref{lem:theta_bound}, we have
\begin{align*}
    Q(\lambdacv,\bX)
    & \leq
    Q_\bX(\lambdacv,\bX) + O(\eps'p\kappa(\Sigma)^2). \numberthis \label{eq:cv_first_ineq}
\end{align*}

For the term $Q_\bX(\lambdacv,\bX)$, we further have
\begin{align*}
    \MoveEqLeft Q_\bX(\lambdacv,\bX) \\
    & =
    Q_\bX(\lambdacv,\bX) + \lambdacv\norm{\wtheta^{\lambdacv,\bX}}_1 - \lambdacv\norm{\wtheta^{\lambdacv,\bX}}_1 \\
    & \leq
    Q_\bX(\lambdacv,\bX^{-I_k}) + \lambdacv\norm{\wtheta^{\lambdacv,\bX^{-I_k}}}_1 - \lambdacv\norm{\wtheta^{\lambdacv,\bX}}_1 \\
    & \leq
    Q_{\bX^{I_k}}(\lambdacv,\bX^{-I_k}) + O(\eps'\cdot(1+\norm{\wtheta^{\lambdacv,\bX^{-I_k}}}_1)^2)+ \lambdacv\norm{\wtheta^{\lambdacv,\bX^{-I_k}}}_1 - \lambdacv\norm{\wtheta^{\lambdacv,\bX}}_1 \\
    & \leq
    Q_{\bX^{I_k}}(\lambdacv,\bX^{-I_k}) + O(\eps'p\kappa(\Sigma)^2)+ \lambdacv(\norm{\wtheta^{\lambdacv,\bX^{-I_k}}}_1 - \norm{\wtheta^{\lambdacv,\bX}}_1) \numberthis\label{eq:main_error}
\end{align*}
for $k\in [K]$.
The penultimate inequality is due to the fact that $\wtheta^{\lambdacv,\bX} = \arg\min_{\theta\in\mathbb{R}^{p-1}} \frac{1}{2n}\norm{\bX_p - \sum_{j=1}^{p-1}\theta_j\bX_j}^2 + \lambdacv\norm{\theta}_1$ and the last inequality is due to applying Lemma \ref{lem:pop_approx} twice and triangle inequality.

In \eqref{eq:main_error}, the first term $Q_{\bX^{I_k}}(\lambdacv,\bX^{-I_k})$ is what we are looking for.
By the definition of $\lambdacv$, observe that
\begin{align*}
    \frac{1}{K}\sum_{k=1}^K Q_{\bX^{I_k}}(\lambdacv,\bX^{-I_k}) 
    & \leq
    \frac{1}{K}\sum_{k=1}^K Q_{\bX^{I_k}}(\lambdao,\bX^{-I_k}). \numberthis\label{eq:cv_def_ineq}
\end{align*}
Moreover, for each term $Q_{\bX^{I_k}}(\lambdao,\bX^{-I_k})$, we have
\begin{align*}
    \MoveEqLeft Q_{\bX^{I_k}}(\lambdao,\bX^{-I_k})\\
    & = 
    Q_{\bX^{-I_k}}(\lambdao,\bX^{-I_k}) + O(\eps'p\kappa(\Sigma)^2)\\
    & \leq
    Q_{\bX^{-I_k}}(\lambdao,\bX) + \lambdao\norm{\wtheta^{\lambdao,\bX}}_1 - \lambdao\norm{\wtheta^{\lambdao,\bX^{-I_k}}}_1 + O(\eps'p\kappa(\Sigma)^2) \\
    & \leq
    Q(\lambdao,\bX) + O(\eps'p\kappa(\Sigma)^2) + \lambdao(\norm{\wtheta^{\lambdao,\bX}}_1 - \norm{\wtheta^{\lambdao,\bX^{-I_k}}}_1)\numberthis \label{eq:cv_final_ineq}
\end{align*}

If we plug \eqref{eq:cv_final_ineq} into \eqref{eq:cv_def_ineq} and further plug it and \eqref{eq:main_error} into \eqref{eq:cv_first_ineq}, we have
\begin{align*}
    Q(\lambdacv,\bX)
    & \leq
    Q(\lambdao,\bX) + O(\eps'p\kappa(\Sigma)^2) \\
    & \qquad + 
    \lambdacv(\norm{\wtheta^{\lambdacv,\bX^{-I_k}}}_1 - \norm{\wtheta^{\lambdacv,\bX}}_1) + \lambdao\cdot\frac{1}{K}\sum_{k=1}^K(\norm{\wtheta^{\lambdao,\bX}}_1 - \norm{\wtheta^{\lambdao,\bX^{-I_k}}}_1).
\end{align*}

\begin{restatable}{lemma}{wthetalambdabound}\label{lem:wtheta_lambda_bound}

For any $\lambda>0$ and any matrices $\bX,\bZ \sim \mathcal{N}(\bzero,\Sigma)$, we have
\begin{align*}
    \abs{\norm{\wtheta^{\lambda,\bX}}_1 - \norm{\wtheta^{\lambda,\bZ}}_1}
    \leq
    O(\sqrt{\eps'\cdot \frac{p^2\kappa(\Sigma)^2}{\svmin(\Sigma)}})
    \qquad \text{and} \qquad
    \lambda 
    \leq
    O(\sqrt{p}\svmax(\Sigma)\kappa(\Sigma))
\end{align*}
as long as $\lambda$ is not too large such that $\wtheta^\lambda\neq \bzero$.
\end{restatable}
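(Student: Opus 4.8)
The plan is to establish the two estimates separately, both on the high-probability event that $|\tfrac1n\inner{\bX_i}{\bX_j}-\Sigma_{ij}|<\eps'$ and $|\tfrac1n\inner{\bZ_i}{\bZ_j}-\Sigma_{ij}|<\eps'$ for all $i,j\in[p]$ --- the same Chernoff plus union bound argument behind \eqref{eq:inner_approx} applies verbatim to the full matrices $\bX$ and $\bZ$. Writing $\wGamma^{\bX}$ for the $(p-1)$-by-$(p-1)$ matrix with entries $\tfrac1n\inner{\bX_r}{\bX_c}$ and $\wv^{\bX}_i:=\tfrac1n\inner{\bX_p}{\bX_i}$ (similarly for $\bZ$), this event gives $\norm{\wGamma^{\bX}-\Gamma}_{\max}\le\eps'$, $\norm{\wv^{\bX}-v}_\infty\le\eps'$, $|\tfrac1n\normtwo{\bX_p}^2-\Sigma_{pp}|\le\eps'$, and, via Lemma~\ref{lem:theta_bound}, $\norm{\wtheta^{\lambda,\bX}}_1,\norm{\wtheta^{\lambda,\bZ}}_1=O(\sqrt p\,\kappa(\Sigma))$. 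We assume $\eps'$ is below a threshold depending only on $p$ and $\svmin(\Sigma)$, which is automatic in the asymptotic regime in which \eqref{eq:inner_approx} is informative as $n\to\infty$.

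For the bound on $\lambda$: by the KKT characterization (Lemma~\ref{lem:subgradient}), $\wtheta^{\lambda,\bX}=\bzero$ iff $|G_{\bX,i}(\bzero)|=\tfrac1n|\inner{\bX_p}{\bX_i}|\le\lambda$ for every $i\in[p-1]$, so $\wtheta^{\lambda,\bX}\neq\bzero$ forces $\lambda<\max_i\tfrac1n|\inner{\bX_p}{\bX_i}|\le\max_i|\Sigma_{ip}|+\eps'\le\svmax(\Sigma)+\eps'$, using $|\Sigma_{ip}|\le\sqrt{\Sigma_{ii}\Sigma_{pp}}\le\svmax(\Sigma)$. This is comfortably $O(\sqrt p\,\svmax(\Sigma)\kappa(\Sigma))$.

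For the difference of $\ell_1$ norms I would exploit strong convexity. Let $f_{\bX}(\theta):=\tfrac1{2n}\normtwo{\bX_p-\sum_j\theta_j\bX_j}^2+\lambda\norm{\theta}_1$, whose smooth part has Hessian $\wGamma^{\bX}$. Since $\Gamma$ is a principal submatrix of $\Sigma$, Cauchy interlacing gives $\svmin(\Gamma)\ge\svmin(\Sigma)$, and Weyl's inequality together with $\norm{\wGamma^{\bX}-\Gamma}_{\mathrm{op}}\le(p-1)\eps'$ yields $\wGamma^{\bX}\succeq\mu I$ with $\mu:=\svmin(\Sigma)-(p-1)\eps'\ge\svmin(\Sigma)/2$; hence $f_{\bX}$ and $f_{\bZ}$ are $\mu$-strongly convex. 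The $\lambda\norm{\theta}_1$ term cancels in $f_{\bX}-f_{\bZ}$, so for $\theta\in\{\wtheta^{\lambda,\bX},\wtheta^{\lambda,\bZ}\}$ one has
\[
|f_{\bX}(\theta)-f_{\bZ}(\theta)|\le\tfrac12\norm{\wGamma^{\bX}-\wGamma^{\bZ}}_{\max}\norm{\theta}_1^2+\norm{\wv^{\bX}-\wv^{\bZ}}_\infty\norm{\theta}_1+O(\eps')=O(\eps'\,p\,\kappa(\Sigma)^2)=:\Delta,
\]
the key point being to bound the quadratic form by $\norm{\cdot}_{\max}\norm{\theta}_1^2$ rather than $\norm{\cdot}_{\mathrm{op}}\normtwo{\theta}^2$ (the latter would lose an extra factor $p$). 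Combining $\mu$-strong convexity of $f_{\bX}$ at its minimizer $\wtheta^{\lambda,\bX}$ with optimality of $\wtheta^{\lambda,\bZ}$ for $f_{\bZ}$,
\[
\tfrac{\mu}{2}\normtwo{\wtheta^{\lambda,\bX}-\wtheta^{\lambda,\bZ}}^2\le f_{\bX}(\wtheta^{\lambda,\bZ})-f_{\bX}(\wtheta^{\lambda,\bX})\le\big(f_{\bZ}(\wtheta^{\lambda,\bZ})-f_{\bZ}(\wtheta^{\lambda,\bX})\big)+2\Delta\le2\Delta,
\]
so $\normtwo{\wtheta^{\lambda,\bX}-\wtheta^{\lambda,\bZ}}=O(\sqrt{\Delta/\svmin(\Sigma)})=O(\sqrt{\eps'\,p\,\kappa(\Sigma)^2/\svmin(\Sigma)})$, and finally $|\norm{\wtheta^{\lambda,\bX}}_1-\norm{\wtheta^{\lambda,\bZ}}_1|\le\norm{\wtheta^{\lambda,\bX}-\wtheta^{\lambda,\bZ}}_1\le\sqrt{p-1}\,\normtwo{\wtheta^{\lambda,\bX}-\wtheta^{\lambda,\bZ}}=O(\sqrt{\eps'\,p^2\kappa(\Sigma)^2/\svmin(\Sigma)})$, as claimed.

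The main obstacle is the tightness of the $\ell_1$/$\ell_2$ conversions: the target scales like $p^2$ inside the square root, so there is no room for the crude $\norm{M}_{\mathrm{op}}\le p\,\norm{M}_{\max}$ when bounding $\Delta$, nor for bounding $\norm{\cdot}_1$ by $\sqrt n\,\normtwo{\cdot}$ over the (possibly $\Theta(n)$-sized) supports of $\wtheta^{\lambda,\bX},\wtheta^{\lambda,\bZ}$; the quadratic form must be handled directly in the $\ell_1$ norm. A secondary point is justifying $\mu\ge\svmin(\Sigma)/2$, which requires $\eps'$ to fall below a $p$- and $\svmin(\Sigma)$-dependent threshold --- precisely the regime in which \eqref{eq:inner_approx} carries information as $n\to\infty$.
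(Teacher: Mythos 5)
Your proof is correct and follows essentially the same route as the paper: strong convexity of the Lasso objective coming from the empirical Gram matrix, an $O(\eps' p\,\kappa(\Sigma)^2)$ comparison of the two empirical objectives obtained from the entrywise Gram error times $\norm{\theta}_1^2$ together with Lemma~\ref{lem:theta_bound}, the $\ell_2$-to-$\ell_1$ conversion costing the extra factor of $p$, and a KKT argument for the bound on $\lambda$. The only cosmetic differences are that you compare the two empirical objectives directly rather than passing through the population risk via Lemma~\ref{lem:pop_approx}, you lower-bound $\svmin(\wGamma)$ by entrywise concentration plus Weyl and interlacing (with the explicit smallness condition on $\eps'$) instead of the paper's matrix Chernoff step, and you bound $\lambda$ from the KKT conditions at $\bzero$ rather than at $\wtheta^{\lambda,\bX}$ --- all yielding the same conclusions and rates.
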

The proof of this lemma can be found in Section \ref{sec:wtheta_lambda_bound}.

By Lemma \ref{lem:wtheta_lambda_bound}, we have
\begin{align*}
    Q(\lambdacv,\bX)
    & \leq
    Q(\lambdao,\bX) + \sqrt{\eps'}\cdot \poly(p)\cdot C_\Sigma
\end{align*}
where $C_{\Sigma}$ is a constant depending only on $\Sigma$.
Taking $\eps'=\frac{1}{n^{1/10}\poly(p)\cdot C_\Sigma^2}$, we have
\begin{align*}
    Q(\lambdacv,\bX)
    & \leq
    Q(\lambdao,\bX) + \frac{1}{n^{1/20}}.
\end{align*}
In other words, we have
\begin{align*}
    \abs{Q(\lambdacv,\bX)-Q(\lambdao,\bX)}
    & <
    \frac{1}{n^{1/20}}
\end{align*}
with probability $1-O(p^2Ke^{-\Omega(\poly(n)\cdot \poly(1/p)C'_{\Sigma})})$ for some constant $C'_{\Sigma}$ depending only on $\Sigma$.
Using the fact that $Q$ is a continuous function, we can conclude our result.

\section{Omitted proofs}\label{sec:omitted_proof}

\subsection{Proof of Lemma \ref{lem:o_ellipsoid}}\label{sec:o_ellipsoid}

\oellipsoid*

\begin{proof}

Recall that the definition of $a$, $v$ and $\Gamma$ from \eqref{eq:sigma_def}.
Also, since $\Gamma$ is a positive definite matrix, there exists a matrix $H$ such that $\Gamma = HH^\top$.
For any $\theta\in\mathbb{R}^{p-1}$, we first expand $\E_{Y\sim \mathcal{N}(\bzero,\Sigma)}(Y_p - \sum_{j=1}^{p-1}\theta_jY_j)^2$ as
\begin{align*}
    \MoveEqLeft \E_{Y\sim \mathcal{N}(\bzero,\Sigma)}(Y_p - \sum_{j=1}^{p-1}\theta_jY_j)^2 \\
    & =
    \theta^\top\Gamma \theta - 2\theta^\top v + a \\
    & =
    \theta^\top \Gamma\theta - 2\theta^\top H H^{-1} v + v^\top (H^{-1})^\top H^{-1} v - v^\top (H^{-1})^\top H^{-1} v+a \\
    & =
    \norm{H^\top \theta - H^{-1} v}_2^2 + a - v^\top \Gamma^{-1} v.
\end{align*}
Recall that $\theta^* = \Gamma^{-1} v$.
By plugging it into the above equation, we have
\begin{align*}
    \E_{Y\sim \mathcal{N}(\bzero,\Sigma)}(Y_p - \sum_{j=1}^{p-1}\theta_jY_j)^2
    & =
    \norm{H^\top \theta - H^\top \theta^*}_2^2 + a - v^\top \Gamma^{-1} v \\
    & =
    (\theta- \theta^*)^\top \Gamma (\theta - \theta^*) + a - v^\top \Gamma^{-1} v \numberthis \label{eq:o_ineq}
\end{align*}

By the definition of $\lambdao$, we have the following inequality
\begin{align*}
    \E_{Y\sim \mathcal{N}(\bzero,\Sigma)}(Y_p - \sum_{j=1}^{p-1}\wtheta^{\lambdao}_jY_j)^2
    & \leq
    \E_{Y\sim \mathcal{N}(\bzero,\Sigma)}(Y_p - \sum_{j=1}^{p-1}\wtheta^{\lambda}_jY_j)^2 & \text{for any $\lambda>0$.}
\end{align*}
By \eqref{eq:o_ineq}, it implies 
\begin{align*}
    (\wtheta^{\lambdao}- \theta^*)^\top \Gamma (\wtheta^{\lambdao} - \theta^*)
    & \leq
    (\wtheta^{\lambda}- \theta^*)^\top \Gamma (\wtheta^{\lambda} - \theta^*).
\end{align*}

\end{proof}

\subsection{Proof of Lemma \ref{lem:wne_size}}\label{sec:wne_size}

\wnesize*

\begin{proof}

Suppose $\abs{\wne^G} \geq \abs{\wne^{\lambdao}} + 2$.
Without loss of generality, we assume $\wne^{\lambdao} = [s]$ for some integer $s$ and $[s+2]\subseteq \wne^G$.
Consider the following system of linear equations with $\theta\in\mathbb{R}^{p-1}$ and $\lambda\in\mathbb{R}$ as variables.
\begin{align*}
    \begin{cases}
        \frac{1}{n}\inner{\bX_p - \sum_{j=1}^{p-1}\theta_j \bX_j}{\bX_i} = \sign(G_{\bX,i}(\wtheta^{\lambdao}))\lambda & \text{for $i\in[s+2]$,} 
        \\
        \theta_i = 0 & \text{for $i\notin [s]$.}
    \end{cases} \numberthis\label{eq:main_sys}
\end{align*}
By the definition of $\wne^{\lambdao}$, $(\wtheta^{\lambdao},\lambdao)$ is a solution of this system.
It means this system of linear equations \eqref{eq:main_sys} has a solution.
Since \eqref{eq:main_sys} has a solution, the determinant of  matrix $\wGamma_{[s+2],[s]\cup\{p,*\}}$ is $0$ where $\wGamma_{[s+2],[s]\cup\{p,*\}}$ is the $(s+2)$-by-$(s+2)$ matrix that the rows are indexed by $[s+2]$ and the columns are indexed by $[s]\cup\{p,*\}$ and the $(r,c)$-entry is $\begin{cases}
    \frac{1}{n}\inner{\bX_r}{\bX_c} & \text{for $r\in[s+2]$, $c\in[s]\cup\{p\}$}\\
    \sign(G_{\bX,r}(\wtheta^{\lambdao})) & \text{for $r\in[s+2]$, $c=*$}
\end{cases}$, i.e.
\begin{align*}
    \wGamma_{[s+2],[s]\cup\{p,*\}}
    & =
    \begin{bmatrix}
        \frac{1}{n}\inner{\bX_1}{\bX_1} & \cdots & \frac{1}{n}\inner{\bX_1}{\bX_s} & \frac{1}{n}\inner{\bX_1}{\bX_p} & \sign(G_{\bX,1}(\wtheta^{\lambdao})) \\
        \vdots & \ddots & \vdots & \vdots & \vdots \\
        \frac{1}{n}\inner{\bX_{s+2}}{\bX_1} & \cdots & \frac{1}{n}\inner{\bX_{s+2}}{\bX_s} & \frac{1}{n}\inner{\bX_{s+2}}{\bX_p} & \sign(G_{\bX,s+2}(\wtheta^{\lambdao}))
    \end{bmatrix}.
\end{align*}

Now, we project $\bX_p$ onto the subspace spanned by $\{\bX_1,\bX_2,\dots,\bX_{s+2}\}$ and write the project as $\sum_{j=1}^{s+2} \alpha_j \bX_j$ for some $\alpha_j$.
Plugging it into $\det(A)=0$ and using the properties of determinant, we have
\begin{align*}
    \alpha_{s+1}\det(\wGamma_{[s+2],[s]\cup\{s+1,*\}}) + \alpha_{s+2}\det(\wGamma_{[s+2],[s]\cup\{s+2,*\}})
    & =
    0 \numberthis\label{eq:gaussian_zero}
\end{align*}
where 
\begin{align*}
    \wGamma_{[s+2],[s]\cup\{j,*\}}
    & =
    \begin{bmatrix}
        \frac{1}{n}\inner{\bX_1}{\bX_1} & \cdots & \frac{1}{n}\inner{\bX_1}{\bX_s} & \frac{1}{n}\inner{\bX_1}{\bX_j} & \sign(G_{\bX,1}(\wtheta^{\lambdao})) \\
        \vdots & \ddots & \vdots & \vdots & \vdots \\
        \frac{1}{n}\inner{\bX_{s+2}}{\bX_1} & \cdots & \frac{1}{n}\inner{\bX_{s+2}}{\bX_s} & \frac{1}{n}\inner{\bX_{s+2}}{\bX_j} & \sign(G_{\bX,s+2}(\wtheta^{\lambdao}))
    \end{bmatrix} & \text{for $j=\{s+1,s+2\}.$}
\end{align*}
Note that, conditioned on $\bX_1,\dots,\bX_{s+2}$, $\alpha_{s+1},\alpha_{s+2}$ are distributed as Gaussian if $n > s+2$.
If one of $\det(\wGamma_{[s+2],[s]\cup\{s+1,*\}})$ and $\det(\wGamma_{[s+2],[s]\cup\{s+2,*\}})$ is not zero, the expression $\alpha_{s+1}\det(\wGamma_{[s+2],[s]\cup\{s+1,*\}}) + \alpha_{s+2}\det(\wGamma_{[s+2],[s]\cup\{s+2,*\}})$ is distributed as Gaussian conditioned on $\bX_1,\dots,\bX_{s+2}$ and hence is non-zero almost surely which contradicts \eqref{eq:gaussian_zero}.
Therefore, $\abs{\wne^G} \leq \abs{\wne^{\lambdao}} + 1$.

It remains to show that $\det(\wGamma_{[s+2],[s]\cup\{s+2,*\}})$ (or $\det(\wGamma_{[s+2],[s]\cup\{s+1,*\}})$) is non-zero almost surely.
By Cramer's rule, $\frac{\det(\wGamma_{[s+2],[s]\cup\{s+2,*\}})}{\det(\wGamma_{[s+2]})}$ is the $(s+2)$-th entry of $\wGamma_{[s+2]}^{-1}q_{[s+2]}$ where $\wGamma_{[s+2]}$ is the $(s+2)$-by-$(s+2)$ matrix whose $(r,c)$-entry is $\frac{1}{n}\inner{\bX_r}{\bX_c}$ for $r,c\in[s+2]$ and $q_{[s+2]}$ is the $(s+2)$-dimensional vector whose $i$-th entry is $\sign(G_{\bX,i})$ for $i\in[s+2]$.
Note that $\det(\wGamma_{[s+2]}) \neq 0$ almost surely if $n > s+2$.
By the block matrix calculation, we have
\begin{align*}
    (\wGamma_{[s+2]}^{-1}q_{[s+2]})_{s+2}
    & =
    \frac{q_{s+2} - \wu^\top\wGamma_{[s+1]}^{-1}q_{[s+1]}}{\wGamma_{s+2,s+2} - \wu^\top\wGamma_{[s+1]}^{-1}\wu}
\end{align*}
where $\wu$ is the $(s+1)$-dimensional vector whose $i$-th entry is $\frac{1}{n}\inner{\bX_i}{\bX_{s+2}}$ for $i\in [s+1]$, $\wGamma_{[s+1]}$ is the $(s+1)$-by-$(s+1)$ submatrix of $\wGamma_{[s+2]}$ whose indices are in $[s+1]$ and $q_{[s+1]}$ is the $(s+1)$-dimensional subvector of $q_{[s+2]}$ whose indices are in $[s+1]$.
Suppose $(\wGamma_{[s+2]}^{-1}q_{[s+2]})_{s+2} = 0$.
We have
\begin{align*}
    q_{s+2} - \wu^\top\wGamma_{[s+1]}^{-1}q_{[s+1]}  = 0.\numberthis\label{eq:gaussian_zero_2}
\end{align*}
Conditioned on $\bX_1,\dots,\bX_{s+1}$, the term $\wu^\top\wGamma_{[s+1]}^{-1}q_{[s+1]}$ depends linearly on $\bX_{s+2}$ and hence it is distributed as Gaussian which is almost surely not $1$ or $-1$.
It contradicts \eqref{eq:gaussian_zero_2}.
Therefore, $\det(\wGamma_{[s+2],[s]\cup\{s+2,*\}})\neq 0$.

\end{proof}

\subsection{Proof of Lemma \ref{lem:line_lasso}}\label{sec:line_lasso}

\linelasso*

\begin{proof}

To use Lemma \ref{lem:subgradient}, we examine $G_{\bX,i}(\wtheta^{\lambdao} + \delta \theta')$ for any $i$ and we need to check the two cases of coordinates of $\wtheta^{\lambdao} + \delta \theta'$ being non-zero or not in Lemma \ref{lem:subgradient} which correspond to $i\in[s]$ and $i\notin[s]$.

We first examine it for $i\in[s]$.
By direct calculation, we have
\begin{align*}
    G_{\bX,i}(\wtheta^{\lambdao} + \delta \theta')
    & =
    \frac{1}{n}\inner{\bX_p - \sum_{j=1}^{p-1}(\wtheta^{\lambdao}_j + \delta\theta'_j)\bX_{j}}{\bX_i} 
     =
    q_i(\lambdao + \delta) & \text{for $i \in [s]$}\numberthis\label{eq:check_kkt_a}
\end{align*}
where $q_i$ is the $i$-th entry of $q_{[s]}$.

We now examine it for $i\notin[s]$.
By direct calculation, we have
\begin{align*}
    \abs{G_{\bX,i}(\wtheta^{\lambdao} + \delta\theta')}
    & =
    \abs{\frac{1}{n}\inner{\bX_p - \sum_{j=1}^{p-1}(\wtheta_j^{\lambdao}+\delta\theta_j')\bX_j}{\bX_i}} \\
    & \leq
    \abs{\frac{1}{n}\inner{\bX_p - \sum_{j=1}^{p-1}\wtheta_j^{\lambdao}\bX_j}{\bX_i}} + C\cdot\abs{\delta} \\
    & =
    \abs{G_{\bX,i}(\wtheta^{\lambdao})} + C\cdot\abs{\delta} & \text{for $i \notin [s]$}
\end{align*}
where $C = \abs{\sum_{j=1}^{p-1}\theta'_j\inner{\bX_j}{\bX_i}}$.
If we combine with the fact that $\abs{G_{\bX,i}(\wtheta^{\lambdao})} < \lambdao$ for $i\notin [s]$, for a sufficiently small $\abs{\delta}$, we have
\begin{align*}
    \abs{G_{\bX,i}(\wtheta^{\lambdao} + \delta\theta')}
    & <
    \abs{G_{\bX,i}(\wtheta^{\lambdao})} + C\cdot\abs{\delta}
    <
    \lambdao + \delta.\numberthis\label{eq:check_kkt_a2}
\end{align*}

Using Lemma \ref{lem:subgradient} with \eqref{eq:check_kkt_a} and \eqref{eq:check_kkt_a2}, we conclude that $\wtheta^{\lambdao} + \delta \theta'$ is a Lasso solution for $\lambda = \lambdao+\delta$ for a sufficiently small $\abs{\delta}$.

\end{proof}

\subsection{Proof of Lemma \ref{lem:rays_lasso}}\label{sec:rays_lasso}

\rayslasso*

\begin{proof}

To use Lemma \ref{lem:subgradient}, we examine both $G_{\bX,i}(\wtheta^{\lambdao}+\delta \cdot \sign(Q)\theta')$ when $\delta\geq 0$ and $G_{\bX,i}(\wtheta^{\lambdao}+\delta \cdot\sign(Q)\theta'')$ when $\delta<0$ for any $i$.

We first analyze $G_{\bX,i}(\wtheta^{\lambdao}+\delta\cdot \sign(Q) \theta')$ when $\delta\geq 0$ and the analysis is similar to the analysis in Lemma \ref{lem:line_lasso}.
We need to check the two cases of coordinates of $\wtheta^{\lambdao}+\delta\cdot\sign(Q) \theta'$ being non-zero or not which correspond to $i\in[s]$ and $i\notin[s]$.
For $i\in[s]$, by the same calculation as in Lemma \ref{lem:line_lasso}, we have
\begin{align*}
    G_{\bX,i}(\wtheta^{\lambdao} + \delta\cdot \sign(Q) \theta')
    & =
    q_i(\lambdao + \delta\cdot\sign(Q)) & \text{for $i \in [s]$.}\numberthis\label{eq:check_kkt_b}
\end{align*}
For $i\notin[s]$, we further split into two cases: $i\notin[s+1]$ and $i=s+1$.
For $i\notin[s+1]$, by the same calculation as in Lemma \ref{lem:line_lasso}, we have
\begin{align*}
    \abs{G_{\bX,i}(\wtheta^{\lambdao} + \delta\cdot\sign(Q)\theta')}
    & \leq
    \abs{G_{\bX,i}(\wtheta^{\lambdao})} + C\cdot\abs{\delta}. & \text{for $i \notin [s+1]$}
\end{align*}
where $C = \abs{\sum_{j=1}^{p-1}\theta'_j\inner{\bX_j}{\bX_i}}$ and hence
\begin{align*}
    \abs{G_{\bX,i}(\wtheta^{\lambdao} + \delta\theta')}
    & <
    \lambdao + \delta\cdot\sign(Q)\numberthis\label{eq:check_kkt_b2}
\end{align*}
for a sufficiently small $\abs{\delta}$.
For $i=s+1$, we have
\begin{align*}
    &
    G_{\bX,s+1}(\wtheta^{\lambdao} + \delta\cdot\sign(Q) \theta') \\
    & =
    q_{s+1}\lambdao + \delta\cdot\sign(Q) \sum_{j=1}^{s}\theta'_j\cdot \frac{1}{n}\inner{\bX_j}{\bX_{s+1}} \\
    & =
    q_{s+1}\big(\lambdao + \delta\cdot\sign(Q) - \delta\cdot\sign(Q)\big(1-q_{s+1}\sum_{j=1}^{s}\theta'_j\cdot \frac{1}{n}\inner{\bX_j}{\bX_{s+1}}\big)\big) \\
    & =
    q_{s+1}(\lambdao + \delta\cdot\sign(Q) - \sign(Q)Q\cdot\delta) \qquad \text{recall that $Q = 1-q_{s+1}\sum_{j=1}^{s}\theta'_j\cdot \frac{1}{n}\inner{\bX_j}{\bX_{s+1}}$}.
\end{align*}
and hence, using the fact that $\sign(Q)Q = \abs{Q} > 0$ and $\delta \geq 0$,
\begin{align*}
    \abs{G_{\bX,s+1}(\wtheta^{\lambdao} + \delta \theta')}
    & =
    \lambdao + \delta\cdot\sign(Q) - \abs{Q}\cdot\delta
    \leq 
    \lambdao + \delta\cdot\sign(Q)\numberthis\label{eq:check_kkt_b3}
\end{align*}
for a sufficiently small $\abs{\delta}$.
Using Lemma \ref{lem:subgradient} with \eqref{eq:check_kkt_b}, \eqref{eq:check_kkt_b2} and \eqref{eq:check_kkt_b3}, we conclude that $\wtheta+\delta\cdot\sign(Q) \theta'$ is a Lasso solution for $\lambda = \lambdao + \delta\cdot\sign(Q)$ for a sufficiently small $\abs{\delta}$ and $\delta\geq 0$.

We now analyze $G_{\bX,i}(\wtheta^{\lambdao} + \delta\cdot\sign(Q)\theta'')$ when $\delta<0$.
We need to check the two cases of coordinates of $\wtheta^{\lambdao} + \delta\cdot\sign(Q) \theta''$ being non-zero or not which correspond to $i\in[s+1]$ and $i\notin[s+1]$.
For $i\in[s+1]$, by a similar calculation as in Lemma \ref{lem:line_lasso}, we have
\begin{align*}
    G_{\bX,i}(\wtheta^{\lambdao} + \delta\cdot\sign(Q) \theta'')
    & =
    q_i(\lambdao + \delta\cdot\sign(Q)) & \text{for $i\in[s+1]$.} \numberthis\label{eq:check_kkt_b4}
\end{align*}
We further split into two cases: $i\in[s]$ and $i=s+1$.
For $i\in[s]$, we note that $q_i = \sign(\wtheta_i^{\lambdao}) = \sign(\wtheta_i^{\lambdao} + \delta\cdot\sign(Q) \theta_i'')$ for a sufficiently small $\abs{\delta}$.
For $i=s+1$, we check that $\wtheta_{s+1}^{\lambdao} + \delta\cdot\sign(Q) \theta_{s+1}'' = \delta\cdot\sign(Q) \theta''_{s+1}$ by the assumption on $\wtheta^{\lambdao}$.
From the definition of $\theta''$ in \eqref{eq:theta_dprime} and direct block matrix calculation, we have
\begin{align*}
    \theta''_{s+1}
    & =
    \frac{-q_{s+1}(1-q_{s+1}\wu^\top\wGamma_{[s]}^{-1}q_{[s]})}{\wGamma_{s+1,s+1}-\wu^\top\wGamma_{[s]}^{-1}\wu}
    =
    \frac{-q_{s+1}\cdot Q}{\wGamma_{s+1,s+1}-\wu^\top\wGamma_{[s]}^{-1}\wu}
\end{align*}
where $\wu$ is the $s$-dimensional vector whose $i$-th entry is $\wGamma_{i,s+1} = \frac{1}{n}\inner{\bX_i}{\bX_{s+1}}$ for $i\in[s]$.
Since the denominator $\wGamma_{s+1,s+1}-\wu^\top\wGamma_{[s]}^{-1}\wu$ is positive, $\sign(Q)Q = \abs{Q}>0$ and $\delta<0$, we have 
\begin{align*}
    \sign(\wtheta_{s+1}^{\lambdao} + \delta\cdot\sign(Q) \theta_{s+1}'')
    & =
    \sign(\delta\cdot\sign(Q) \theta_{s+1}'') 
    = 
    q_{s+1}.\numberthis\label{eq:check_kkt_b5}
\end{align*}
For $i\notin[s+1]$, by a similar calculation as in Lemma \ref{lem:line_lasso}, we have
\begin{align*}
    \abs{G_{\bX,i}(\wtheta^{\lambdao} + \delta\cdot\sign(Q)\theta'')}
    & \leq
    \abs{G_{\bX,i}(\wtheta^{\lambdao})} + C_1\cdot\abs{\delta}. & \text{for $i \notin [s+1]$}
\end{align*}
where $C_1 = \abs{\sum_{j=1}^{p-1}\theta''_j\inner{\bX_j}{\bX_i}}$ and hence
\begin{align*}
    \abs{G_{\bX,i}(\wtheta^{\lambdao} + \delta\cdot\sign(Q)\theta')}
    & <
    \lambdao + \delta\cdot\sign(Q)\numberthis\label{eq:check_kkt_b6}
\end{align*}
for a sufficiently small $\abs{\delta}$.
Using Lemma \ref{lem:subgradient} with \eqref{eq:check_kkt_b4}, \eqref{eq:check_kkt_b5} and \eqref{eq:check_kkt_b6}, we conclude that $\wtheta^{\lambdao} + \delta\cdot\sign(Q) \theta''$ is a Lasso solution for $\lambda=\lambdao+\delta\cdot\sign(Q)$ for a sufficiently small $\abs{\delta}$ and $\delta<0$.

\end{proof}

\subsection{Proof of Lemma \ref{lem:pop_approx}}\label{sec:pop_approx}

\popapprox*

\begin{proof}

By the definition of $Q(\lambda,\bX)$ and $Q_\bZ(\lambda,\bX)$,  we have
\begin{align*}
    Q(\lambda,\bX) - Q_\bZ(\lambda,\bX)
    & =
    \frac{1}{2}\E(Y_p - \sum_{j=1}^{p-1}\wtheta^{\lambda,\bX}_jY_j)^2  - \frac{1}{2n}\norm{\bZ_p - \sum_{j=1}^{p-1}\wtheta^{\lambda,\bX}_j\bZ_j}_2^2\\
    & =
    \frac{1}{2}\E(Y_p^2 - 2Y_p\sum_{j=1}^{p-1}\wtheta^{\lambda,\bX}_jY_j + \sum_{i=1}^{p-1}\sum_{j=1}^{p-1}\wtheta^{\lambda,\bX}_i\wtheta^{\lambda,\bX}_jY_iY_j) \\
    & \qquad -
    \frac{1}{2}\big(\frac{1}{n}\inner{\bZ_p}{\bZ_p} - 2\sum_{j=1}^{p-1}\wtheta^{\lambda,\bX}_j\frac{1}{n}\inner{\bZ_p}{\bZ_j} + \sum_{i=1}^{p-1}\sum_{j=1}^{p-1}\wtheta^{\lambda,\bX}_i\wtheta^{\lambda,\bX}_j\frac{1}{n}\inner{\bZ_i}{\bZ_j}\big)
\end{align*}

Recall that $\abs{\E(Y_iY_i) - \frac{1}{n}\inner{\bZ_i}{\bZ_j}} <\eps'$.
Hence, we have
\begin{align*}
    \abs{Q(\lambda,\bX) - Q_\bZ(\lambda,\bX)}
    & <
    \frac{\eps'}{2}\big(1 + 2\sum_{j=1}^{p-1}\abs{\wtheta^{\lambda,\bX}_j} + \sum_{i=1}^{p-1}\sum_{j=1}^{p-1}\abs{\wtheta^{\lambda,\bX}_i}\abs{\wtheta^{\lambda,\bX}_j}\big) \\
    & =
    \eps'\cdot (1+\norm{\wtheta^{\lambda,\bX}}_1)^2
\end{align*}

\end{proof}

\subsection{Proof of Lemma \ref{lem:theta_bound}}\label{sec:theta_bound}

\thetabound*

\begin{proof}

It is easy to see that
\begin{align*}
    \norm{\wtheta^{\lambda,\bX}}_1
    & \leq
    \norm{\wtheta^{0,\bX}}_1
    =
    \norm{\wGamma^{-1}\wv}_1.
\end{align*}
Furthermore, 
\begin{align*}
    \norm{\wGamma^{-1}\wv}_1
    & \leq
    \sqrt{p}\norm{\wGamma^{-1}\wv}_2 
    \leq
    \sqrt{p}\norm{\wGamma^{-1}}_2\norm{\wv}_2
\end{align*}
where $\wv$ is the $(p-1)$-dimensional vector whose $i$-th entry is $\frac{1}{n}\inner{\bX_i}{\bX_p}$ for $i\in [p-1]$.

By matrix Chernoff bound, we have
\begin{align*}
    \norm{\wGamma - \Gamma}_2 < O(\norm{\Gamma}_2)
    \qquad \text{and} \qquad
    \norm{\wv - v}_2 < O(\norm{v}_2)
\end{align*}
with probability $1-O(e^{-\Omega(n)})$.
Hence, we have
\begin{align*}
    \norm{\wtheta^{\lambda,\bX}}_1
    & \leq
    O(\sqrt{p}\norm{\Gamma^{-1}}_2\norm{v}_2)
    \leq 
    O(\sqrt{p}\kappa(\Sigma)).
\end{align*}

\end{proof}

\subsection{Proof of Lemma \ref{lem:wtheta_lambda_bound}}\label{sec:wtheta_lambda_bound}

\wthetalambdabound*

\begin{proof}

For any $\lambda>0$ and any matrix $\bX\in\mathbb{R}^{n\times p}$, define
\begin{align*}
    F_{\lambda,\bX}(\theta) 
    & := 
    \frac{1}{2n}\norm{\bX_p - \sum_{j=1}^{p-1}\theta_j\bX_j}^2 + \lambda\norm{\theta}_1.
\end{align*}

For any $\theta\in\mathbb{R}^{p-1}$, By Taylor expansion, we have
\begin{align*}
    F_{\lambda,\bX}(\theta)
    & =
    F_{\lambda,\bX}(\wtheta^{\lambda,\bX}) + \nabla F_{\lambda,\bX}(\wtheta^{\lambda,\bX})^\top (\theta - \wtheta^{\lambda,\bX}) \\
    & \qquad +
    \frac{1}{2}(\theta - \wtheta^{\lambda,\bX})^\top\nabla^2 F_{\lambda,\bX}(\wtheta^{\lambda,\bX})(\theta - \wtheta^{\lambda,\bX})
\end{align*}
Note that $\nabla F_{\lambda,\bX}(\wtheta^{\lambda,\bX})^\top (\theta - \wtheta^{\lambda,\bX}) \geq 0$; otherwise  it contradicts the fact that $\wtheta^{\lambda,\bX}$ is $\arg\min_{\theta\in\mathbb{R}^{p-1}} F_{\lambda,\bX}(\theta)$.
It means that 
\begin{align*}
    F_{\lambda,\bX}(\theta)
    & \geq
    F_{\lambda,\bX}(\wtheta^{\lambda,\bX})  +
    \frac{1}{2}(\theta - \wtheta^{\lambda,\bX})^\top\nabla^2 F_{\lambda,\bX}(\wtheta^{\lambda,\bX})(\theta - \wtheta^{\lambda,\bX}) \\
    &  \geq
    F_{\lambda,\bX}(\wtheta^{\lambda,\bX})  +
    \frac{1}{2}\svmin(\wGamma)\norm{\theta - \wtheta^{\lambda,\bX}}_2^2
\end{align*}
By matrix Chernoff bound, we have
\begin{align*}
    \norm{\wGamma - \Gamma}_2 < O(\norm{\Gamma}_2)
\end{align*}
with probability $1-O(e^{-\Omega(n)})$.
Namely, we have
\begin{align*}
    F_{\lambda,\bX}(\theta)
    &  \geq
    F_{\lambda,\bX}(\wtheta^{\lambda,\bX})  +
    \Omega(\svmin(\Gamma)\cdot\norm{\theta - \wtheta^{\lambda,\bX}}_2^2) \\
    & \geq 
    F_{\lambda,\bX}(\wtheta^{\lambda,\bX})  +
    \Omega(\frac{\svmin(\Gamma)}{p}\cdot\norm{\theta - \wtheta^{\lambda,\bX}}_1^2). \numberthis \label{eq:F_lower}
\end{align*}

On the other hand, 
\begin{align*}
    F_{\lambda,\bX}(\wtheta^{\lambda,\bZ})
    & =
    Q_{\bX}(\lambda,\bZ) + \lambda\norm{\wtheta^{\lambda,\bZ}}_1 & \text{recall the definition of $F_{\lambda,\bX}$ and $Q_{\bX}$} \\
    & \leq
    Q_{\bZ}(\lambda,\bZ) +  O(\eps'p\kappa(\Sigma)^2) + \lambda\norm{\wtheta^{\lambda,\bZ}}_1 & \text{by Lemma \ref{lem:pop_approx} and \ref{lem:theta_bound}} \\
    & \leq
    Q_{\bZ}(\lambda,\bX) +  O(\eps'p\kappa(\Sigma)^2) + \lambda\norm{\wtheta^{\lambda,\bX}}_1 & \text{$\wtheta^{\lambda,\bZ}$ is the minimum point}\\
    & \leq
    Q_{\bX}(\lambda,\bX) +  O(\eps'p\kappa(\Sigma)^2) + \lambda\norm{\wtheta^{\lambda,\bX}}_1 & \text{by Lemma \ref{lem:pop_approx} and \ref{lem:theta_bound}} \\
    & = 
    F_{\lambda,\bX}(\wtheta^{\lambda,\bX}) + O(\eps'p\kappa(\Sigma)^2) & \text{recall the definition of $F_{\lambda,\bX}$ and $Q_{\bX}$} \numberthis\label{eq:F_upper}
\end{align*}

Plugging \eqref{eq:F_upper} into \eqref{eq:F_lower} with $\theta = \wtheta^{\lambda,\bZ}$, we have
\begin{align*}
    \norm{\wtheta^{\lambda,\bZ} - \wtheta^{\lambda,\bX}}_1^2
    & \leq
    O(\eps'\cdot\frac{p^2\kappa(\Sigma)^2}{\svmin(\Sigma)})
\end{align*}
or 
\begin{align*}
    \abs{\norm{\wtheta^{\lambda,\bZ}}_1 - \norm{\wtheta^{\lambda,\bX}}_1}
    & \leq
    O(\sqrt{\eps'\cdot\frac{p^2\kappa(\Sigma)^2}{\svmin(\Sigma)}}).
\end{align*}

By Lemma \ref{lem:subgradient}, we have
\begin{align*}
    \frac{1}{n}\inner{\bX_p - \sum_{j=1}^{p-1}\wtheta^{\lambda,\bX}_j\bX_j}{\bX_i} =  \sign(\wtheta^{\lambda,\bX}_i)\lambda
\end{align*}
for $i\in [p-1]$ that $\wtheta^{\lambda,\bX}_i\neq 0$.
Hence,
\begin{align*}
    \lambda \leq M \cdot (1+\norm{\widehat{\theta}^{\lambda,\bX}}_1)
\end{align*}
where $M = \max\inner{\bX_i}{\bX_j}$ as long as $\lambda$ is not too large such as $\widehat{\theta}^{\lambda}\neq \bzero$.
By Lemma \ref{lem:theta_bound}, we have
\begin{align*}
    \lambda 
    & \leq
    O(M \cdot \sqrt{p}\kappa(\Sigma))
    \leq
    O(\sqrt{p}\svmax(\Sigma)\kappa(\Sigma)).
\end{align*}

\end{proof}

\subsection{Proof of Lemma \ref{lem:almost_tangent}}

\begin{lemma} \label{lem:almost_tangent}
Recall that 
\begin{itemize}
    \item $q_{[s]}$ is the $s$-dimensional vector whose $i$-th entry is $\sign(\wtheta^{\lambdao}_i)$ for $i\in[s]$
    \item $\wGamma_{[s]}$ is the $s$-by-$s$ matrix whose $(r,c)$-entry is $\frac{1}{n}\inner{\bX_r}{\bX_c}$ for $r,c\in[s]$
    \item $\Gamma_{[s]}$ is the $s$-by-$s$ submatrix of $\Gamma$ in \eqref{eq:sigma_def} whose indices are in $[s]$
    \item $\Dtheta = \wtheta^{\lambdao} - \theta^*$
\end{itemize}
Let $\mathcal{C}$ be the event of $\abs{q_{[s]}\wGamma_{[s]}^{-1} \Gamma_{[s]} \Dtheta_{[s]}}\leq \frac{1}{100\sqrt{s\sigma_{\max}(\Gamma)}}\sqrt{{\Dtheta_{[s]}}^\top\Gamma_{[s]}\Dtheta_{[s]}}$.
Then, we have
\begin{align*}
    \mathsf{Pr}_{\bX\sim \mathcal{N}(\bzero,\Sigma)^{n}}(\wne^{\lambdao}=[s] \wedge \mathcal{C})
    & \leq
    O\bigg(2^s\cdot \bigg( p^{-\Omega(\frac{1}{\kappa(\Gamma)})} + spe^{-\Omega(\frac{n}{s^2\kappa(\Gamma)^3})}\bigg) \bigg).
\end{align*}
\end{lemma}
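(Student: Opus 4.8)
The plan is to exhibit an explicit ``bad event'' implied by $\{\wne^{\lambdao}=[s]\}\cap\mathcal C$ whose probability decomposes into a union bound over the $2^s$ sign patterns, a concentration defect of order $sp\,e^{-\Omega(n/(s^2\kappa(\Gamma)^3))}$ (the second term), and a Gaussian small-ball estimate contributing $p^{-\Omega(1/\kappa(\Gamma))}$ (the first term). Since $\ne^*=[s]$, the population residual $\epsilon:=X_p-\sum_{j\in[s]}\theta^*_jX_j$ is independent of $X_{[p-1]}$ with variance $\sigma^2=a-v^\top\Gamma^{-1}v$ (notation of \eqref{eq:sigma_def}), so one may write $\bX_p=\bX_{[s]}\theta^*_{[s]}+\boldsymbol{\epsilon}$ with $\boldsymbol{\epsilon}\sim\mathcal N(\bzero,\sigma^2 I_n)$ independent of $\bX_{[p-1]}$. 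Fix the sign pattern $q\in\{\pm1\}^s$; on the event $\{\wne^{\lambdao}=[s],\ \sign(\wtheta^{\lambdao}_{[s]})=q\}$, Lemma~\ref{lem:subgradient} yields $\wtheta^{\lambdao}_{[s]}=\theta^*_{[s]}+w+\lambdao\theta'_{[s]}$ with $\theta'_{[s]}=-\wGamma_{[s]}^{-1}q$ as in \eqref{eq:theta_prime} and $w:=\wGamma_{[s]}^{-1}\tfrac1n\bX_{[s]}^\top\boldsymbol{\epsilon}$, hence $\Dtheta_{[s]}=w+\lambdao\theta'_{[s]}$. Writing $A:=\theta'^\top_{[s]}\Gamma_{[s]}\theta'_{[s]}$, $B:=w^\top\Gamma_{[s]}\theta'_{[s]}$, $C:=w^\top\Gamma_{[s]}w$, one checks $q_{[s]}^\top\wGamma_{[s]}^{-1}\Gamma_{[s]}\Dtheta_{[s]}=-(B+\lambdao A)$ and $\Dtheta_{[s]}^\top\Gamma_{[s]}\Dtheta_{[s]}=C+2\lambdao B+\lambdao^2 A$, so $\mathcal C$ becomes $(B+\lambdao A)^2\le\tfrac{1}{10^4\,s\,\sigma_{\max}(\Gamma)}(C+2\lambdao B+\lambdao^2 A)$.

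Next I would fix a good event $\mathcal G$ on which all empirical inner products $\tfrac1n\inner{\bX_i}{\bX_j}$, $\tfrac1n\inner{\bX_i}{\boldsymbol{\epsilon}}$ and $\norm{w}_2^2$ concentrate around their means to a suitable accuracy $\eps'$ (polynomially small in $1/s$ and $1/\kappa(\Gamma)$, chosen so that the operator-norm error $\wGamma_{[s]}-\Gamma_{[s]}$ is negligible); matrix Chernoff plus Gaussian concentration and a union bound over $O(sp)$ events give $\mathsf{Pr}(\mathcal G^c)\le sp\,e^{-\Omega(n/(s^2\kappa(\Gamma)^3))}$. On $\mathcal G$ we have $\wGamma_{[s]}\approx\Gamma_{[s]}$, so $A\asymp q^\top\Gamma_{[s]}^{-1}q\in[\,s/\sigma_{\max}(\Gamma),\,s/\sigma_{\min}(\Gamma)\,]$ and, since then $A\gg 1/(s\,\sigma_{\max}(\Gamma))$, completing the square in the displayed inequality shows that $\mathcal C$ forces $\lambdao$ into an interval of explicit (and typically $\ll\lambda_0:=-B/A$) half-width $\rho$ around $\lambda_0$. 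Conceptually this is the quantitative form of the phenomenon isolated by \citet{meinshausen2006}: $\wne^{\lambdao}=[s]$ together with near-tangency pins the oracle penalty essentially to the vertex $-B/A$ of the restricted prediction-risk parabola, sitting inside the segment of the Lasso path whose active set is exactly $[s]$ --- i.e.\ with no spurious variable effectively entering --- which is what should be rare.

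To exploit this I would feed $\wne^{\lambdao}=[s]$ through the KKT conditions, which give $\abs{G_{\bX,i}(\wtheta^{\lambdao})}\le\lambdao$ for every spurious $i\notin[s]$. Since $\lambda\mapsto G_{\bX,i}(\wtheta^\lambda)=a_i+b_i\lambda$ is affine on the $[s]$-segment with slope $b_i=q^\top\wGamma_{[s]}^{-1}\wGamma_{[s],i}$ bounded on $\mathcal G$, the $\rho$-localization upgrades this to $\abs{a_i+b_i\lambda_0}\le\lambda_0+O(\rho(1+\abs{b_i}))$ for every $i\notin[s]$. Now decompose $\bX_i=\bX_{[s]}\gamma_i+\boldsymbol{\zeta}_i$ ($\gamma_i=\Gamma_{[s]}^{-1}\Gamma_{[s],i}$, $\boldsymbol{\zeta}_i\sim\mathcal N(\bzero,\tau_i^2 I_n)$ with $\tau_i^2\ge\sigma_{\min}(\Gamma)$, independent of $\bX_{[s]}$ and $\boldsymbol{\epsilon}$) and split $\boldsymbol{\epsilon}=P_{[s]}\boldsymbol{\epsilon}+(I-P_{[s]})\boldsymbol{\epsilon}$, where $P_{[s]}$ projects onto the column span of $\bX_{[s]}$. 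A short computation shows $w$, $B$, $C$, $\lambda_0$, $\rho$ and the $b_i$ depend on $\boldsymbol{\epsilon}$ only through $P_{[s]}\boldsymbol{\epsilon}$, while the intercepts collapse to $a_i=\tfrac1n\inner{(I-P_{[s]})\boldsymbol{\epsilon}}{\boldsymbol{\zeta}_i}$. Hence, conditionally on $\bX_{[p-1]}$ and $P_{[s]}\boldsymbol{\epsilon}$ (which fixes everything but the $a_i$), the vector $(a_i)_{i\notin[s]}$ is centered jointly Gaussian with $\mathrm{Var}(a_i)\asymp\sigma^2\tau_i^2/n$ and covariance $\tfrac{\sigma^2}{n^2}\inner{(I-P_{[s]})\boldsymbol{\zeta}_i}{(I-P_{[s]})\boldsymbol{\zeta}_j}$. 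It then remains to bound the probability that $\abs{a_i+b_i\lambda_0}\le\lambda_0+O(\rho)$ holds simultaneously over all $p-1-s$ spurious $i$ --- that this Gaussian vector lands in an intersection of slabs of half-width $\asymp\lambda_0\asymp\sigma/\sqrt{nA}$ centered at $-b_i\lambda_0$. Comparing the half-width with the per-coordinate scale $\sigma\tau_i/\sqrt n$ gives a ratio $\asymp1/\sqrt{\tau_i^2 A}$ whose range over $i$ is governed by the extreme eigenvalues of $\Gamma$ up to a factor $\kappa(\Gamma)$ (since $\tau_i^2 A\in[\,s/\kappa(\Gamma),\,s\kappa(\Gamma)\,]$); a Gaussian-comparison argument over the $\asymp p$ spurious coordinates then caps this probability at $p^{-\Omega(1/\kappa(\Gamma))}$. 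Reassembling the conditioning, the defect $\mathsf{Pr}(\mathcal G^c)$, and the $2^s$ sign patterns gives the stated bound.

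\textbf{Main obstacle.} The hard part is the final small-ball step together with the $\rho$-localization feeding it: $\lambdao$ is a complicated functional of the whole data, a priori entangled with both the estimation noise $w$ and the spurious statistics $(a_i,b_i)$, so one must first decouple it --- via the exact conditional independence of $(a_i)$ from $(w,B,C,\lambda_0,\rho,b_i)$ given $(\bX_{[p-1]},P_{[s]}\boldsymbol{\epsilon})$, and via the $\rho$-control of $\lambdao$ around $-B/A$ --- and then prove a small-ball estimate for an intersection of $\asymp p$ \emph{correlated} Gaussian slabs strong enough to yield the exponent $p^{-\Omega(1/\kappa(\Gamma))}$ yet not collapsing to a trivial $O(1)$ bound when the slabs are nearly parallel. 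Tracking the condition-number-dependent scales of $A=q^\top\Gamma_{[s]}^{-1}q$ and of the conditional variances $\tau_i^2$, and verifying that the affine interpolation used to pass from $\lambdao$ to $\lambda_0$ remains valid even when the $[s]$-segment of the path is short, is where essentially all the work lies.
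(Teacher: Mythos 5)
Your algebraic setup is correct and genuinely different from the paper's: the representation $\Dtheta_{[s]}=w+\lambdao\theta'_{[s]}$ with $w=\wGamma_{[s]}^{-1}\tfrac1n\bX_{[s]}^\top\boldsymbol{\epsilon}$, the rewriting of $\mathcal C$ as a quadratic inequality that localizes $\lambdao$ near $\lambda_0=-B/A$, and the exact conditional decoupling of the spurious intercepts $a_i=\tfrac1n\inner{(I-P_{[s]})\boldsymbol{\epsilon}}{\boldsymbol{\zeta}_i}$ from $(w,B,C,\lambda_0,\rho,b_i)$ given $(\bX_{[p-1]},P_{[s]}\boldsymbol{\epsilon})$ all check out. (Your final worry about the affine interpolation along the Lasso path is unnecessary: the identity $G_{\bX,i}(\wtheta^{\lambdao})=a_i+b_i\lambdao$ holds pointwise at $\lambda=\lambdao$ whenever $\wne^{\lambdao}=[s]$ with sign pattern $q$, so no path argument is needed.)

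The problem is that the step which actually produces the lemma's bound --- the $p^{-\Omega(1/\kappa(\Gamma))}$ term --- is exactly the step you leave as the ``main obstacle,'' and it is not a routine completion. Conditionally, $(a_i)_{i\notin[s]}$ is a correlated Gaussian vector whose covariance is essentially the conditional covariance of the spurious variables given the active ones; for intersections of symmetric slabs, correlation only \emph{increases} the probability (the Gaussian correlation inequality goes the wrong way for you), so no coordinatewise product or naive comparison-to-independence yields an upper bound decaying in $p$: you must extract $\Omega(p)$ effectively independent directions with the correct $\kappa(\Gamma)$-dependence. Moreover the slab half-widths $\lambda_0+O(\rho(1+\abs{b_i}))$ involve random quantities: $\lambda_0=-B/A$ can be atypically large at only Gaussian-tail cost, $\rho$ does not vanish when $B$ is small, and $\abs{b_i}$ can be as large as $\sqrt s\,\kappa(\Gamma)$, so the widened slabs can exceed the coordinate scale $\sigma\tau_i/\sqrt n$ by $\poly(\kappa(\Gamma))$ factors; showing that all of this still gives exponent $\Omega(1/\kappa(\Gamma))$, together with the $2^s$ union bound and the $sp\,e^{-\Omega(n/(s^2\kappa(\Gamma)^3))}$ concentration defect, is the entire content of the lemma. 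The paper resolves precisely this point by a device your plan lacks: it whitens with a sign-adapted square root $H'=HU$ of $\Gamma$ (the rotation $U$ depending only on the $2^s$ sign patterns, zeroing the active-to-spurious block and aligning the active block), in which basis the relevant noise vector $H'^{-1}\ww$ has i.i.d.\ standard Gaussian coordinates after a random-projection step, and the events $\wne^{\lambdao}=[s]\wedge\mathcal C$ reduce to ``one coordinate dominates, up to the factor $\eta\asymp\kappa(\Gamma)^{-1/2}$, at least half of the $\approx p$ independent coordinates,'' whose probability is computed explicitly as $\int\bigl(\erf(x/\eta)\bigr)^{(p-1-s)/2}\tfrac{1}{\sqrt{2\pi}}e^{-x^2/2}\,\dir x\le O\bigl(p^{-\Omega(1/\kappa(\Gamma))}\bigr)$. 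Unless you either import such a whitening into your slab formulation or prove the correlated-slab small-ball estimate directly (uniformly over the conditioned quantities), the proposal does not establish the lemma.
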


\begin{proof}

From Lemma \ref{lem:subgradient}, we have
\begin{align*}
    \begin{cases}
        \text{if $i\in [s]$, then $G_{\bX,i}(\wtheta^{\lambdao})=\frac{1}{n}\inner{\bX_p - \sum_{j=1}^{p-1}\wtheta^{\lambdao}_j\bX_j}{\bX_i} = \sign(\wtheta^{\lambdao}_i)\lambdao$} \\
        \text{if $i\notin [s]$, then $G_{\bX,i}(\wtheta^{\lambdao})=\frac{1}{n}\inner{\bX_p - \sum_{j=1}^{p-1}\wtheta^{\lambdao}_j\bX_j}{\bX_i} = \lambdaio{i}$}
    \end{cases}
\end{align*}
where $\lambdaio{i}$ are some values whose absolute value is less than $\lambdao$, i.e. $\abs{\lambdaio{i}} \leq \lambdao$.
Let $\ww$ be the $(p-1)$-dimensional vector whose $i$-th entry is $\frac{1}{n}\inner{\bX_p - \sum_{j=1}^{p-1}\theta^*_j\bX_j}{\bX_i}$ for $i\in [p-1]$ and $z$ be the $(p-1)$-dimensional vector whose $i$-th entry is $\begin{cases}
    \sign(\wtheta^{\lambdao}_i) \lambdao &\text{for $i\in [s]$} \\
    \lambdaio{i} & \text{for $i\notin [s]$}
\end{cases}$.
We can write it in the matrix form.
\begin{align*}
    \ww-\widehat{\Gamma}\Dtheta = z \numberthis \label{eq:matrix_form_before}
\end{align*}

Recall that $\Gamma$ is a positive definite matrix which means $\Gamma$ can be decomposed as 
\begin{align*}
    \Gamma &= HH^\top & \text{for some matrix $H$.}
\end{align*}
We now multiple the both sides of \eqref{eq:matrix_form_before} by $H^{-1}$ and we have
\begin{align*}
    H^{-1}\ww - H^{-1}\wGamma \Dtheta = H^{-1}z. \numberthis \label{eq:matrix_form_before_2}
\end{align*}
for any $H$ that satisfies $\Gamma = HH^\top$.

Note that there are infinitely many such decomposition by introducing an orthonormal matrix, i.e.
\begin{align*}
    \Gamma &= HU(HU)^\top & \text{for some orthonormal matrix $U$.}
\end{align*}
In other words, we can always introduce an orthonormal matrix to ensure $H$ satisfies certain properties.
We now multiple both sides by $(HU)^{-1}$ and we have
\begin{align*}
    (HU)^{-1} \ww - (HU)^{-1}\wGamma\Dtheta = (HU)^{-1}z \numberthis\label{eq:matrix_form}
\end{align*}
There exists an orthonormal matrix $U$ such that
\begin{align*}
\begin{cases}
    \text{$((HU)^{-1})_{r,c} = 0$ for $r\in [s]$ and $c\notin [s]$}  \\
    \text{$((HU)^{-1} z)_i$ are positive and the same for $i\in [s]$.}\numberthis\label{eq:h_condition}
\end{cases}
\end{align*}
That is,
\begin{align*}
    (HU)^{-1}
    =
    \begin{bmatrix}
        \begin{matrix}
            & & \\
            & * & \\
            & &
        \end{matrix} & \begin{matrix}
            & & \\
            & O_{s \times (p-1-s)} & \\
            & &
        \end{matrix} \\
        \begin{matrix}
            & & \\
            & * & \\
            & &
        \end{matrix} & \begin{matrix}
            & & \\
            & * & \\
            & &
        \end{matrix}
    \end{bmatrix}
    \qquad \text{and} \qquad 
    (HU)^{-1}z
    = 
    \frac{1}{\sqrt{s}}\norm{((HU)^{-1}z)_{[s]}}_2
    \begin{bmatrix}
    1 \\
    \vdots \\
    1 \\
    * \\
    \vdots \\
    *
    \end{bmatrix}
\end{align*}
where $O_{s \times (p-1-s)}$ is the $s$-by-$(p-1-s)$ zero matrix and $((HU)^{-1}z)_{[s]}$ is the $s$-dimensional subvector of $(HU)^{-1}z$ whose indices are in $[s]$.
Note that $U$ depends on the samples by the second condition.
Moreover, by the first condition, $U$ only depends on $\sign(\wtheta^{\lambdao}_i)$ for $i\in [s]$.
Hence, there are $2^s$ possibilities and we will take union bound over all of them.

Let $H'$ be $HU$ such that $U$ satisfies \eqref{eq:h_condition}.
Observe that
\begin{align*}
    \norm{({H'}^{-1}z)_{-[s]}}_2^2
    & =
    \sum_{j=s+1}^{p-1} ({H'}^{-1}z)_j^2
\end{align*}
where $({H'}^{-1}z)_{-[s]}$ is the $(p-1-s)$-dimensional subvector of ${H'}^{-1}z$ whose indices are not in $[s]$.
Then,  there are at least $\frac{p-1-s}{2}$ of $({H'}^{-1}z)_j^2$ less than $\frac{2}{p-1-s}\norm{({H'}^{-1}z)_{-[s]}}_2^2$.
Also, we can bound the term $\norm{({H'}^{-1}z)_{-[s]}}_2$ by
\begin{align*}
    \norm{({H'}^{-1}z)_{-[s]}}_2
    & \leq
    \norm{{H'}^{-1}}_2\cdot \norm{z}_2 
     \leq
    \svmax({H'}^{-1}) \cdot \sqrt{p-1}\lambdao 
     =
    \sqrt{\frac{p-1}{\svmin(\Gamma)}}\lambdao.
\end{align*}
In other words, at least $\frac{p-1-s}{2}$ of $i\notin[s]$ such that
\begin{align*}
    ({H'}^{-1}z)_i 
    & \leq
    \sqrt{\frac{2(p-1)}{p-1-s}\frac{1}{\svmin(\Gamma)}} \lambdao
\end{align*}

On the other hand, all of $({H'}^{-1} z)_i$ are positive and the same for $i \in [s]$ and hence
\begin{align*}
    ({H'}^{-1} z)_i
    & =
    \frac{1}{\sqrt{s}}\norm{({H'}^{-1} z)_{[s]}}_2 
    =
    \frac{1}{\sqrt{s}}\norm{({H'}^{-1})_{[s]} z_{[s]}}_2 \qquad \text{by \eqref{eq:h_condition}}
    \\
    & \geq 
    \frac{1}{\sqrt{s}}\svmin({H'}^{-1})\norm{z_{[s]}}_2 \\
    & =
    \sqrt{\frac{1}{\svmax(\Gamma)}}\lambdao \numberthis\label{eq:hz_larger}
\end{align*}
where $(H'^{-1})_{[s]}$ is the $s$-by-$s$ submatrix of $H'^{-1}$ whose indices are in $[s]$ and $({H'}^{-1} z)_{[s]}$ (resp. $z_{[s]}$) is the $s$-dimensional subvector of  ${H'}^{-1} z$ (resp. $z$) whose indices are in $[s]$.

From \eqref{eq:matrix_form}, we have at least $\frac{p-1-s}{2}$ of $j\notin[s]$ such that, for all $i\in [s]$, 
\begin{align*}
    ({H'}^{-1} \ww - {H'}^{-1}\wGamma{H'}^{-\top}{H'}^\top\Dtheta)_i
    & \geq
    \eta \cdot ({H'}^{-1} \ww - {H'}^{-1}\wGamma{H'}^{-\top}{H'}^\top\Dtheta)_j \numberthis \label{eq:main_ineq_1}
\end{align*}
where $\eta = \sqrt{\frac{p-1-s}{2(p-1)}\frac{1}{\kappa(\Gamma)}}$ and $\kappa(\Gamma)$ is the condition number of $\Gamma$, i.e. $\kappa(\Gamma) = \frac{\svmax(\Gamma)}{\svmin(\Gamma)}$.

By matrix Chernoff bound, we have
\begin{align*}
    \norm{\wGamma_{[s]} - \Gamma_{[s]}}_2 < t\norm{\Gamma_{[s]}}_2 \numberthis\label{eq:mcb}
\end{align*}
with probability $1-O(e^{-\Omega(t^2n)})$ for any $t>0$.
Here, $\Gamma_{[s]}$ (resp. $\wGamma_{[s]}$) is the $s$-by-$s$ submatrix of $\Gamma$ (resp. $\wGamma$) whose indices are in $[s]$.
By Weyl's inequality, we further have
\begin{align*}
    \abs{\norm{\wGamma_{[s]}^{-1}}_2 - \norm{\Gamma_{[s]}^{-1}}_2}
    & =
    \abs{\frac{1}{\norm{\wGamma_{[s]}^{-1}}_2} - \frac{1}{\norm{\Gamma_{[s]}^{-1}}_2}}\cdot \norm{\wGamma_{[s]}^{-1}}_2\norm{\Gamma_{[s]}^{-1}}_2 \\
    & =
    \abs{\sigma_{\min}(\wGamma_{[s]}) - \sigma_{\min}(\Gamma_{[s]})}\cdot \norm{\wGamma_{[s]}^{-1}}_2\norm{\Gamma_{[s]}^{-1}}_2 \\
    & \leq
    \norm{\wGamma_{[s]} - \Gamma_{[s]}}_2\cdot \norm{\wGamma_{[s]}^{-1}}_2\norm{\Gamma_{[s]}^{-1}}_2
    \leq
    t\kappa(\Gamma) \norm{\wGamma_{[s]}^{-1}}_2.\numberthis\label{eq:weyl_ineq}
\end{align*}
which implies that
\begin{align*}
    \norm{\Gamma_{[s]}\wGamma_{[s]}^{-1} - I}_2
    & \leq
    \norm{\Gamma_{[s]} - \wGamma_{[s]}}_{2}\norm{\wGamma_{[s]}^{-1}}_2 
    \leq
    t\norm{\Gamma_{[s]}}_2\cdot \frac{1}{1-t\kappa(\Gamma)}\norm{\Gamma_{[s]}^{-1}}_2
    =
    \frac{t\kappa(\Gamma)}{1-t\kappa(\Gamma)}.
\end{align*}

Let $H'_{[s]}$ be the $s$-by-$s$ submatrix of $H'$ whose indices are in $[s]$ and $z_{[s]}$ be the $s$-dimensional subvector of $z$ whose indices are in $[s]$.
Now, we have
\begin{align*}
    \abs{({H_{[s]}'}^{-1}\Gamma_{[s]}\wGamma_{[s]}^{-1}z_{[s]})_i - ({H'_{[s]}}^{-1}z_{[s]})_i}
    & \leq
    \norm{{H_{[s]}'}^{-1}(\Gamma_{[s]}\wGamma_{[s]}^{-1}-I)z_{[s]}}_2 \\
    & \leq
    \norm{{H'_{[s]}}^{-1}}_2\norm{(\Gamma_{[s]}\wGamma_{[s]}^{-1}-I)}_2\norm{z_{[s]}}_2 \\
    & \leq
    \sqrt{\frac{1}{\svmin(\Gamma)}}\cdot\frac{t\kappa(\Gamma)}{1-t\kappa(\Gamma)}\cdot \sqrt{s}\lambdao.
\end{align*}
If we pick $t
    =
    \frac{1}{\kappa(\Gamma)(1+\sqrt{s\kappa(\Gamma)})}
    =
    \Theta(\frac{1}{\sqrt{s\kappa(\Gamma)^3}})
$ then we have
\begin{align*}
    \MoveEqLeft \abs{({H_{[s]}'}^{-1}\Gamma_{[s]}\wGamma_{[s]}^{-1}z_{[s]})_i - ({H'_{[s]}}^{-1}z_{[s]})_i}\\
    & \leq
    \frac{\lambdao}{50\sqrt{\sigma_{\max}(\Gamma)}} \\
    & \leq
    \frac{1}{50}({H_{[s]}'}^{-1} z_{[s]})_i \qquad \text{recall that, from \eqref{eq:hz_larger},  $({H_{[s]}'}^{-1} z_{[s]})_i = ({H'}^{-1} z)_i \geq \sqrt{\frac{1}{\svmax(\Gamma)}}\lambdao$ for $i\in [s]$} \numberthis\label{eq:mcb_ineq_h}
\end{align*}
with probability $1-O(e^{-\Omega(\frac{n}{s\kappa(\Gamma)^3})})$.

Now, we can analyze the event $\mathcal{C}$ which is $\abs{q_{[s]}\wGamma_{[s]}^{-1} \Gamma_{[s]} \Dtheta_{[s]}}\leq \frac{1}{100\sqrt{s\sigma_{\max}(\Gamma)}}\sqrt{{\Dtheta_{[s]}}^\top\Gamma_{[s]}\Dtheta_{[s]}}$.
Note that
\begin{align*}
    \lambdao \cdot q_{[s]}\wGamma_{[s]}^{-1} \Gamma_{[s]} \Dtheta_{[s]} 
    & =
    \lambdao \cdot q_{[s]}^\top\wGamma_{[s]}^{-1}\Gamma_{[s]}{H_{[s]}'}^{-\top}{H_{[s]}'}^{\top}\Dtheta_{[s]} \\
    & =
    \lambdao \cdot \sum_{j=1}^{s} ({H_{[s]}'}^{-1}\Gamma_{[s]}\wGamma_{[s]}^{-1}q_{[s]})_j({H_{[s]}'}^{\top}\Dtheta_{[s]})_j \\
    & \leq
    \sum_{j\in I_+} \frac{51}{50}({H_{[s]}'}^{-1} z_{[s]})_j({H_{[s]}'}^{\top}\Dtheta_{[s]})_j + \sum_{j\in I_-} \frac{49}{50}({H_{[s]}'}^{-1} z_{[s]})_j({H_{[s]}'}^{\top}\Dtheta_{[s]})_j 
\end{align*}
where $I_+$ (resp. $I_-$) is the subset of $[s]$ that $({H_{[s]}'}^{\top}\Dtheta_{[s]})_j$ is larger (resp. smaller) than $0$ for $j\in I_+$ (resp. $j\in I_-$).
Recall that $({H_{[s]}'}^{-1} z_{[s]})_j = \frac{1}{\sqrt{s}}\norm{({H'}^{-1})_{[s]} z_{[s]}}_2$ for all $j\in[s]$ and hence
\begin{align*}
    \lambdao \cdot q_{[s]}\wGamma_{[s]}^{-1} \Gamma_{[s]} \Dtheta_{[s]} 
    & \leq
    \frac{1}{\sqrt{s}}\norm{({H'}^{-1})_{[s]} z_{[s]}}_2\bigg(\sum_{j\in I_+} \frac{51}{50}({H_{[s]}'}^{\top}\Dtheta_{[s]})_j + \sum_{j\in I_-} \frac{49}{50}({H_{[s]}'}^{\top}\Dtheta_{[s]})_j\bigg)\numberthis\label{eq:i_0_upper}
\end{align*}

On the other hand, note that $\frac{\lambdao}{\sqrt{\sigma_{\max}(\Gamma)}} \leq \frac{1}{\sqrt{s}}\norm{({H'}^{-1})_{[s]} z_{[s]}}_2$ and $\sqrt{{\Dtheta_{[s]}}^\top\Gamma_{[s]}\Dtheta_{[s]}} = \norm{{H_{[s]}'}^{\top}\Dtheta_{[s]}}_2 \leq \sqrt{s}\bigg( \abs{({H_{[s]}'}^{\top}\Dtheta_{[s]})_{i_0}} + \abs{({H_{[s]}'}^{\top}\Dtheta_{[s]})_{i_1}} \bigg)$ where $i_0$ is the index such that $i_0 = \arg\max_{i\in I_+} \abs{({H_{[s]}'}^{\top}\Dtheta_{[s]})_i}$ (the largest positive value) and $i_1$ is the index such that $i_1 = \arg\max_{i\in I_-} \abs{({H_{[s]}'}^{\top}\Dtheta_{[s]})_i}$ (the largest negative value).
We have
\begin{align*}
    \lambdao \cdot q_{[s]}\wGamma_{[s]}^{-1} \Gamma_{[s]} \Dtheta_{[s]}
    & \geq 
    -\frac{\lambdao}{100\sqrt{s\sigma_{\max}(\Gamma)}}\sqrt{{\Dtheta_{[s]}}^\top\Gamma_{[s]}\Dtheta_{[s]}} \\
    & \geq
    -\frac{1}{100}\cdot \frac{1}{\sqrt{s}}\norm{({H'}^{-1})_{[s]} z_{[s]}}_2 \cdot \bigg( \abs{({H_{[s]}'}^{\top}\Dtheta_{[s]})_{i_0}} + \abs{({H_{[s]}'}^{\top}\Dtheta_{[s]})_{i_1}} \bigg).\numberthis\label{eq:i_0_lower}
\end{align*}

By comparing \eqref{eq:i_0_upper} and \eqref{eq:i_0_lower}, we have
\begin{align*}
    -\frac{1}{100}\bigg( \abs{({H_{[s]}'}^{\top}\Dtheta_{[s]})_{i_0}} + \abs{({H_{[s]}'}^{\top}\Dtheta_{[s]})_{i_1}} \bigg)
    & \leq
    \sum_{j\in I_+} \frac{51}{50}({H_{[s]}'}^{\top}\Dtheta_{[s]})_j + \sum_{j\in I_-} \frac{49}{50}({H_{[s]}'}^{\top}\Dtheta_{[s]})_j
\end{align*}
and it implies
\begin{align*}
    \sum_{j\in I_-} \abs{\frac{({H_{[s]}'}^{\top}\Dtheta_{[s]})_j}{({H_{[s]}'}^{\top}\Dtheta_{[s]})_{i_0}}}
    & <
    \frac{103}{97}s \numberthis \label{eq:htheta_ratio}
\end{align*}
where $i_0$ is the index such that $i_0 = \arg\max_i ({H_{[s]}'}^{\top}\Dtheta_{[s]})_i$.

Recall that $\Dtheta_i=0$ for $i\notin [s]$ and $(H'^{\top})_{r,c}=0$ for $r \notin[s]$ and $s\in [s]$.
Then, we have
\begin{align*}
    {H'}^{-1}\wGamma{H'}^{-\top}{H'}^\top\Dtheta
    & =
    ({H'}^{-1}\wGamma{H'}^{-\top})_{[p-1],[s]}H'^\top_{[s]}\theta^\Delta_{[s]}
\end{align*}
where $({H'}^{-1}\wGamma{H'}^{-\top})_{[p-1],[s]}$ is the $(p-1)$-by-$s$ submatrix of ${H'}^{-1}\wGamma{H'}^{-\top}$ whose row indices are in $[p-1]$ and column indices are in $[s]$.

Moreover, recall that
\begin{align*}
    \ww_i
    & =
    \frac{1}{n}\inner{\bX_p - \sum_{j=1}^{p-1}\theta^*_j\bX_j}{\bX_i}
    =
    \frac{1}{n}\bX^\top b
\end{align*}
and we can rewrite it as
\begin{align*}
    H^{-1}\ww
    & =
    \frac{1}{n}\sum_{j=1}^n b_jH^{-1}{\bX^{(j)}}^\top
\end{align*}
where $b$ is the $n$-dimensional vector $\bX_p - \sum_{j=1}^{p-1}\theta^*_j\bX_j$ and $\bX^{(i)}$ be the $i$-th row of $\bX$.
Note that $H^{-1}{\bX^{(i)}}^\top$ are distributed as $\mathcal{N}(\bzero,I)$.
Indeed, it is easy to check that 
\begin{align*}
    \E_{\bX^{(i)}\sim\mathcal{N}(\bzero,\Sigma)}\left(H^{-1}{\bX^{(i)}}^\top\bX^{(i)}H^{-\top}\right) 
    & = 
    I & \text{for any $i\in[n]$}. \numberthis\label{eq:independent}
\end{align*}

Recall that $\bX^{(i)}$ is the $i$-th row of $\bX$.
By \eqref{eq:independent}, $H'^{-1}{\bX^{(i)}}^\top$ are distributed as $\mathcal{N}(\bzero,I)$.
Consider the entries of $(H'^{-1}\wGamma H'^{-\top})_{r,c}$ for $r\in[p-1]$ and $c\in [s]$.
By Chernoff bound and union bound, for all $r\in [p-1]$ and $c\in [s]$, we have
\begin{align*}
    \abs{(H'^{-1}\widehat{\Gamma}H'^{-\top})_{r,c} - 1_{r=c}} < t \numberthis \label{eq:hgh_delta}
\end{align*}
with probability $1-O(spe^{-\Omega(t^2n)})$ for any $t>0$.
Here, $1_{r=c} = \begin{cases}
    1 & \text{if $r=c$} \\
    0 & \text{if $r\neq c$}.
\end{cases}$

Combining \eqref{eq:hgh_delta} and \eqref{eq:htheta_ratio}, there exists an index $i_0\in[s]$ such that, for all $i\notin[s]$, we have
\begin{align*}
    \MoveEqLeft ({H'}^{-1}\wGamma{H'}^{-\top}{H'}^\top\Dtheta)_{i_0} - \eta\cdot({H'}^{-1}\wGamma{H'}^{-\top}{H'}^\top\Dtheta)_i \\
    & =
    (({H'}^{-1}\wGamma{H'}^{-\top})_{[p-1],[s]}H'^\top_{[s]}\theta^\Delta_{[s]})_{i_0} - \eta\cdot(({H'}^{-1}\wGamma{H'}^{-\top})_{[p-1],[s]}H'^\top_{[s]}\theta^\Delta_{[s]})_i \\
    & =
    \sum_{j=1}^s ({H'}^{-1}\wGamma{H'}^{-\top})_{i_0,j}(H'^\top_{[s]}\theta^\Delta_{[s]})_j - \eta\cdot\sum_{j=1}^s ({H'}^{-1}\wGamma{H'}^{-\top})_{i,j}(H'^\top_{[s]}\theta^\Delta_{[s]})_j \\
    & =
    (H'^\top_{[s]}\theta^\Delta_{[s]})_{i_0}\bigg( \sum_{j=1}^s ({H'}^{-1}\wGamma{H'}^{-\top})_{i_0,j}\frac{(H'^\top_{[s]}\theta^\Delta_{[s]})_j}{(H'^\top_{[s]}\theta^\Delta_{[s]})_{i_0}} - \eta\cdot\sum_{j=1}^s ({H'}^{-1}\wGamma{H'}^{-\top})_{i,j}\frac{(H'^\top_{[s]}\theta^\Delta_{[s]})_j}{(H'^\top_{[s]}\theta^\Delta_{[s]})_{i_0}} \bigg) \\
    & \geq
    (H'^\top_{[s]}\theta^\Delta_{[s]})_{i_0} \bigg( (1-t) - (t(s-1) + t\frac{103}{97}s) - \eta \cdot (ts + t\frac{103}{97}s) \bigg) \\
    & \geq
    0 \numberthis\label{eq:i_0}
\end{align*}
with probability $1-O(spe^{-\Omega(\frac{n}{s^2})})$ if we pick $t=\frac{1}{1000s}$.

If we plug \eqref{eq:i_0} into \eqref{eq:main_ineq_1}, there exists an index $i_0\in[s]$ such that, for at least $\frac{p-1-s}{2}$ of $i\notin[s]$, we have 
\begin{align*}
    (H'^{-1}\ww)_{i_0}
    & \geq 
    \eta \cdot (H'^{-1}\ww)_{i}. \numberthis\label{eq:main_ineq_2}
\end{align*}
Recall that
\begin{align*}
    \ww_i
    & =
    \frac{1}{n}\inner{\bX_p - \sum_{j=1}^{p-1}\theta^*_j\bX_j}{\bX_i}
    \qquad \text{and} \qquad
    H'^{-1}\ww
    =
    \sum_{j=1}^n b_jH'^{-1}{\bX^{(j)}}^\top
\end{align*}
where $b$ is the $n$-dimensional vector $\bX_p - \sum_{j=1}^{p-1}\theta^*_j\bX_j$.
The entries of $b$ are distributed as $\mathcal{N}(0,a - v^\top\Gamma v)$ independently and independent to the entries of $H'^{-1}{\bX^{(j)}}^\top$ for $j\in [n]$.
If we further rewrite \eqref{eq:main_ineq_2} as
\begin{align*}
    \sum_{j=1}^n\frac{b_j}{\norm{b}_2}\frac{(H'^{-1}{\bX^{(j)}}^\top)_{i_0}}{\sqrt{n}}
    & \geq
    \eta \cdot\bigg(\sum_{j=1}^n\frac{b_j}{\norm{b}_2}\frac{(H'^{-1}{\bX^{(j)}}^\top)_{i}}{\sqrt{n}}\bigg).
\end{align*}
Hence, we can view the term $\frac{b}{\norm{b}_2}$ as a random projection and both side are just a Gaussian variable from $\mathcal{N}(0,1)$.
The probability of this event is
\begin{align*}
    \int_{-\infty}^{\infty} \left(\erf(\frac{x}{\eta})\right)^{\frac{p-1-s}{2}}\frac{1}{\sqrt{2\pi}}e^{-\frac{1}{2}x^2}\dir x
\end{align*}
where $\erf(*) = \int_{-\infty}^{*} \frac{1}{\sqrt{2\pi}} e^{-\frac{1}{2}x^2} \dir x$.
To bound this probability, we first see that 
\begin{align*}
    \erf(\frac{x}{\eta})
    & =
    \int_{-\infty}^{\frac{x}{\eta}} \frac{1}{\sqrt{2\pi}}e^{-\frac{1}{2}y^2} \dir y
    =
    1 - \int_{\frac{x}{\eta}}^{\infty} \frac{1}{\sqrt{2\pi}}e^{-\frac{1}{2}y^2} \dir y 
    <
    1 - \Omega(e^{-O((\frac{x}{\eta})^2)})
\end{align*}
Let $\xi$ be the value such that $\erf(\frac{\xi}{\eta}) < 1-\frac{1}{\sqrt{\frac{p-1-s}{2}}}$ which means $\xi = \Theta(\eta\sqrt{\log p})$.
Then, the probability can be further expressed as
\begin{align*}
    \MoveEqLeft\int_{-\infty}^{\infty} \left(\erf(\frac{x}{\eta})\right)^{\frac{p-1-s}{2}}\frac{1}{\sqrt{2\pi}}e^{-\frac{1}{2}x^2}\dir x \\
    & =
    \int_{-\infty}^{\xi} \left(\erf(\frac{x}{\eta})\right)^{\frac{p-1-s}{2}}\frac{1}{\sqrt{2\pi}}e^{-\frac{1}{2}x^2}\dir x + \int_{\xi}^{\infty} \left(\erf(\frac{x}{\eta})\right)^{\frac{p-1-s}{2}}\frac{1}{\sqrt{2\pi}}e^{-\frac{1}{2}x^2}\dir x.
\end{align*}
For the first term,
\begin{align*}
    \int_{-\infty}^{\xi} \left(\erf(\frac{x}{\eta})\right)^{\frac{p-1-s}{2}}\frac{1}{\sqrt{2\pi}}e^{-\frac{1}{2}x^2}\dir x
    & <
    \int_{-\infty}^{\xi} \left(1-\frac{1}{\sqrt{\frac{p-1-s}{2}}}\right)^{\frac{p-1-s}{2}}\frac{1}{\sqrt{2\pi}}e^{-\frac{1}{2}x^2}\dir x \\
    & <
    e^{-\frac{1}{\sqrt{\frac{p-1-s}{2}}}}.
\end{align*}
For the second term,
\begin{align*}
    \int_{\xi}^{\infty} \left(\erf(\frac{x}{\eta})\right)^{\frac{p-1-s}{2}}\frac{1}{\sqrt{2\pi}}e^{-\frac{1}{2}x^2}\dir x
    & <
    O(e^{-\Omega(\xi^2)})
    =
    O(p^{-\Omega(\eta^2)}).
\end{align*}
By combining these two terms and recalling that $\eta = \sqrt{\frac{p-1-s}{2(p-1)}\frac{1}{\kappa(\Gamma)}}$, we have
\begin{align*}
    \int_{-\infty}^{\infty} \left(\erf(\frac{x}{\eta})\right)^{\frac{p-1-s}{2}}\frac{1}{\sqrt{2\pi}}e^{-\frac{1}{2}x^2}\dir x
    & \leq
    e^{-\frac{1}{\sqrt{\frac{p-1-s}{2}}}} + O(p^{-\Omega(\eta^2)})
    \leq
    O(p^{\Omega(-\frac{1}{\kappa(\Gamma)})}). \numberthis\label{eq:key_prob}
\end{align*}

Finally, combining the failure probabilities in \eqref{eq:mcb_ineq_h}, \eqref{eq:i_0} and \eqref{eq:key_prob} and taking union bound over all $2^s$ choices of $H'$ for satisfying \eqref{eq:h_condition}, we have
\begin{align*}
    \mathsf{Pr}_{\bX\sim \mathcal{N}(\bzero,\Sigma)^{n}}(\wne^{\lambdao}=[s] \wedge \mathcal{C})
    & \leq
    O\bigg(2^s\cdot \bigg( p^{-\Omega(\frac{1}{\kappa(\Gamma)})} + spe^{-\Omega(\frac{n}{s^2})} + e^{-\Omega(\frac{n}{s\kappa(\Gamma)^3})} \bigg) \bigg)\\
    & \leq
    O\bigg(2^s\cdot \bigg( p^{-\Omega(\frac{1}{\kappa(\Gamma)})} + spe^{-\Omega(\frac{n}{s^2\kappa(\Gamma)^3})}\bigg) \bigg).
\end{align*}

\end{proof}

\subsection{Proof of Lemma \ref{lem:rays_out}}

\begin{lemma}\label{lem:rays_out}

Recall that 
\begin{itemize}
    \item the event $\mathcal{A}$ is ${\sign(Q)\theta'}^\top\Gamma(\wtheta^{\lambdao} - \theta^*) \geq 0$
    \item the event $\mathcal{B}$ is ${-\sign(Q) \theta''}^\top\Gamma(\wtheta^{\lambdao} - \theta^*) \geq 0$
\end{itemize}
where $Q$, $\theta'$ and $\theta''$ are defined in \eqref{eq:q_def}, \eqref{eq:theta_prime} and \eqref{eq:theta_dprime} respectively.
Then, we have
\begin{align*}
    \mathsf{Pr}_{\bX\sim \mathcal{N}(\bzero,\Sigma)^{n}}(\wne^{\lambdao} = [s] \wedge \mathcal{A}\wedge\mathcal{B})
    & <
    O\bigg(2^s\cdot \bigg( p^{-\Omega(\frac{1}{\kappa(\Gamma)})} + spe^{-\Omega(\frac{n}{s^2\kappa(\Gamma)^6})}\bigg) \bigg).
\end{align*}
    
\end{lemma}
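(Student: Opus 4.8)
The plan is to reduce the event $\mathcal{A}\wedge\mathcal{B}$ --- that both rays $R_1,R_2$ shoot out of $\mathcal{E}$ --- to the ``almost-tangent'' event $\mathcal{C}$ appearing in Lemma~\ref{lem:almost_tangent}, up to a negligible error event, and then apply that lemma directly. First, on $\{\wne^{\lambdao}=[s]\}$ (recalling $\ne^*=[s]$, so that $\Dtheta := \wtheta^{\lambdao}-\theta^*$ is supported on $[s]$), and using that $\theta'$ is supported on $[s]$ while $\theta''$ is supported on $[s+1]$, one has ${\theta'}^\top\Gamma\Dtheta = -a$ and ${\theta''}^\top\Gamma\Dtheta = -b$, where $a := q_{[s]}^\top\wGamma_{[s]}^{-1}\Gamma_{[s]}\Dtheta_{[s]}$ and $b := q_{[s+1]}^\top\wGamma_{[s+1]}^{-1}\Gamma_{[s+1],[s]}\Dtheta_{[s]}$. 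In these terms $\mathcal{A}$ reads $\sign(Q)\,a\le 0$ and $\mathcal{B}$ reads $\sign(Q)\,b\ge 0$, so $\sign(Q)a$ and $\sign(Q)b$ straddle $0$.

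Next I would use the block-matrix inversion of $\wGamma_{[s+1]}$ --- the same computation already carried out in the proof of Lemma~\ref{lem:rays_lasso} --- to get $\theta''_{[s]} = \theta'_{[s]} - \theta''_{s+1}\,\wGamma_{[s]}^{-1}\wu$, where $\wu$ is the cross block $\wGamma_{[s],s+1}$, $\theta''_{s+1} = -q_{s+1}Q/d$, and $d := \wGamma_{s+1,s+1}-\wu^\top\wGamma_{[s]}^{-1}\wu>0$. Substituting this into $b$ yields the identity $b = a + \theta''_{s+1}\,z$ with $z := \wu^\top\wGamma_{[s]}^{-1}\Gamma_{[s]}\Dtheta_{[s]} - \Gamma_{s+1,[s]}\Dtheta_{[s]}$. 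Since $\sign(Q)a\le 0\le\sign(Q)b = \sign(Q)a+\sign(Q)\theta''_{s+1}z$, the triangle inequality gives $|a|\le|\sign(Q)b|+|\sign(Q)a| = |\theta''_{s+1}z| = \tfrac{|Q|}{d}\,|z|$ on $\mathcal{A}\wedge\mathcal{B}$ (the case $Q=0$ forces $\theta''_{s+1}=0$, hence $b=a$ and $a=0$ directly).

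The heart of the argument is then to show $\tfrac{|Q|}{d}|z|$ is small with overwhelming probability. Writing $z = \wu^\top(\wGamma_{[s]}^{-1}\Gamma_{[s]}-I)\Dtheta_{[s]} + (\wu-\Gamma_{[s],s+1})^\top\Dtheta_{[s]}$ exhibits $z$ as the discrepancy between the sample and population conditional cross-covariances, which vanishes as $\wGamma\to\Gamma$. By the matrix Chernoff bound, $\|\wGamma_{[s+1]}-\Gamma_{[s+1]}\|_2\le t\,\svmax(\Gamma)$ with probability $1-e^{-\Omega(t^2n)}$; on this event, for $t$ sufficiently small, Weyl's inequality gives $d\ge\tfrac12\svmin(\Gamma)$ and $\|\wGamma_{[s]}^{-1}\|_2\le 2/\svmin(\Gamma)$, a crude estimate gives $|Q|\le O(\sqrt{s}\,\kappa(\Gamma))$, and the displayed decomposition of $z$ gives $|z|\le O(t\,\kappa(\Gamma)\,\svmax(\Gamma))\,\|\Dtheta_{[s]}\|_2$. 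Combining these with $\sqrt{\Dtheta_{[s]}^\top\Gamma_{[s]}\Dtheta_{[s]}}\ge\sqrt{\svmin(\Gamma)}\,\|\Dtheta_{[s]}\|_2$ and choosing $t$ to be an appropriate inverse polynomial in $s$ and $\kappa(\Gamma)$ makes $\tfrac{|Q|}{d}|z|\le\frac{1}{100\sqrt{s\,\svmax(\Gamma)}}\sqrt{\Dtheta_{[s]}^\top\Gamma_{[s]}\Dtheta_{[s]}}$, which is exactly the event $\mathcal{C}$.

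Putting the pieces together, $\{\wne^{\lambdao}=[s]\}\cap\mathcal{A}\cap\mathcal{B}\subseteq(\{\wne^{\lambdao}=[s]\}\cap\mathcal{C})\cup\{\text{the Chernoff bound fails}\}$, so $\mathsf{Pr}(\wne^{\lambdao}=[s]\wedge\mathcal{A}\wedge\mathcal{B})\le\mathsf{Pr}(\wne^{\lambdao}=[s]\wedge\mathcal{C})+e^{-\Omega(n/(s^2\kappa(\Gamma)^{O(1)}))}$; Lemma~\ref{lem:almost_tangent} controls the first term, and after tracking the powers of $\kappa(\Gamma)$ the Chernoff term is absorbed, giving the stated bound (the extra powers of $\kappa(\Gamma)$, ending at $\kappa(\Gamma)^6$ rather than $\kappa(\Gamma)^3$, are precisely the price of needing $\wGamma$ close enough to $\Gamma$ in this step). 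I expect this third step --- making the control on $|z|$, $d$, and $|Q|$ sharp enough that the straddling inequality lands inside event $\mathcal{C}$ --- to be the main obstacle, since it is what forces the smaller $t$ and hence the worse exponential rate.
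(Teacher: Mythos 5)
Your proposal takes essentially the same route as the paper's own proof: your block-inversion identity $b=a+\theta''_{s+1}z$ is exactly the paper's expansion of $-\sign(Q){\theta''}^\top\Gamma\Dtheta$ leading to \eqref{eq:b_almost_tangent_2}, your straddling argument on $\mathcal{A}\wedge\mathcal{B}$ gives the same inequality $\abs{q_{[s]}^\top\wGamma_{[s]}^{-1}\Gamma_{[s]}\Dtheta_{[s]}}\leq\abs{\theta''_{s+1}z}$, and the matrix-Chernoff control of $\abs{Q}/d$ and $\abs{z}$ followed by an application of Lemma~\ref{lem:almost_tangent} is precisely the paper's reduction to the event $\mathcal{C}$. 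The only (minor) difference is bookkeeping: the paper bounds the correction term against $\norm{{H'_{[s]}}^\top\Dtheta_{[s]}}_2=\sqrt{\Dtheta_{[s]}^\top\Gamma_{[s]}\Dtheta_{[s]}}$ directly, allowing $t\asymp 1/(\sqrt{s}\,\kappa(\Gamma)^3)$ and hence the stated $e^{-\Omega(n/(s^2\kappa(\Gamma)^6))}$ term, whereas your detour through $\norm{\Dtheta_{[s]}}_2$ and $\sqrt{\svmin(\Gamma)}$ forces a slightly smaller $t$ and thus a marginally larger power of $\kappa(\Gamma)$ in the exponent, which you would need to tighten (as you anticipate) to recover the exact stated rate.
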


\begin{proof}

Recall that  $\Dtheta=\wtheta^{\lambdao} - \theta^*$.
We first expand $\theta''^\top \Gamma\Dtheta$.
By the fact $\Dtheta_i=0$ for $i\notin[s]$ from $\ne^*=\wne^{\lambdao}=[s]$ and the definition of $\theta''$ in \eqref{eq:theta_dprime}, we have
\begin{align*}
    \theta''^\top\Gamma\Dtheta
    & =
    -q_{[s+1]}^\top\wGamma_{[s+1]}^{-1}\Gamma_{[s+1],[s]}\Dtheta_{[s]}
\end{align*}
where $\Gamma_{[s+1],[s]}$ is the $(s+1)$-by-$s$ sub-matrix of $\Gamma$ whose row indices are in $[s+1]$ and column indices are in $[s]$.
By direct block matrix calculation, we have
\begin{align*}
    \MoveEqLeft q_{[s+1]}^\top\wGamma_{[s+1]}^{-1}\Gamma_{[s+1],[s]} \\
    & =
    q_{[s]}^\top\wGamma_{[s]}^{-1}\Gamma_{[s]} + \frac{q_{[s]}^\top\wGamma_{[s]}^{-1}\wu}{\wGamma_{s+1} - \wu^\top\wGamma_{[s]}^{-1}\wu}\wu^\top\wGamma_{[s]}^{-1}\Gamma_{[s]} - \frac{q_{s+1}}{\wGamma_{s+1} - \wu^\top\wGamma_{[s]}^{-1}\wu}\wu^\top\wGamma_{[s]}^{-1}\Gamma_{[s]} \\
    & \qquad -\frac{q_{[s]}^\top\wGamma_{[s]}^{-1}\wu}{\wGamma_{s+1} - \wu^\top\wGamma_{[s]}^{-1}\wu}u^\top + \frac{q_{s+1}}{\wGamma_{s+1} - \wu^\top\wGamma_{[s]}^{-1}\wu}u^\top \\
    & =
    q_{[s]}^\top\wGamma_{[s]}^{-1}\Gamma_{[s]} - \frac{q_{[s]}^\top\wGamma_{[s]}^{-1}\wu - q_{s+1}}{\wGamma_{s+1} - \wu^\top\wGamma_{[s]}^{-1}\wu}(u^\top - \wu^\top\wGamma_{[s]}^{-1}\Gamma_{[s]})
\end{align*}
where $\wu$ (resp. $u$) is the $s$-dimensional vector whose $i$-th entry is $\wGamma_{i,s+1}$ (resp. $\Gamma_{i,s+1}$) for $i\in[s]$.
In other words, we have
\begin{align*}
    -\sign(Q)\theta''^\top \Gamma \Dtheta
    & =
    \sign(Q)q_{[s]}^\top\wGamma_{[s]}^{-1}\Gamma_{[s]}\theta_{[s]}^{\Delta} - \sign(Q)\frac{q_{[s]}^\top\wGamma_{[s]}^{-1}\wu - q_{s+1}}{\wGamma_{s+1} - \wu^\top\wGamma_{[s]}^{-1}\wu}(u^\top - \wu^\top\wGamma_{[s]}^{-1}\Gamma_{[s]})\theta_{[s]}^{\Delta}.\numberthis\label{eq:b_almost_tangent}
\end{align*}
By the fact $\Dtheta_i=0$ for $i\notin [s]$ from $\ne^*=\wne^{\lambdao} =[s]$ and the definition of $\theta'$ in \eqref{eq:theta_prime}, we have
\begin{align*}
    \theta'^\top\Gamma\Dtheta
    & =
    -q_{[s]}^\top\wGamma_{[s]}^{-1}\Gamma_{[s]}\Dtheta_{[s]}.
\end{align*}
We plug it into \eqref{eq:b_almost_tangent} and use the assumption that $-\sign(Q)\theta''^\top\Gamma\Dtheta\geq 0$.
Then, we have
\begin{align*}
    \sign(Q)\theta'^\top\Gamma\Dtheta
    & \leq
     - \sign(Q)\frac{q_{[s]}^\top\wGamma_{[s]}^{-1}\wu - q_{s+1}}{\wGamma_{s+1} - \wu^\top\wGamma_{[s]}^{-1}\wu}(u^\top - \wu^\top\wGamma_{[s]}^{-1}\Gamma_{[s]})\theta_{[s]}^{\Delta}. \numberthis \label{eq:b_almost_tangent_2}
\end{align*}
We now further bound the terms in the RHS.

By matrix Chernoff bound, we first have
\begin{align*}
    \norm{\wGamma_{[s+1]} - \Gamma_{[s+1]}}_2 < t\norm{\Gamma_{[s+1]}}_2 
\end{align*}
with probability $1-O(e^{-\Omega(t^2n)})$ for any $t>0$.
Here, $\Gamma_{[s+1]}$ (resp. $\wGamma_{[s+1]}$) is the $(s+1)$-by-$(s+1)$ submatrix of $\Gamma$ (resp. $\wGamma$) whose indices are in $[s+1]$.
By the similar argument as in \eqref{eq:weyl_ineq}, it implies
\begin{align*}
    \abs{\norm{\wGamma_{[s]}^{-1}}_2 - \norm{\Gamma_{[s]}^{-1}}_2}
    & \leq
    t\kappa(\Gamma) \norm{\wGamma_{[s]}^{-1}}_2, \\
    \abs{\norm{\wGamma_{[s+1]}^{-1}}_2 - \norm{\Gamma_{[s+1]}^{-1}}_2},
    & \leq
    t\kappa(\Gamma) \norm{\wGamma_{[s+1]}^{-1}}_2 \\
    \norm{\wGamma_{[s]}^{-1} - \Gamma_{[s]}^{-1}}_2
    & \leq
    t\kappa(\Gamma)\norm{\wGamma_{[s]}^{-1}}_2.
\end{align*}

For the term $q_{[s]}^\top\wGamma_{[s]}^{-1}\wu - q_{s+1}$, we have
\begin{align*}
    \abs{q_{[s]}^\top\wGamma_{[s]}^{-1}\wu - q_{s+1}}
    & \leq
    \norm{q_{[s]}}_2 \norm{\wGamma^{-1}_{[s]}}_2 \norm{\wu}_2 + 1
\end{align*}
Recall that the entries of $q_{[s]}$ has absolute values $1$ and hence $\norm{q_{[s]}}_2 = \sqrt{s}$.
The term $\norm{\wGamma^{-1}_{[s]}}_2$ is bounded by $\frac{1}{(1-t\kappa(\Gamma))\sigma_{\min}(\Gamma)}$ and the term $\norm{\wu}_s$ is bounded by $\norm{\wGamma_{[s+1]}}_2 \leq t\sigma_{\max}(\Gamma)$.
We have
\begin{align*}
    \abs{q_{[s]}^\top\wGamma_{[s]}^{-1}\wu - q_{s+1}}
    & \leq
    \sqrt{s}\cdot \frac{1}{(1-t\kappa(\Gamma))\sigma_{\min}(\Gamma)}\cdot t\sigma_{\max}(\Gamma) + 1
    =
    \frac{\sqrt{s}t\kappa(\Gamma)}{1-t\kappa(\Gamma)} + 1\numberthis\label{eq:b_tangent_part_1}
\end{align*}

For the term $\wGamma_{s+1} - \wu^\top\wGamma_{[s]}\wu$, we have
\begin{align*}
    \abs{\wGamma_{s+1} - \wu^\top\wGamma_{[s]}^{-1}\wu}
    & \geq
    \sigma_{\min}(\wGamma_{[s+1]}) 
    \geq
    (1-t\kappa(\Gamma))\sigma_{\min}(\Gamma).\numberthis\label{eq:b_tangent_part_2}
\end{align*}

For the term $(u^\top - \wu^\top\wGamma_{[s]}^{-1}\Gamma_{[s]})\theta_{[s]}^{\Delta}$, we further expand it as
\begin{align*}
    (u^\top - \wu^\top\wGamma_{[s]}^{-1}\Gamma_{[s]})\theta_{[s]}^{\Delta}
    & =
    u^\top (\Gamma_{[s]}^{-\top} - \wGamma_{[s]}^{-\top})H'_{[s]}{H'_{[s]}}^{\top}\Dtheta_{[s]} + (u^\top - \wu^\top)\wGamma_{[s]}^{-1}H'_{[s]}{H'_{[s]}}^{\top}\Dtheta_{[s]}
\end{align*}
where $H'$ is the matrix satisfying $\Gamma = H'{H'}^\top$ and \eqref{eq:h_condition} and $H'_{[s]}$ is its $s$-by-$s$ submatrix whose indices are in $[s]$.
For the first term, we have
\begin{align*}
    \abs{u^\top (\Gamma_{[s]}^{-\top} - \wGamma_{[s]}^{-\top})H'_{[s]}{H'_{[s]}}^{\top}\Dtheta_{[s]}}
    & \leq
    \norm{u}_2\cdot \norm{\Gamma_{[s]}^{-1} - \wGamma_{[s]}^{-1}}_2 \cdot \norm{H'_{[s]}}_2\cdot \norm{{H'_{[s]}}^{\top}\Dtheta_{[s]}}_2 \\
    & \leq
    \sigma_{\max}(\Gamma)\cdot \frac{t\kappa(\Gamma)}{\sigma_{\min}(\Gamma)} \cdot \sqrt{\sigma_{\max}(\Gamma)} \cdot \norm{{H'_{[s]}}^{\top}\Dtheta_{[s]}}_2 \\
    & =
    t\kappa(\Gamma)^2\sqrt{\sigma_{\max}(\Gamma)} \cdot \norm{{H'_{[s]}}^{\top}\Dtheta_{[s]}}_2.
\end{align*}
For the second term, we have
\begin{align*}
    \abs{(u^\top - \wu^\top)\wGamma_{[s]}^{-1}H'_{[s]}{H'_{[s]}}^{\top}\Dtheta_{[s]}}
    & \leq
    \norm{u - \wu}_2\cdot \norm{\wGamma_{[s]}^{-1}}_2 \cdot \norm{H'_{[s]}}_2\cdot \norm{{H'_{[s]}}^{\top}\Dtheta_{[s]}}_2 \\
    & \leq
    t\sigma_{\max}(\Gamma)\cdot \frac{1}{(1-t\kappa(\Gamma))\sigma_{\min}(\Gamma)} \cdot \sqrt{\sigma_{\max}(\Gamma)} \cdot \norm{{H'_{[s]}}^{\top}\Dtheta_{[s]}}_2 \\ 
    & =
    \frac{t\kappa(\Gamma)\sqrt{\sigma_{\max}(\Gamma)}}{1-t\kappa(\Gamma)} \cdot \norm{{H'_{[s]}}^{\top}\Dtheta_{[s]}}_2.
\end{align*}
It means that
\begin{align*}
    \abs{(u^\top - \wu^\top\wGamma_{[s]}^{-1}\Gamma_{[s]})\theta_{[s]}^{\Delta}}
    & \leq
    \bigg(\kappa(\Gamma) + \frac{1}{1-t\kappa(\Gamma)} \bigg)t\kappa(\Gamma)\sqrt{\sigma_{\max}(\Gamma)}\norm{{H'_{[s]}}^{\top}\Dtheta_{[s]}}_2 \numberthis\label{eq:b_tangent_part_3}
\end{align*}

Plugging \eqref{eq:b_tangent_part_1}, \eqref{eq:b_tangent_part_2} and \eqref{eq:b_tangent_part_3} into the RHS of \eqref{eq:b_almost_tangent_2}, we have
\begin{align*}
    \sign(Q)\theta'^\top\Gamma\Dtheta
    & \leq
    \abs{\frac{q_{[s]}^\top\wGamma_{[s]}^{-1}\wu - q_{s+1}}{\wGamma_{s+1} - \wu^\top\wGamma_{[s]}^{-1}\wu}(u^\top - \wu^\top\wGamma_{[s]}^{-1}\Gamma_{[s]})\theta_{[s]}^{\Delta} } \\
    & \leq
    \frac{\bigg(\frac{\sqrt{s}t\kappa(\Gamma)}{1-t\kappa(\Gamma)} + 1\bigg)\bigg(\kappa(\Gamma) + \frac{1}{1-t\kappa(\Gamma)} \bigg)t\kappa(\Gamma)\sqrt{\sigma_{\max}(\Gamma)}}{(1-t\kappa(\Gamma))\sigma_{\min}(\Gamma)}\norm{{H'_{[s]}}^{\top}\Dtheta_{[s]}}_2 \\
    & =
    \frac{\bigg(\frac{\sqrt{s}t\kappa(\Gamma)}{1-t\kappa(\Gamma)} + 1\bigg)\bigg(\kappa(\Gamma) + \frac{1}{1-t\kappa(\Gamma)} \bigg)t\kappa(\Gamma)^2}{(1-t\kappa(\Gamma))\sqrt{\sigma_{\max}(\Gamma)}}\norm{{H'_{[s]}}^{\top}\Dtheta_{[s]}}_2
\end{align*}

If we pick $t  = \frac{1}{1000\sqrt{s}\kappa(\Gamma)^3}= \Theta(\frac{1}{\sqrt{s}\kappa(\Gamma)^3})$, then we have $1-t\kappa(\Gamma) = 1-\frac{1}{1000\sqrt{s}\kappa(\Gamma)^2}\geq \frac{999}{1000}$, $\sqrt{s}t\kappa(\Gamma) = \frac{1}{1000\kappa(\Gamma)^2} \leq \frac{1}{1000}$ and $t\kappa(\Gamma)^2 = \frac{1}{1000\sqrt{s}\kappa(\Gamma)}$.
It means that we have
\begin{align*}
    \frac{\sqrt{s}t\kappa(\Gamma)}{1-t\kappa(\Gamma)} + 1
    & \leq
    \frac{\frac{1}{1000}}{\frac{999}{1000}} + 1
    =
    \frac{1000}{999} \\
    \kappa(\Gamma) + \frac{1}{1-t\kappa(\Gamma)} 
    & \leq
    \kappa(\Gamma) + \frac{1000}{999}
    \leq
    \frac{1999}{999}\kappa(\Gamma) \\
    \frac{t\kappa(\Gamma)^2}{1-t\kappa(\Gamma)}
    & \leq
    \frac{\frac{1}{1000\sqrt{s}\kappa(\Gamma)}}{\frac{999}{1000}}
    =
    \frac{1}{999\sqrt{s}\kappa(\Gamma)}
\end{align*}
which implies 
\begin{align*}
    \sign(Q)\theta'^\top\Gamma\Dtheta
    & \leq
    \frac{1}{100\sqrt{s\sigma_{\max}(\Gamma)}}\norm{{H'_{[s]}}^{\top}\Dtheta_{[s]}}_2
    =
    \frac{1}{100\sqrt{s\sigma_{\max}(\Gamma)}}\sqrt{{\Dtheta_{[s]}}^\top\Gamma_{[s]}\Dtheta_{[s]}} \numberthis\label{eq:almost_tangent_cond}
\end{align*}
with probability $1-O(e^{-\Omega(\frac{n}{s\kappa(\Gamma)^6})})$.

From the event of $\sign(Q)\theta'^\top\Gamma\Dtheta \geq 0$, it implies $\sign(Q)\theta'^\top\Gamma\Dtheta = \abs{\theta'^\top\Gamma\Dtheta}$ and we have
\begin{align*}
    \abs{\theta'^\top\Gamma\Dtheta}
    & \leq
    \frac{1}{100\sqrt{s\sigma_{\max}(\Gamma)}}\sqrt{{\Dtheta_{[s]}}^\top\Gamma_{[s]}\Dtheta_{[s]}}. \numberthis\label{eq:almost_tangent_prob}
\end{align*}
By Lemma \ref{lem:almost_tangent}, the probability of \eqref{eq:almost_tangent_prob} is less than 
\begin{align*}
    O\bigg(2^s\cdot \bigg( p^{-\Omega(\frac{1}{\kappa(\Gamma)})} + spe^{-\Omega(\frac{n}{s^2\kappa(\Gamma)^3})}\bigg) \bigg)
\end{align*}
Combining the failure probability of \eqref{eq:almost_tangent_cond}, the probability of $\wne^{\lambdao} = [s] \wedge \mathcal{A}\wedge\mathcal{B}$ is bounded by 
\begin{align*}
    O\bigg(2^s\cdot \bigg( p^{-\Omega(\frac{1}{\kappa(\Gamma)})} + spe^{-\Omega(\frac{n}{s^2\kappa(\Gamma)^3})} + e^{-\Omega(\frac{n}{s\kappa(\Gamma)^6})}\bigg) \bigg)
    & <
    O\bigg(2^s\cdot \bigg( p^{-\Omega(\frac{1}{\kappa(\Gamma)})} + spe^{-\Omega(\frac{n}{s^2\kappa(\Gamma)^6})}\bigg) \bigg).
\end{align*}

\end{proof}

\section{Experiment details}\label{Appendix Experiement}

\subsection{Set-up}\label{Expt setup}
\paragraph{Computing}
All experiments were conducted on an 8-core Intel Xeon processor E5-2680v4 with
2.40 GHz frequency, and 16GB of memory. All experiments were set three hours wall time limit. Exceeding time limit or undefined FDR value for all zero estimates are marked as NULL for data recording and as missing points for plotting.

\paragraph{Graph models}
We include four common graphs: the Band graph, Scale-Free (SF), Erdös-Rényi (ER) and K-Nearest Neighbor (KNN) graphs for GGM simulation. Details for non-Gaussian simulations refer to~\ref{Non-Gaussian simu}.
The Band and SF graphs were generated via flare~\cite{flare} package, and the rest ER and KNN were implemented via i-graph~\cite{igraph} and mstknnclust~\cite{knn} in R. Specifically, graphs are initialized as following
\begin{itemize}
    \item Band graphs: These are single chain graph given the number of observations $n$ and the number of variables $p$, and $u,v,g$ are set as default in the r-flare package.
    \item Scale-free (SF) graphs: These are generated by Barabási–Albert model; The graph is initialized with two connected nodes, and the probability of a new node connecting to one existing node is proportional to the degree of the existing node.
    \item Erdös-Rény (ER) graphs: These are random undirected graph with $n$ number of nodes, and $p-1$ number of edges, which are selected uniformly and randomly from the set of possible edges. 
    \item K-nearest neighbor (KNN) graphs: There are random graphs where nodes are connected only if they are one of the $k$-nearest neighbors based on corresponding distance between them. A uniformly distributed $p\times p$ matrix is generated as some random data to calculate euclidean distances between nodes.
\end{itemize}

\paragraph{Methods}
For each graph model, each of the following method was implemented provided a penalty parameter.
\begin{itemize}
    \item Neighbourhood selection (NS): The neighbourhood selection method in~\cite{meinshausen2006} was implemented, and code is available at:~\url{https://anonymous.link}.
    \item Graphical Lasso (Glasso) in~\cite{friedman2008sparse} was implemented based on the glasso R package~\cite{glasso}, and code is available at:~\url{https://github.com/cran/glasso}. 
    \item Constrained $\ell_1$-minimization for inverse matrix estimation (CLIME) in ~\cite{cai2011constrained} was implemented based on the flare R package~\cite{flare}, and mirror code is available at:~\url{https://github.com/cran/clime}.
    \item Tuning-Insensitive Graph Estimation and Regression (TIGER) in ~\cite{liu2017tiger} was implemented based on flare R package~\cite{flare}, and mirror code is available at:~\url{https://github.com/cran/tiger}.
\end{itemize}

\paragraph{Metrics}
For each graph model, we evaluate the performance of each method with the penalty parameter selected by the following criteria: Akaike information criterion \citep[AIC,][]{akaike1974new}, Bayesian information criterion \citep[BIC,][]{schwarz1978estimating}, extended Bayesian information criterion \citep[EBIC,][]{foygel2010extended}, and 5-fold cross-validation (CV). The performance was measured by the following metrics:
\begin{itemize}
    \item Structured Hamming distance (SHD): The number of edge insertions, deletions or flips (in directed graph) that is needed to transform the estimated graph to the true graph.
    \item True Positive Rate (TPR):The proportion of correctly identified edges to the total number of edges in the true graph.
    \item False Discovery Rate (FDR): The proportion of incorrectly identified edges to the total number of edges in the estimated graph.
\end{itemize}

\paragraph{Remarks}
Experiment jobs were auto-written given configs (the graph type, the method, $n$ and $p$) and submitted via slurm. Experiments and analysis were conducted using R (\cite{R}).

\subsection{Non-Gaussian simulation}\label{Non-Gaussian simu}
\begin{figure*}[htbp]
  \centering
  \includegraphics[width=\textwidth]{./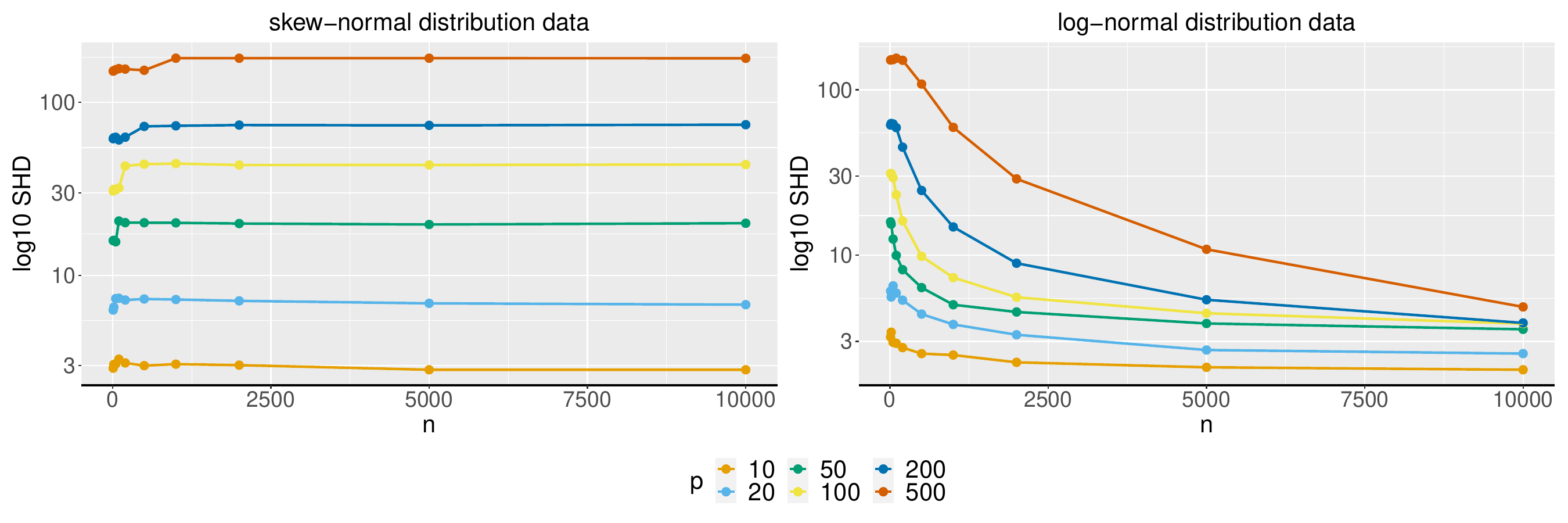}
  \caption{Log-SHD vs. sample size $n$ and the number of variables $p$ on Skew-normal and log-normal data via CV. The bottom black line denotes zero SHD, which is never attained.}
  \label{fig: fig3 non Gaussian exp}
\end{figure*}

Although our theoretical results are specific to the Gaussian setting, we can also demonstrate the fallibility of CV for non-Gaussian data.
Specifically, we randomly generate $n$ i.i.d samples $\mathbf{X}\in\mathbb{R}^{n\times p}$ from skew-normal and log-normal distributions via sn~\cite{sknormal} and MASS packages~\cite{ non-gauss} in R, respectively. The true coefficients $\theta_i\sim \text{Unif}([-2,-1]\cup[1,2])$ if it's in the neighborhood, otherwise, we set it to zero. The response variable is set by $\mathbf{Y} = \mathbf{X}\theta +\eps$, with $\eps\sim N(0,0.1)$. We implement glmnet~\cite{glmnet} package for R to test with $5$-fold CV to select the penalty parameter and obtain the estimates. We repeat this 100 times, and take the average SHD for each $n$ and $p$.
Although Figure~\ref{fig: fig3 non Gaussian exp} shows a decreasing trend in SHD with increasing $n$ for all $p$, CV plateaus and never achieves perfect selection (highlighted with the black line for zero SHD). Even when $p=10$ and $n=10000$, CV fails to correctly select all neighbors. This once again indicates that CV is suboptimal for structure learning.

\subsection{GGM simulation}
For each graph type, we construct the covariance and precision matrices $\Sigma^*$ and $K^*$, and simulate data generated from $N(0,\Sigma^*)$. We choose the largest penalty parameter $\lambda_{\mathrm{max}}$ (the smallest value which will result in a null, no-edge model) and the smallest penalty parameter $\lambda_{\mathrm{min}}$ (the largest value which will result in a model whose number of edge is less than $2\norm{K^*}_1$). $100$ penalty parameters $\lambda$ are chosen logarithmically evenly spaced between $\lambda_{\mathrm{max}}$ and $\lambda_{\mathrm{min}}$. At each penalty $\lambda$, an estimate $\widehat{K}_\lambda$ is computed via a given algorithm (NS, Glasso, CLIME or TIGER), and is used to model the edge set $E_{\lambda}$. We next compute the unpenalized maximum likelihood estimate with the same support as $E_{\lambda}$. We can compute all criteria and choose $\lambda$s corresponding to the lowest criteria among all $\lambda$s. This is repeated for 10 trials for each combination of $n,~p$, graph type and algorithms.

We perform $5$-fold CV with generated data. For each $\lambda$ and each training set and test set, we fit the model with training set using glasso, and evaluate the performance via samples in the test set by calculating the Gaussian log-likelihood. The penalty value with the maximum averaged Gaussian log-likelihood is selected as $\lambdacv$. We measure its performance via the average SHD, TPR and FDR to compare with other criteria.

\subsection{Additional results}
As noted in Corollary \ref{col:asym_thm}, the sparsity level $s$ also plays a role in determining the probability of correct recovery of the neighborhood. To emprically illustrate this point, we set $p=50,500$ and simulated $5000$ data from a Gaussian linear model. The sparsity $s$ is enforced by randomly setting $s$ out of $p$ coefficients to be exactly 0. The result shown in Figure~\ref{fig: SHD vs sparsity} indeed confirms our theoretical result: a larger $s$ implies a lower chance of recovery.
\begin{figure}
    \centering
    \includegraphics[width=0.7\textwidth]{./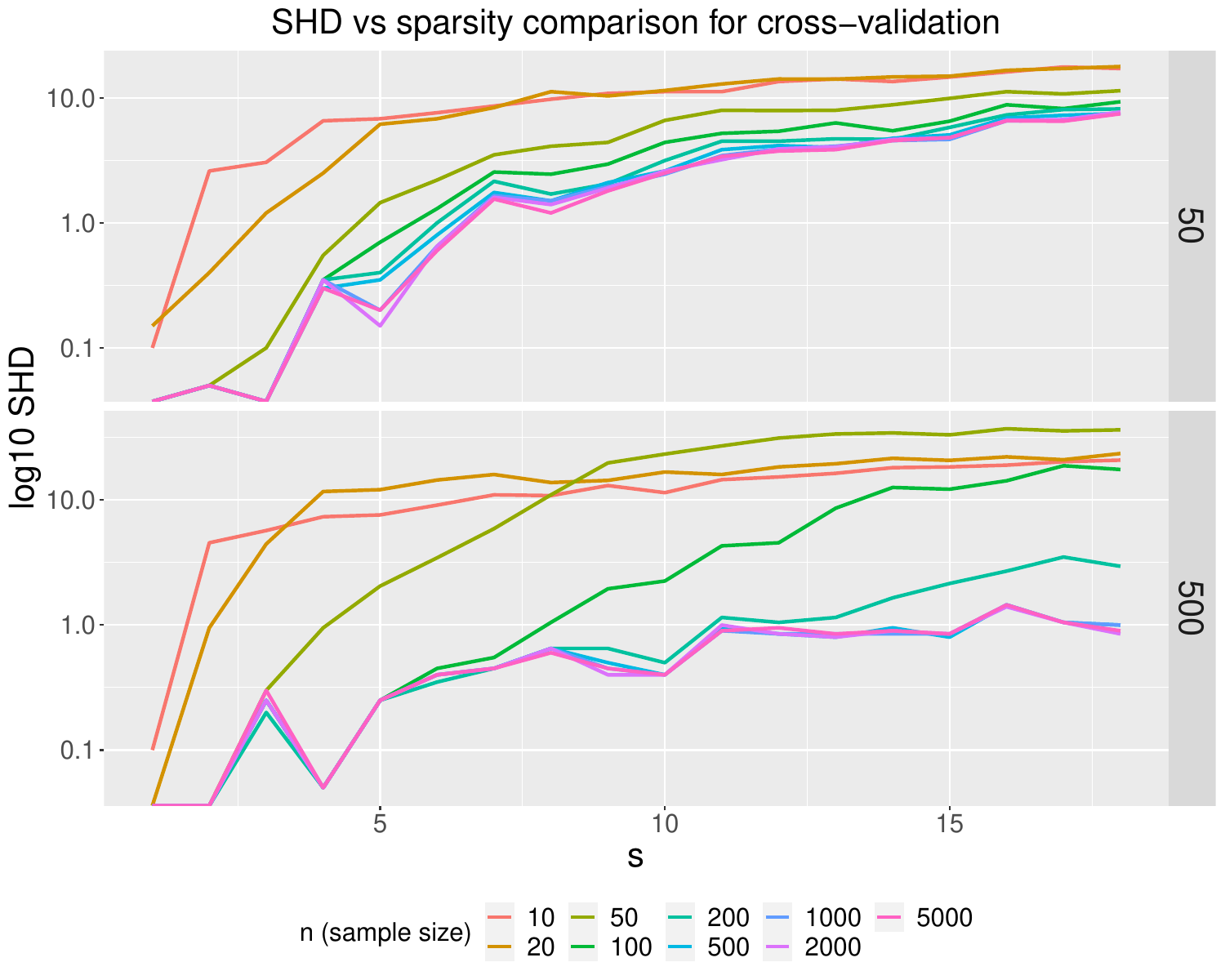}
    \caption{SHD vs sparsity comparison for CV for $p=50$ and $50$ respectively for varying $n$.}
    \label{fig: SHD vs sparsity}
\end{figure}

Below we provide remaining experimental results for 10 runs when utilizing NS, Glasso, CLIME, and TIGER; see Figures~\ref{fig: fig all in one NS},~\ref{fig: fig all in one Glasso},~\ref{fig: fig all in one CLIME},~\ref{fig: fig all in one TIGER} respectively. For each run, a random seed is set to generate varying datasets. We also provide detailed numerical results of FDR in Table~\ref{tab:NS CV table} and average SHD in Table~\ref{tab:NS CV table2} for NS method tuned by CV for the Band graph, which clearly suggests CV neither reaches $0\%$ FDR nor obtain fully correct graph (0 SHD).

To further investigate the behaviour of CV for large $n$, and to verify that it indeed fails to achieve exact recovery (i.e. zero average SHD), we provide additional experimental results with larger $n$ over 100 runs in Figures~\ref{fig: fig all in one NS 100 runs},~\ref{fig: fig all in one Glasso 100 runs},~\ref{fig: fig all in one CLIME 100 runs} and~\ref{fig: fig all in one TIGER 100 runs}. 

\begin{table*}[hb]
\centering
\begin{tabular}{lrrrrrrrr}
\toprule
 &  $n=10$   &  $n=20$   &  $n=50$   &  $n=100$  &  $n=200$  &  $n=500$  &  $n=800$  &  $n=1000$ \\
\midrule
$p=100$ &  0.587 &  0.369 &  0.0782 &  0.0329 &  0.0289 &  0.0158 &  0.00796 &  0.0119 \\
$p=200$ &  0.558 &  0.327 &  0.0738 &  0.0256 &  0.0148 &  0.0099 &  0.00792 &  0.0089 \\
$p=500$ &  0.674 &  0.333 &  0.0602 &  0.0176 &  0.0068 &  0.0052 &  0.00597 &  0.0051 \\
\bottomrule
\end{tabular}
\caption{Average FDR for the Band graph via NS method tuned by CV}
\label{tab:NS CV table}
\end{table*}

\begin{table*}[hb]
\centering
\begin{tabular}{lrrrrrrrr}
\toprule
 &  n=10   &  n=20   &  n=50   &  n=100  &  n=200  &  n=500  &  n=800  &  n=1000 \\
\midrule
p=100 &  103.2 &  81.8 &  20.2 &  3.4 &  3.0 &  1.6 &  0.8 &  1.2 \\
p=200 &  202.8 &  168.4 &  47.2 &  6.8 &  3.0 &  2.0 &  1.6 &  1.8 \\
p=500 &  510.0 &  454.2 &  151.4 &  19.2 &  3.4 &  2.6 &  3.0 &  2.6 \\
\bottomrule
\end{tabular}
\caption{Average SHD for the Band graph via NS method tuned by CV}
\label{tab:NS CV table2}
\end{table*}

\begin{figure*}[htbp]
  \centering \includegraphics[width=\textwidth]{./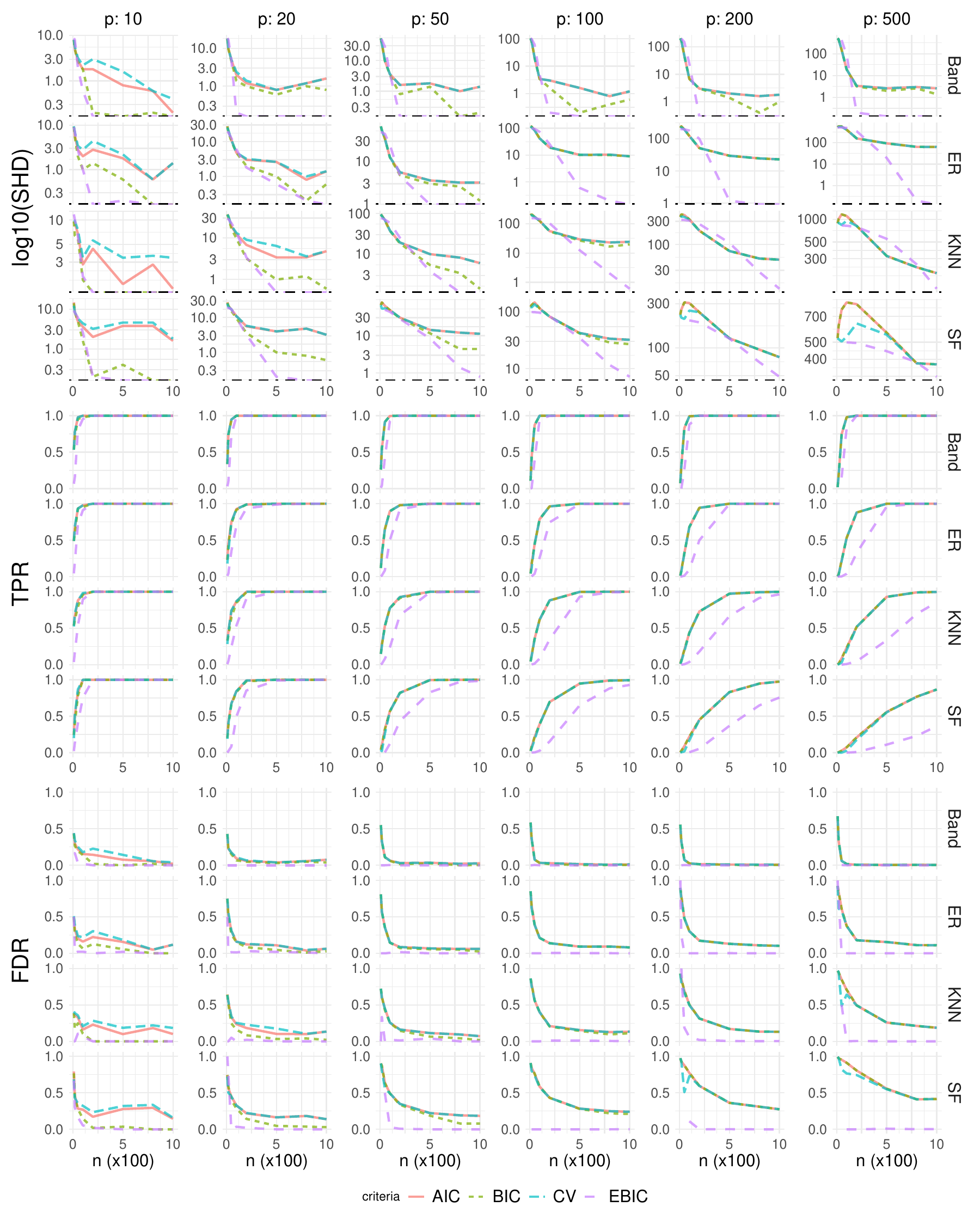}
  \caption{Log scale of SHD plots ($y$-axis values are not transformed into log-scale for direct comparison), TPR and FDR over varying sample size $n$ (in hundreds) and the number of variables $p$ on groups of graph types using Neighbourhood Selection (NS) to compare criteria. The dotdash line represents the $0$-SHD, i.e. perfect neighborhood selection. Wall time limit was set to three hours. Exceeding time limit or undefined FDR value for all zero estimates are marked as missing points for plotting.}
  \label{fig: fig all in one NS}
\end{figure*}
\begin{figure*}[htbp]
  \centering \includegraphics[width=\textwidth]{./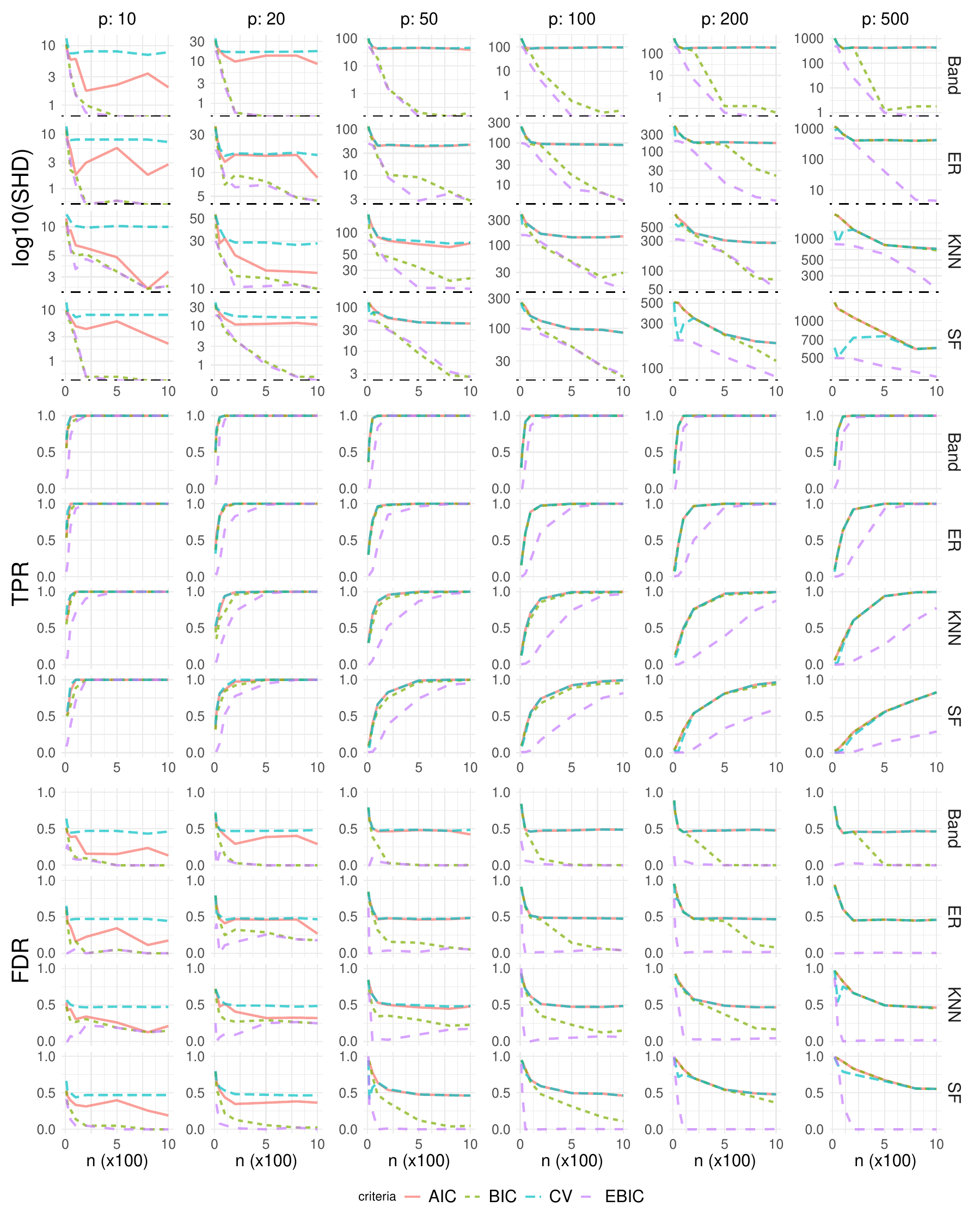}
  \caption{Log scale of SHD plots ($y$-axis values are not transformed into log-scale for direct comparison), TPR and FDR over varying sample size $n$ (in hundreds) and the number of variables $p$ on groups of graph types using Glasso to compare criteria. The dotdash line represents the $0$-SHD, i.e. perfect neighborhood selection. Wall time limit was set to three hours. Exceeding time limit or undefined FDR value for all zero estimates are marked as missing points for plotting.}
  \label{fig: fig all in one Glasso}
\end{figure*}

\begin{figure*}[htbp]
  \centering \includegraphics[width=\textwidth]{./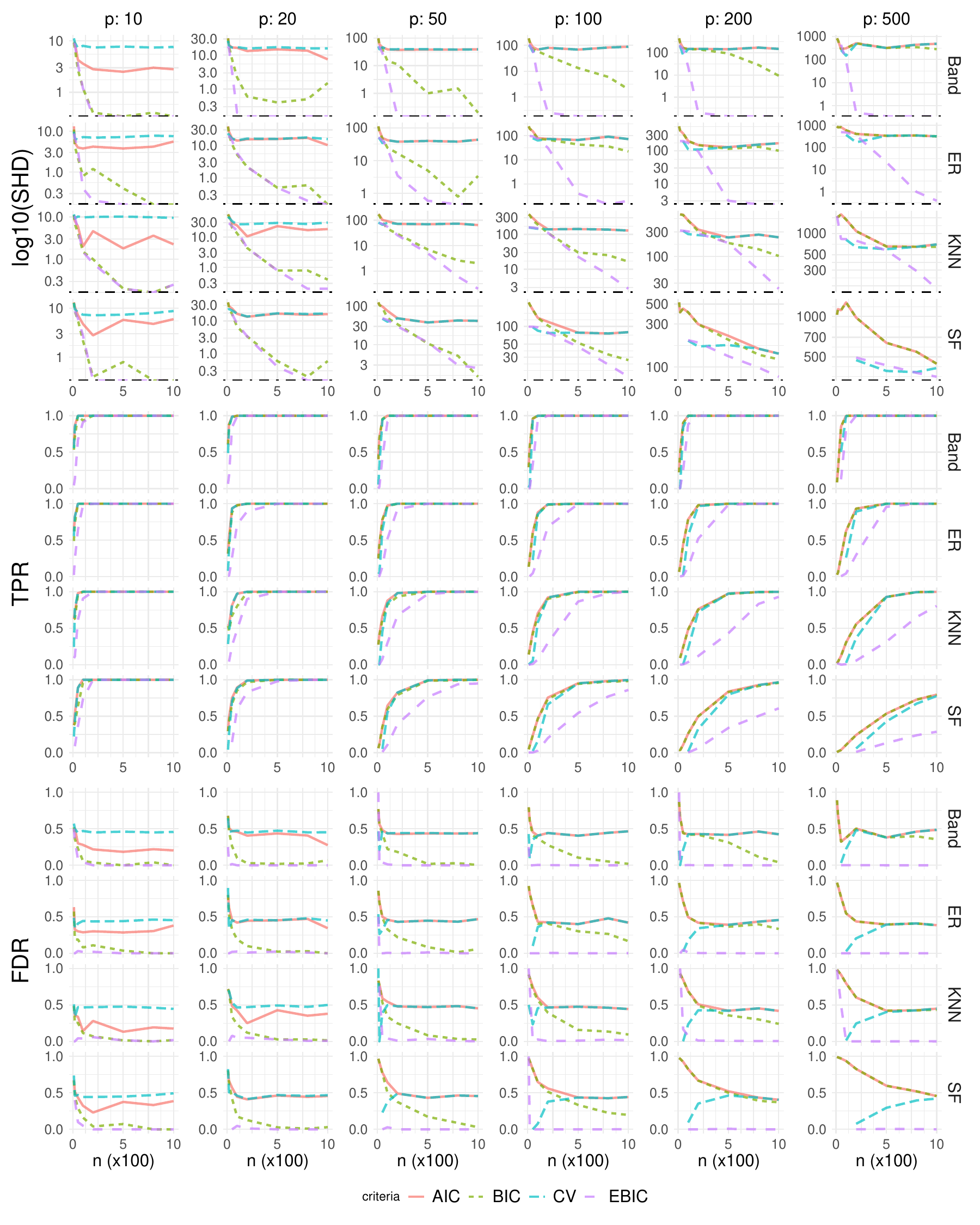}
  \caption{Log scale of SHD plots ($y$-axis values are not transformed into log-scale for direct comparison), TPR and FDR over varying sample size $n$ (in hundreds) and the number of variables $p$ on groups of graph types using CLIME to compare criteria. The dotdash line represents the $0$-SHD, i.e. perfect neighborhood selection. Wall time limit was set to three hours. Exceeding time limit or undefined FDR value for all zero estimates are marked as missing points for plotting.}
  \label{fig: fig all in one CLIME}
\end{figure*}

\begin{figure*}[htbp]
  \centering \includegraphics[width=\textwidth]{./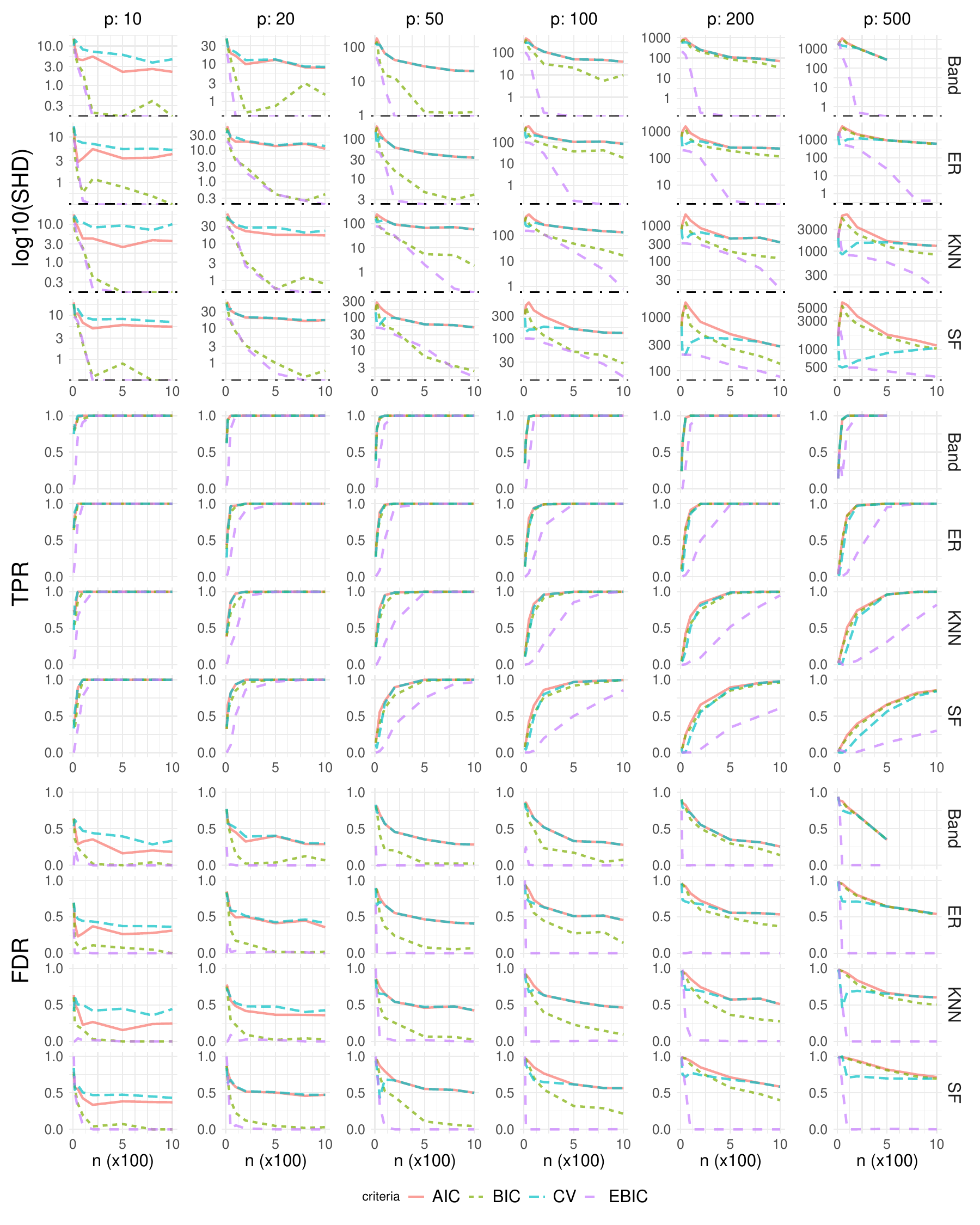}
  \caption{Log scale of SHD plots ($y$-axis values are not transformed into log-scale for direct comparison), TPR and FDR over varying sample size $n$ (in hundreds) and the number of variables $p$ on groups of graph types using TIGER to compare criteria. The dotdash line represents the $0$-SHD, i.e. perfect neighborhood selection. Wall time limit was set to three hours. Exceeding time limit or undefined FDR value for all zero estimates are marked as missing points for plotting.}
  \label{fig: fig all in one TIGER}
\end{figure*}

\begin{figure*}[htbp]
  \centering \includegraphics[width=\textwidth]{./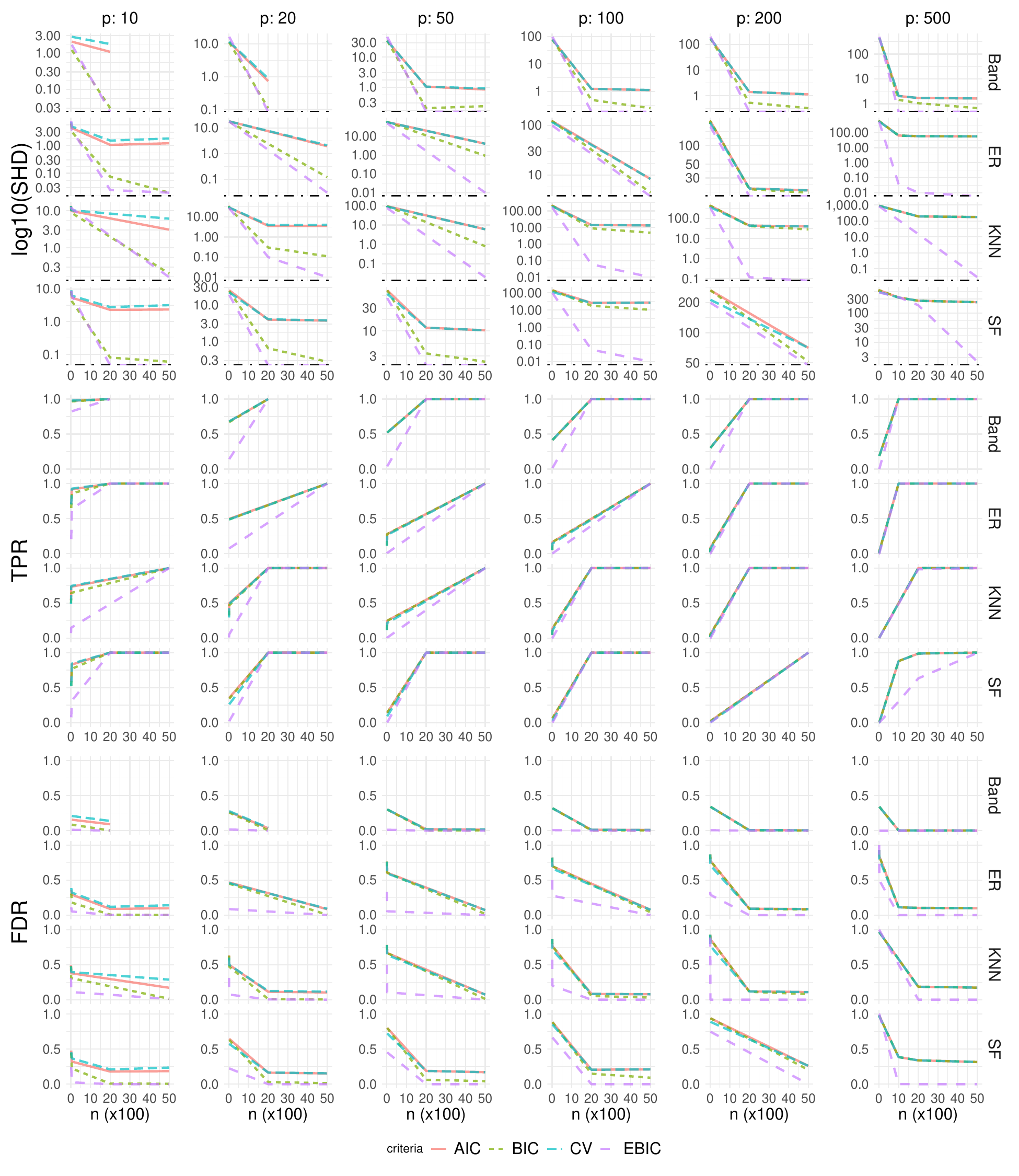}
  \caption{Log scale of SHD plots ($y$-axis values are not transformed into log-scale for direct comparison), TPR and FDR over varying sample size $n$ (in hundreds) and the number of variables $p$ on groups of graph types using Neighbourhood Selection (NS) to compare criteria on 100 runs. The dotdash line represents the $0$-SHD, i.e. perfect neighborhood selection. Wall time limit was set to three hours. Exceeding time limit or undefined FDR value for all zero estimates are marked as missing points for plotting.}
  \label{fig: fig all in one NS 100 runs}
\end{figure*}
\begin{figure*}[htbp]
  \centering \includegraphics[width=\textwidth]{./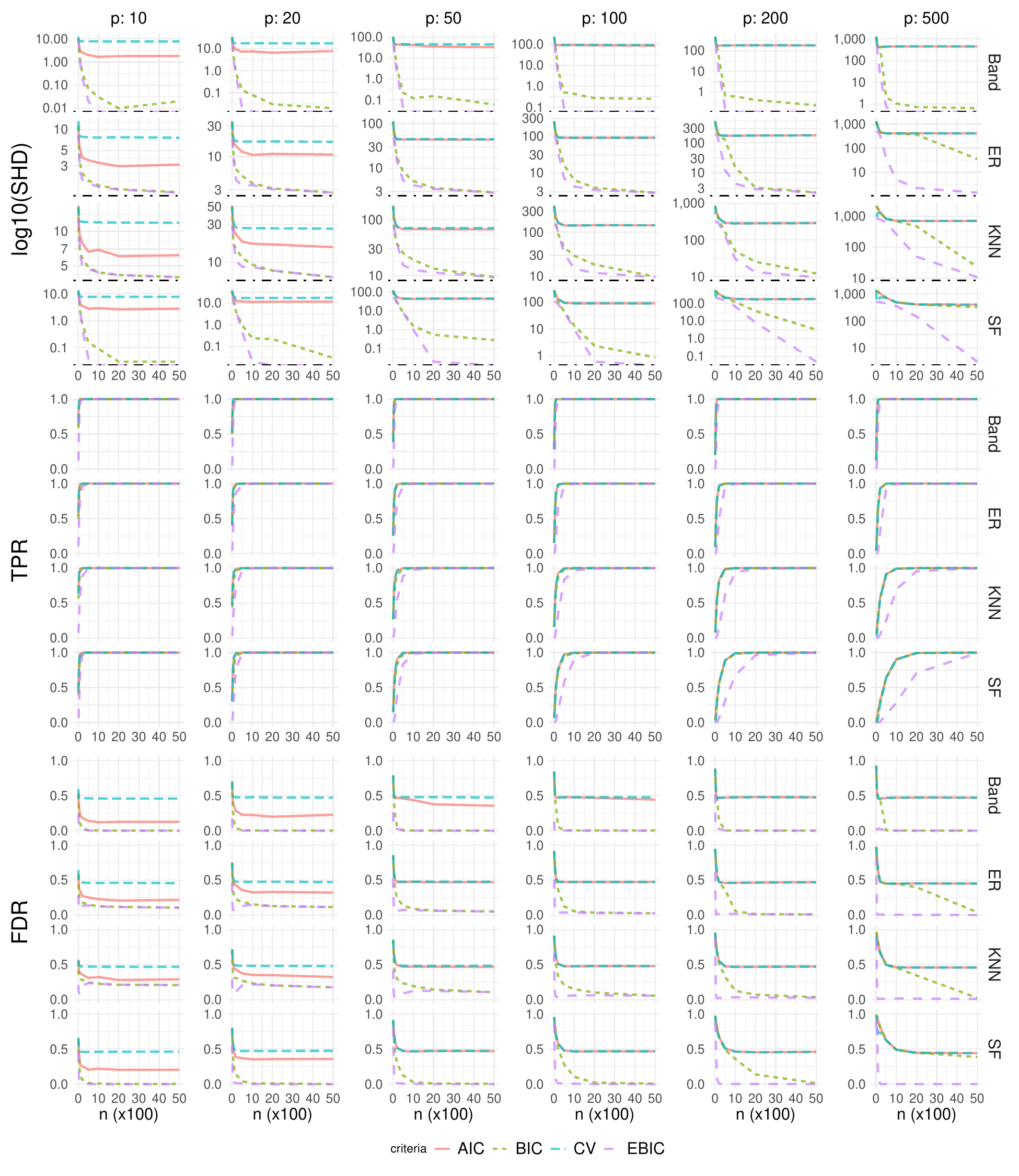}
  \caption{Log scale of SHD plots ($y$-axis values are not transformed into log-scale for direct comparison), TPR and FDR over varying sample size $n$ (in hundreds) and the number of variables $p$ on groups of graph types using Glasso to compare criteria on 100 runs. The dotdash line represents the $0$-SHD, i.e. perfect neighborhood selection. Wall time limit was set to three hours. Exceeding time limit or undefined FDR value for all zero estimates are marked as missing points for plotting.}
  \label{fig: fig all in one Glasso 100 runs}
\end{figure*}

\begin{figure*}[htbp]
  \centering \includegraphics[width=\textwidth]{./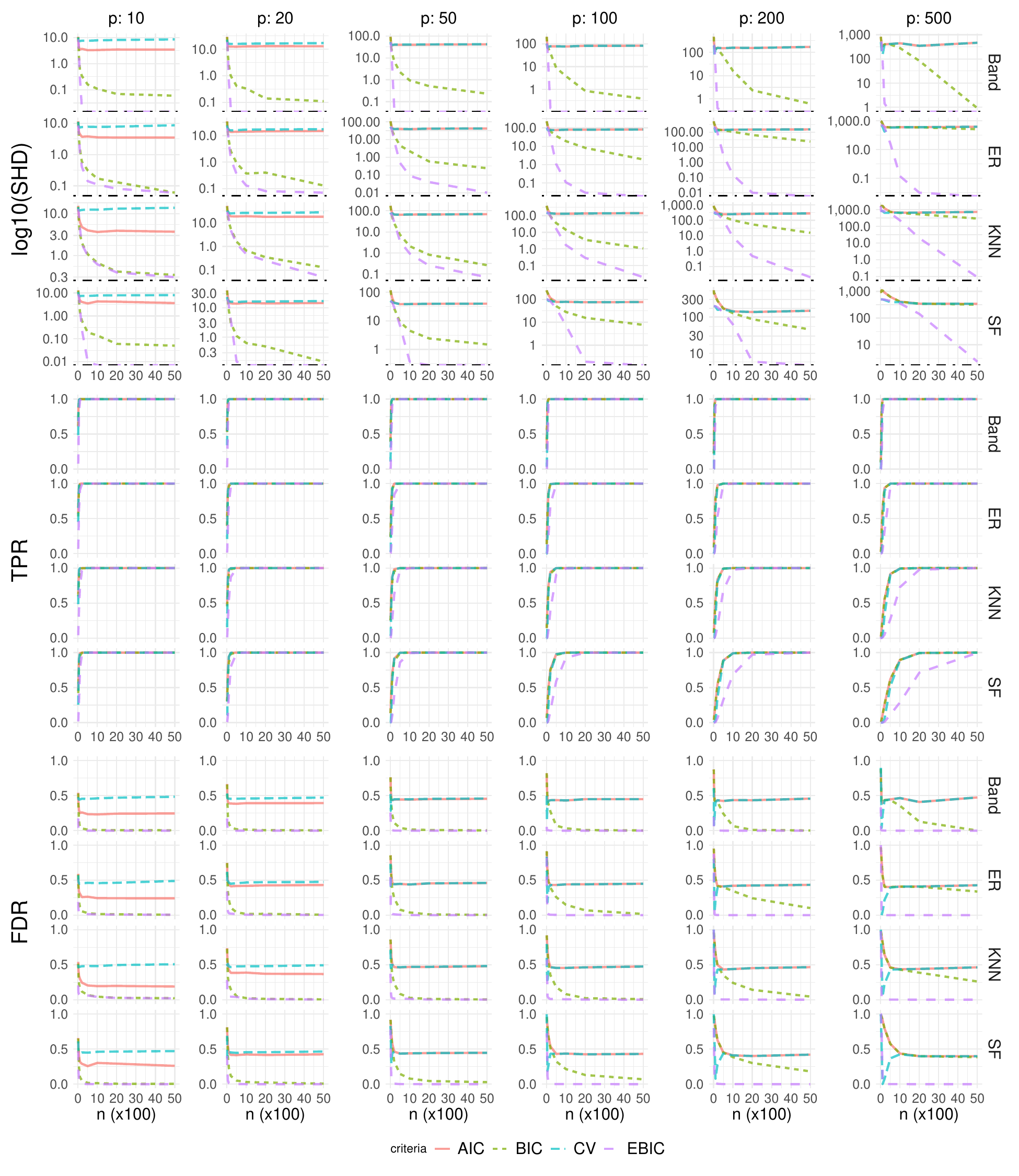}
  \caption{Log scale of SHD plots ($y$-axis values are not transformed into log-scale for direct comparison), TPR and FDR over varying sample size $n$ (in hundreds) and the number of variables $p$ on groups of graph types using CLIME to compare criteria on 100 runs. The dotdash line represents the $0$-SHD, i.e. perfect neighborhood selection. Wall time limit was set to three hours. Exceeding time limit or undefined FDR value for all zero estimates are marked as missing points for plotting.}
  \label{fig: fig all in one CLIME 100 runs}
\end{figure*}

\begin{figure*}[htbp]
  \centering \includegraphics[width=\textwidth]{./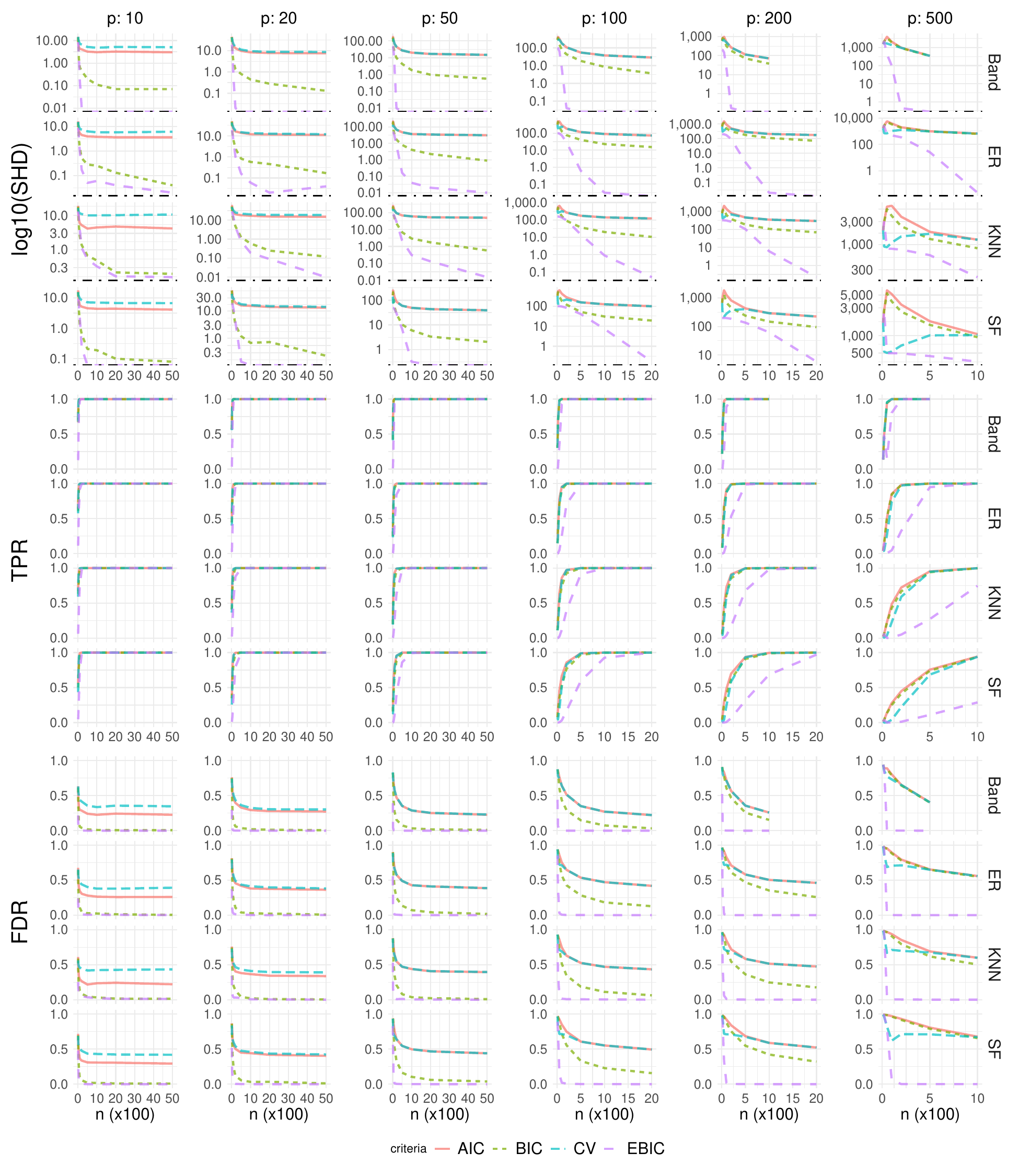}
  \caption{Log scale of SHD plots ($y$-axis values are not transformed into log-scale for direct comparison), TPR and FDR over varying sample size $n$ (in hundreds) and the number of variables $p$ on groups of graph types using TIGER to compare criteria on 100 runs. The dotdash line represents the $0$-SHD, i.e. perfect neighborhood selection. Wall time limit was set to three hours. Exceeding time limit or undefined FDR value for all zero estimates are marked as missing points for plotting.}
  \label{fig: fig all in one TIGER 100 runs}
\end{figure*}

\end{document}